\providecommand{\U}[1]{\protect\rule{.1in}{.1in}}
\newtheorem{theorem}{Theorem}[section]
\newtheorem{corollary}[theorem]{Corollary}
\newtheorem{definition}[theorem]{Definition}
\newtheorem{lemma}[theorem]{Lemma}
\newtheorem{remark}[theorem]{Remark}
\newcommand{\BIGOP}[1]{\mathop{\mathchoice{\raise-0.22em\hbox{\huge
        $#1$}} {\raise-0.05em\hbox{\Large $#1$}}{\hbox{\large
        $#1$}}{#1}}}
\makeatletter\@addtoreset{equation}{section}\makeatother
\oddsidemargin \addtolength{\textwidth}{43pt}
\newdimen\dummy \dummy=\oddsidemargin \addtolength{\dummy}{72pt}
\newcommand{\llbr}{[\![} \newcommand{\rrbr}{]\!]}
\begin{document}

\title{A Posteriori Error Estimation of $hp$-$\operatorname*{dG}$ Finite Element
Methods for Highly Indefinite Helmholtz Problems (extended version)\thanks{This paper is based on the
master's thesis \cite{ZechMaster}, which has been worked out during a visit of the
second author at the Institut f\"{u}r Mathematik, Universit\"{a}t Z\"{u}rich.}} 

\author{S. Sauter\thanks{Institut f\"{u}r Mathematik, Universit\"{a}t Z\"{u}rich,
Winterthurerstrasse 190, CH-8057 Z\"{u}rich, Switzerland, e-mail:\href{mailto:stas@math.uzh.ch}{stas@math.uzh.ch}}
\and J. Zech\thanks{Seminar for Applied Mathematics, ETH Z\"{u}rich, CH-8092
Z\"{u}rich, Switzerland, e-mail: \href{mailto:jakob.zech@sam.math.ethz.ch}{jakob.zech@sam.math.ethz.ch}}}

\date{}
\maketitle

\begin{abstract}
In this paper, we will consider an $hp$-finite elements discretization of a
highly indefinite Helmholtz problem by some $\operatorname{dG}$ formulation
which is based on the \textit{ultra-weak variational formulation} by Cessenat
and Depr\'{e}s.

We will introduce an a posteriori error estimator and derive reliability and
efficiency estimates which are explicit with respect to the wavenumber and the
discretization parameters $h$ and $p$. In contrast to the conventional
conforming finite element method for indefinite problems, the
$\operatorname{dG}$ formulation is unconditionally stable and the adaptive
discretization process may start from a very coarse initial mesh.

Numerical experiments will illustrate the efficiency and robustness of the method.

\end{abstract}

\noindent\emph{AMS Subject Classifications: }35J05, 65N12, 65N30

\noindent\emph{Key words:} Helmholtz equation at high wavenumber, $hp$-finite
elements, a posteriori error estimation, discontinuous Galerkin methods,
ultra-weak variational formulation
\section{Introduction\label{SecIntro}}

High frequency scattering problems are ubiquitous in many fields of
science and engineering and their reliable and efficient numerical
simulation pervades numerous engineering applications such as
detection (e.g., radar), communication (e.g., wireless), and medicine
(e.g., sonic imaging) (\cite{Jackson02}, \cite{Achenbach2005}). These
phenomena are governed by systems of linear partial differential
equations (PDEs); the wave equation for elastic waves and the Maxwell
equations for electromagnetic scattering. We are here interested in
time-harmonic problems where the equation can be reduced to purely
spatial problems; for high frequencies these PDEs become highly
indefinite and the development of accurate numerical solution methods
is far from being in a mature state.

In this paper we will consider the Helmholtz problem with high
wavenumber as our model problem. Although the continuous problem with
appropriate boundary conditions has a unique solution, conventional
$hp$-finite element methods require a minimal resolution condition
such that existence and uniqueness is guaranteed on the discrete level
(see, e.g., \cite{Frankp}, \cite{Ihlenburgbook},
\cite{MelenkSauterMathComp}, \cite{mm_stas_helm2},
\cite{Graham_ActaNumerica_2012}). However, this condition, typically,
contains a generic constant $C$ which is either unknown for specific
problems or only very pessimistic estimates are available. This is one
of the major motivations for the development of \textit{stabilized}
formulations such that the discrete system is always solvable --
well-known examples include \textit{least square techniques
}\cite{monk-wang99,IH06,IH92,ACDG10} and discontinuous Galerkin
($\operatorname{dG}$) methods \cite{FW09,FW,FX,wu12a,zhu-wu12b}. These
formulations lead to discrete systems which are unconditionally
stable, i.e., no resolution condition is required. Although
convergence starts for these methods only after a resolution condition
is reached, the stability of the discrete system is considerably
improved. The \textit{Ultra Weak Variational Formulation} (UWVF) of
Cessenat and Despr\'{e}s
\cite{CessenatDespres1998,CessenatDespres2003,D94} can be understood
as a $\operatorname*{dG}$-method that permits the use of non-standard,
discontinuous local discretization spaces such as plane waves (see
\cite{GittelsonHiptmairPerugia2007,HuttungenMonk2007,hiptmair-moiola-perugia09b,BuffaMonk2007}%
). In this paper we will employ a $hp$-$\operatorname*{dG}$-finite
element method based on the UWVF which was developed in
\cite{GittelsonHiptmairPerugia2007} and generalized in \cite{MPS13}.

Our focus here is on the development of an a posteriori error
estimator for this formulation and its analysis which is explicit with
respect to the discretization parameters $h$, $p$, and the
wavenumber. In contrast to definite elliptic problems, there exist
only relatively few publications in the literature on a posteriori
estimation for highly indefinite problems (cf.  \cite{IrimieBuillard},
\cite{Strouboulis1}, \cite{Strouboulis2}, \cite{Schatz74},
\cite{doerfler_sauter}). The papers which are closely related to our
work are \cite{HoppeHelmholtz2013} and \cite{doerfler_sauter}: a) In
\cite{HoppeHelmholtz2013}, an a posteriori error estimator for the
Helmholtz problem has been developed for the interior penalty
discontinuous Galerkin (IPDG) method and reliability, efficiency,
\textit{and} convergence of the resulting adaptive method is
proved. In contrast, we do not prove the convergence of the resulting
adaptive method for our $\operatorname{dG}%
$-formulation. On the other hand, our estimators are properly weighted
with the polynomial degree and the estimates are explicit with respect
to the wavenumber $k$, the mesh width $h$, \textit{and} the polynomial
degree $p$. In addition, the dependence of the constants in the
estimates on the wavenumber $k$ are milder in our approach compared to
\cite{HoppeHelmholtz2013}; b) In \cite{doerfler_sauter}, a
\textit{residual} a posteriori error estimator (cf.  \cite{BaRe1},
\cite{BaRe2}, \cite{AiOd3}, \cite{verfuerth_book_neu}) has been
developed for the conventional $hp$-finite element method. Although
efficiency and reliability estimates have been proved, a strict
minimal resolution condition is required for the initial finite
element space and this is a severe drawback in the context of adaptive
discretization.

We will prove in this paper, that our a posteriori error estimator for
the $hp$-$\operatorname*{dG}$-finite element method does not require
this strict condition and allows to start the adaptive discretization
process from very coarse finite element meshes and no a priori
information is required.

The paper is organized as follows. In Section \ref{SecdG}, we will
introduce the model problem and its $\operatorname{dG}$-discretization
by $hp$-finite elements. We will recall its unconditional stability
and state the quasi-optimal convergence.

Section \ref{SecApostAna} is devoted to the definition of the residual
a posteriori error estimator and we will prove its reliability and
efficiency.

In Section \ref{SecNumExp} we will present an adaptive discretization
process and report on numerical experiments which illustrate the
behavior of the method for specific model problems such as smooth
problems, problems with singularities, problems with constant,
varying, and discontinuous wavenumber, and the dependence on the
polynomial degree of approximation.

The proof of reliability employs a new $hp$-$C^{1}$ Cl\'{e}ment-type
interpolation operator which will be defined in Appendix \ref{AApprox}
and $hp$-explicit approximation results are proved.

\section{Discontinuous Galerkin (dG)-Discretization\label{SecdG}}

\subsection{Helmholtz Equation with Robin Boundary Conditions}

Let $\Omega\subset\mathbb{R}^{2}$ be a bounded Lipschitz domain with
boundary $\partial\Omega$. The scalar product in $L^{2}\left(
  \Omega\right) $ is denoted by $\left( u,v\right)
:=\int_{\Omega}u\overline{v}$ and the norm by $\left\Vert
  \cdot\right\Vert $.

For $s>0$, the space $H^{s}\left( \Omega\right) $ is the usual Sobolev
space with norm $\left\Vert \cdot\right\Vert _{H^{s}\left(
    \Omega\right) }$. The dual space is denoted by $\left( H^{s}\left(
    \Omega\right) \right) ^{\prime}$ and the trace spaces by
$H^{\sigma}\left( \partial\Omega\right) $ with norm $\left\Vert
  \cdot\right\Vert _{H^{\sigma}\left( \partial \Omega\right) }$. For
$\sigma=0$, we write $\left\Vert \cdot\right\Vert _{\partial\Omega}$
short for $\left\Vert \cdot\right\Vert _{L^{2}\left(
    \partial\Omega\right) } $. The seminorms containing only the
highest derivatives are denoted by $\left\vert \cdot\right\vert
_{H^{s}\left( \Omega\right) }$ and $\left\vert \cdot\right\vert
_{H^{\sigma}\left(
    \partial\Omega\right) }$.

For given $f\in L^2 ( \Omega) $, $g\in L^2(
\partial\Omega)$ we consider the Helmholtz equation with Robin
boundary condition
\begin{equation*}%
  \begin{aligned}
    -\Delta u-k^2 u&=f &&\text{in~}\Omega,\\
    \partial_{\mathbf{n}}u + iku &=g &&\text{on~}\partial\Omega,
  \end{aligned}
\end{equation*}
where $\partial_{\mathbf{n}}u$ denotes the outer normal derivative of
$u$ on the boundary.  In most parts of this paper we assume that $k$
is a positive constant. This is a simplification compared to the
following more general case: There exist positive constants $\kappa$
and $k_{\max}$ such that%
\begin{equation}%
  \begin{array}
    [c]{cll}%
    k\in L^{\infty}\left(  \Omega,\mathbb{R}\right)  , &  & 1<\kappa\leq k\left(
      x\right)  \leq k_{\max}<\infty,\\[3pt]%
    k=\kappa &  & \text{in a neighborhood of }\partial\Omega.
  \end{array}
  \label{min_conditions_on_k}%
\end{equation}
We define the method for, possibly, variable wavenumbers $k$ which
satisfy (\ref{min_conditions_on_k}) while the error analysis is
restricted to the constant case. In the section on numerical
experiments, we will again consider variable wavenumbers $k$.

The weak formulation reads: Find $u\in H^{1}\left(\Omega\right) $ such
that%
\begin{subequations}
  \label{He}
\end{subequations}%
\begin{equation}
  a\left(u,v\right)  =F\left(v\right)  \qquad\forall v\in H^{1}\left(
    \Omega\right)  \tag{%
    \ref{He}%
    a}\label{Hea}%
\end{equation}
with the sesquilinear form $a:H^{1}\left( \Omega\right) \times
H^{1}\left( \Omega\right) \rightarrow\mathbb{C}$ and linear form
$F:H^{1}\left( \Omega\right) \rightarrow\mathbb{C}$ defined by%
\begin{equation}
  a\left(  u,v\right)  :=\int_{\Omega}\left(  \left\langle \nabla u,\overline
      {\nabla v}\right\rangle -k^{2}u\overline{v}\right)  +\operatorname*{i}%
  \int_{\partial\Omega}ku\overline{v}\quad\text{and\quad}F\left(  v\right)
  :=\int_{\Omega}f\overline{v}+\int_{\partial\Omega}g\overline{v}. \tag{%
    \ref{He}%
    b}\label{Heb}%
\end{equation}
The assumptions on the data can be weakened to $f\in
(H^1(\Omega))^{\prime}$ and $g\in H^{-1/2}(\partial\Omega)$. In this
case the integrals in (\ref{Heb}) are understood as dual pairings.

It is well-known that this problem has a unique solution which depends
continuously on the data.

\begin{definition}
  Let $k$ satisfy (\ref{min_conditions_on_k}). On $H^{1}\left(
    \Omega\right) $, we introduce the norm%
  \[
  \left\Vert u\right\Vert _{\mathcal{H}}:=\left\Vert \nabla
    u\right\Vert +\left\Vert ku\right\Vert .
  \]
\end{definition}

\begin{theorem}
  \label{TheoExUni}Let $\Omega\subseteq\mathbb{R}^2$ be a bounded
  Lipschitz domain and let $k=\kappa>1$ be constant.
  \begin{enumerate}
  \item[a.] There exists a constant $C\left( \Omega,\kappa\right) >0$
    such that for every $f\in\left( H^{1}\left( \Omega\right) \right)
    ^{\prime}$ and $g\in H^{-1/2}\left( \partial\Omega\right) $, there
    exists a unique solution $u\in H^{1}\left( \Omega\right) $ of
    problem (\ref{He}) which satisfies%
    \[
    \left\Vert u\right\Vert _{\mathcal{H}}\leq C\left(
      \Omega,\kappa\right) \left( \left\Vert F\right\Vert _{\left(
          H^{1}\left( \Omega\right) \right) ^{\prime}}+\left\Vert
        g\right\Vert _{H^{-1/2}\left( \partial\Omega\right) }\right) .
    \]
  \item[b.] Let $\Omega\subseteq\mathbb{R}^2$ be a bounded star-shaped
    domain with smooth boundary or a bounded convex domain. There
    exists a constant $C\left( \Omega%
    \right) >0$ (depending only on $\Omega$) such that for any $f\in
    L^{2}\left( \Omega\right) $, $g\in
    H^{1/2}\left( \partial\Omega\right) $, the solution of (\ref{He})
    satisfies%
    \begin{align*}
      \left\Vert u\right\Vert _{\mathcal{H}} & \leq C\left(
        \Omega\right) \left( \left\Vert f\right\Vert +\left\Vert
          g\right\Vert _{\partial\Omega
        }\right)  ,\\
      \left\vert u\right\vert _{H^{2}\left( \Omega\right) } & \leq
      C\left( \Omega\right) \left( 1+\kappa\right) \left( \left\Vert
          f\right\Vert +\left\Vert g\right\Vert _{\partial
          \Omega}+\left\Vert g\right\Vert _{H^{1/2}\left(
            \partial \Omega\right) }\right) .
    \end{align*}
  \end{enumerate}
\end{theorem}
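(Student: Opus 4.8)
The plan is to treat the two parts with different tools: part~(a) is a qualitative well-posedness statement that I would obtain from the Fredholm alternative, whereas part~(b) demands a $\kappa$-\emph{explicit} bound and so calls for a Rellich--Morawetz multiplier argument.

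For part~(a) I would first prove \emph{uniqueness} and then deduce existence. Setting $f=0$, $g=0$ and testing (\ref{Hea}) with $v=u$ gives $a(u,u)=\|\nabla u\|^{2}-\kappa^{2}\|u\|^{2}+\mathrm{i}\kappa\|u\|_{\partial\Omega}^{2}=0$; the imaginary part forces $u|_{\partial\Omega}=0$, and the Robin condition then yields $\partial_{\mathbf n}u=-\mathrm{i}\kappa u=0$ on $\partial\Omega$. Thus $u$ has vanishing Cauchy data and solves the homogeneous equation, so extending by zero and invoking unique continuation for $-\Delta-\kappa^{2}$ gives $u\equiv0$. For existence I would split $a=a_{1}-a_{2}$ with $a_{1}(u,v):=\int_{\Omega}\langle\nabla u,\overline{\nabla v}\rangle+\int_{\Omega}u\overline v+\mathrm{i}\kappa\int_{\partial\Omega}u\overline v$ and $a_{2}(u,v):=(\kappa^{2}+1)\int_{\Omega}u\overline v$: the real part of $a_{1}$ is coercive on $H^{1}(\Omega)$, while $a_{2}$ induces a compact operator through the compact embedding $H^{1}(\Omega)\hookrightarrow L^{2}(\Omega)$. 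The Fredholm alternative then upgrades uniqueness to existence, and the open mapping theorem supplies a bounded solution operator. Since $\kappa$ is a fixed constant, $\|\cdot\|_{\mathcal H}$ and $\|\cdot\|_{H^{1}(\Omega)}$ are equivalent, which turns the abstract bound into the asserted estimate with a constant $C(\Omega,\kappa)$.

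For part~(b) the Fredholm constant is no longer admissible, so I would argue by multipliers. On a convex or smooth star-shaped domain the elliptic shift theorem first gives $u\in H^{2}(\Omega)$, so that the Morawetz multiplier is a legitimate test function. I would then use two identities. Testing with $v=u$ as above controls $\kappa\|u\|_{\partial\Omega}^{2}$ by the data and relates the \emph{indefinite} combination $\|\nabla u\|^{2}-\kappa^{2}\|u\|^{2}$ to $\operatorname{Re}F(u)$. Testing with $v=\beta\cdot\nabla u$, where $\beta(x):=x-x_{0}$ and $x_{0}$ is a star-centre, and integrating by parts (the Rellich identity) converts $\int_{\Omega}(-\Delta u)\,\overline{\beta\cdot\nabla u}$ into interior terms proportional to $\|\nabla u\|^{2}$ and $\kappa^{2}\|u\|^{2}$ together with boundary integrals; here star-shapedness is decisive, since $\beta\cdot\mathbf n\ge0$ on $\partial\Omega$ gives the critical boundary term the favourable sign. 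Combining the two identities and absorbing the remaining boundary contributions by the $\|u\|_{\partial\Omega}$-control from the first identity produces the \emph{definite} estimate $\|u\|_{\mathcal H}\le C(\Omega)(\|f\|+\|g\|_{\partial\Omega})$ with $C(\Omega)$ independent of $\kappa$.

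The $H^{2}$-bound then follows by feeding this energy estimate back into elliptic regularity. Writing the PDE as $-\Delta u=f+\kappa^{2}u$ gives $\|\Delta u\|\le\|f\|+\kappa\,\|\kappa u\|\le\|f\|+\kappa\,\|u\|_{\mathcal H}$, while the Robin condition supplies the Neumann trace $\partial_{\mathbf n}u=g-\mathrm{i}\kappa u$, whose $H^{1/2}(\partial\Omega)$-norm is bounded through $\|g\|_{H^{1/2}(\partial\Omega)}$ and $\kappa$ times a trace of $u$. Inserting these into the shift estimate $|u|_{H^{2}(\Omega)}\le C(\Omega)(\|\Delta u\|+\|\partial_{\mathbf n}u\|_{H^{1/2}(\partial\Omega)})$ and using the part-(b) energy bound yields the factor $(1+\kappa)$. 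The \textbf{main obstacle} is the multiplier estimate: one must carry out the Rellich integration by parts on the $H^{2}$ solution, track every power of $\kappa$ so that no spurious growth enters, and exploit the sign condition $\beta\cdot\mathbf n\ge0$ exactly where it is needed; by comparison, part~(a) is routine once unique continuation is granted.
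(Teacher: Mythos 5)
The paper does not prove this theorem at all: it simply cites \cite[Prop.~8.1.3 and~8.1.4]{MelenkDiss}, and your outline (Fredholm alternative plus unique continuation for part~(a), the Rellich--Morawetz multiplier $v=(x-x_{0})\cdot\nabla u$ with the sign of $(x-x_{0})\cdot\mathbf{n}$ on a star-shaped boundary for part~(b), followed by the elliptic shift theorem for the $H^{2}$-bound) is precisely the standard argument carried out in that reference. The only point to watch is that absorbing the cross term $\operatorname{Re}\int_{\partial\Omega}\partial_{\mathbf n}u\,\overline{\beta_{T}\cdot\nabla_{T}u}$ into the favourable $-(\beta\cdot\mathbf n)|\nabla_{T}u|^{2}$ contribution really requires $\beta\cdot\mathbf n\geq\gamma>0$ (star-shapedness with respect to a ball, which convexity supplies), not merely $\beta\cdot\mathbf n\geq0$.
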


For a proof we refer to \cite[Prop. 8.1.3 and .4]{MelenkDiss}.

\begin{remark}\label{rmk:regularity}
  Let $\Omega\subseteq\mathbb{R}^{2}$ be a polygonal Lipschitz domain
  and let $k=\kappa>1$ be a constant.
  For $f\in L^{2}\left( \Omega\right) $ and $g\in
  H_{\operatorname*{pw}}%
  ^{1/2}\left( \partial\Omega\right) :=\left\{ g\in L^{2}\left(
      \partial\Omega\right) :g\text{ is edgewise in }H^{1/2}\right\}
  $, the classical elliptic regularity theory shows that the unique
  solution $u$ of (\ref{He}) is in $H^{3/2+\varepsilon}(\Omega)$ for
  some $\varepsilon>0$ depending on $\Omega$ and we briefly sketch the
  argument:
  We write (\ref{He}) in the following strong form%
  \begin{equation*}
    \begin{aligned}
      -\Delta u&=\tilde{f}:=f+k^{2}u && \text{in }\Omega,\\
      \partial_{\mathbf{n}}u&=\tilde{g}:=g-\operatorname*{i}ku &&
      \text{on }\partial \Omega.
    \end{aligned}
  \end{equation*}
  Since the solution $u$ of (\ref{He}) is in $H^{1}\left(
    \Omega\right) $, we have $\tilde{f}\in L^{2}\left( \Omega\right) $
  and $\tilde{g}\in
  H_{\operatorname*{pw}}^{1/2}\left( \partial\Omega\right) $. From
  \cite[Lemma A1]{mm_stas_helm2}, we conclude that there exists a
  lifting operator
  $\mathcal{L}:H_{\operatorname*{pw}}^{1/2}\left( \partial\Omega\right)
  \rightarrow H^{2}\left( \Omega\right) $ such that
  $G:=\mathcal{L}\left( g\right) $ satisfies $\partial_{\mathbf{n}}
  G=g$ and $\left\Vert G\right\Vert _{H^{2}\left( \Omega\right) }\leq
  C\left\Vert g\right\Vert
  _{H_{\operatorname*{pw}}^{1/2}\left( \partial\Omega\right) }$. Thus,
  the ansatz $u=u_{0}+\tilde{G}$ with $\tilde{G}:=\mathcal{L}\left(
    \tilde {g}\right) $ leads to%
  \begin{equation*}
    \begin{aligned}
      -\Delta u_{0}&=\check{f}:=\tilde{f}+\Delta\tilde{G} && \text{in }\Omega,\\
      \partial_{\mathbf{n}}u_{0}&=0 && \text{on }\partial \Omega
    \end{aligned}
  \end{equation*}
  with $\check{f}\in L^{2}\left( \Omega\right) $. From \cite[(7.22)]%
  {KellogReg} we obtain that the solution $u_{0}$, and thus also $u$,
  then is in $H^{3/2+\varepsilon}\left( \Omega\right) $ for some
  $\varepsilon>0$.
\end{remark}

\subsection{hp-Finite Elements}

Let $\Omega\subset\mathbb{R}^{2}$ be a polygonal domain and let
$\mathcal{T}%
:=\left\{ K_{i}:1\leq i\leq N\right\} $ denote a simplicial finite
element mesh which is \textit{conforming} in the sense that there are
no \textit{hanging }nodes. With each element $K\in\mathcal{T}$ we
associate a polynomial degree $p_{K}\in\mathbb{N}_{\geq1}$.

The diameter of an element $K\in\mathcal{T}$ is denoted by $h_{K}%
:=\operatorname{diam}K$ and the maximal mesh width is
$h_{\mathcal{T}}%
:=\max\left\{ h_{K}:K\in\mathcal{T}\right\} $. The \textit{minimal}
polynomial degree is%
\[
p_{\mathcal{T}}:=\min\left\{ p_{K}: K\in\mathcal{T}\right\} .
\]
The \textit{shape regularity} of $\mathcal{T}$ is described by the
constant%
\begin{equation}
  \rho_{\mathcal{T}}:=\max\left\{  \frac{h_{K}}{\operatorname{diam}%
      \mathcal{B}_{K}}:K\in\mathcal{T}\right\}  , \label{defrho}%
\end{equation}
where $\mathcal{B}_{K}$ is the maximal inscribed ball in $K$. Since
$\mathcal{T}$ contains finitely many simplices, the constant $\rho
_{\mathcal{T}}$ is always bounded but becomes large if the simplices
are degenerate, e.g., are flat or needle-shaped. The constants in the
following estimates depend on the mesh via the constant
$\rho_{\mathcal{T}}$; they are bounded for any fixed
$\rho_{\mathcal{T}}$ but, possibly, become large for large
$\rho_{\mathcal{T}}$.

Concerning the polynomial degree distribution we assume throughout the
paper that the polynomial degrees of neighboring elements are
comparable\footnote{We use here the same constant $\rho_{\mathcal{T}}$
  as for the shape regularity to simplify the notation.}:%
\begin{equation}
  \rho_{\mathcal{T}}^{-1}\left(  p_{K}+1\right)  \leq p_{K^{\prime}}+1\leq
  \rho_{\mathcal{T}}\left(  p_{K}+1\right)  \qquad\forall K,K^{\prime}%
  \in\mathcal{T}\text{ with }K\cap K^{\prime}\neq\emptyset. \label{defrho2}%
\end{equation}

By convention the triangles $K\in\mathcal{T}$ are closed sets. The
boundary of a triangle $K\in\mathcal{T}$ consists of three
one-dimensional (relatively closed) edges which are collected in the
set $\mathcal{E}\left( K\right) $.
The subset $\mathcal{E}^{I}\left( K\right) \subseteq\mathcal{E}\left(
  K\right) $ of \textit{inner} edges consists of all edges
$e\in\mathcal{E}%
\left( K\right) $ whose relative interior lie in (the open set)
$\Omega$ while $\mathcal{E}^{B}\left( K\right) :=\mathcal{E}\left(
  K\right) \backslash\mathcal{E}^{I}\left( K\right) $ is the set of
\textit{boundary} edges\textit{.} Further we set%
\[
\partial^{B}K:=\partial
K\cap\partial\Omega\quad\text{and\quad}\partial ^{I}K:=\partial
K\backslash\partial^{B}K.
\]

The conformity of the mesh implies that any $e\in\mathcal{E}^{I}\left(
  K\right) $ is shared by two and only two triangles in
$\mathcal{T}$. The sets of inner/boundary/all edges $\mathcal{E}^{I}$,
$\mathcal{E}^{B}$, $\mathcal{E}$, are defined by%
\[
\mathcal{E}^{I}:=\left\{ e\in\mathcal{E}^{I}\left( K\right) :K\in
  \mathcal{T}\right\} ,\quad\mathcal{E}^{B}:=\left\{ e\in\mathcal{E}%
  ^{B}\left( K\right) :K\in\mathcal{T}\right\} ,\quad\mathcal{E}%
:=\mathcal{E}^{I}\cup\mathcal{E}^{B}.
\]
The interior skeleton $\mathfrak{S}^{I}$ is given by
\[
\mathfrak{S}^{I}:=%
{\displaystyle\bigcup\limits_{K\in\mathcal{T}}}
\partial^{I}K.
\]
Next we introduce patches associated with an edge $e$ or an element
$K$ of the triangulation
\[
\omega_{e}:=\bigcup_{\left\{ K^{\prime}\in\mathcal{T}: e \cap
    K^{\prime}%
    \neq\emptyset\right\} }K^{\prime}\quad\text{and}\quad\omega_{K}%
:=\bigcup_{\left\{ K^{\prime}\in\mathcal{T}: K\cap K^{\prime}\neq
    \emptyset\right\} }K^{\prime}.
\]

Furthermore, we employ the notation%
\begin{equation}
  p_{e}:=\min_{\substack{K\in\mathcal{T} \\e\subset\partial K}}p_{K}%
  \quad\text{and\quad}h_{e}:=\left\vert e\right\vert \quad\text{with the length
  }\left\vert e\right\vert \text{ of }e\text{.} \label{defehe}%
\end{equation}
We define the \textit{mesh functions} $\mathfrak{h}_{\mathcal{T}}%
,\mathfrak{p}_{\mathcal{T}}\in L^{\infty}\left( \Omega\right) $ and
$\mathfrak{h}_{\mathcal{E}},\mathfrak{p}_{\mathcal{E}}$ $\in
L^{\infty}\left( \mathfrak{S}\right) $ by%
\[
\forall K\in\mathcal{T}:~~\left.  \left(
    \mathfrak{h}_{\mathcal{T}}\right) \right\vert _{K}:=h_{K},~~\left.
  \mathfrak{p}_{\mathcal{T}}\right\vert
_{K}:=p_{K}\quad\text{and\quad}\forall e\in\mathcal{E}:~~\left.
  \left( \mathfrak{h}_{\mathcal{E}}\right) \right\vert
_{e}:=h_{e},~~\left.  \mathfrak{p}_{\mathcal{E}}\right\vert
_{e}:=p_{e}.
\]
We skip the indices $\mathcal{T}$ and $\mathcal{E}$ and write short
$\mathfrak{h}$, $\mathfrak{p}$ if no confusion is possible. In the
error estimates, the quantity $k\mathfrak{h}/\mathfrak{p}$ will play
an important role since it is a measure how well the $hp$-finite
element space \textit{resolves} the oscillations in the
solution. Therefore we define
\begin{equation}
  M_{\frac{\operatorname*{kh}}{\operatorname*{p}}}:=\max\left\{  \left\Vert
      \frac{\mathfrak{h}}{\mathfrak{p}}k\right\Vert _{L^{\infty}\left(
        \mathfrak{S}\right)  },\left\Vert \frac{\mathfrak{h}}{\mathfrak{p}%
      }k\right\Vert _{L^{\infty}\left(  \Omega\right)  }\right\}  . \label{defMkhp}%
\end{equation}

The non-conforming $hp$-finite element space for the mesh
$\mathcal{T}$ with local polynomials of degree $p_{K}$ is given by
\begin{equation}
  S_{\mathcal{T}}^{\mathfrak{p}}:=\left\{  u\in L^{2}\left(  \Omega\right)  :
    \left.  u\right\vert _{K}\in\mathbb{P}_{p_{K}}~\forall K\in\mathcal{T}%
  \right\}  . \label{defspt}%
\end{equation}
Here $\mathbb{P}_{p}$ denotes the space of bivariate polynomials of
maximal total degree $p$. For a subset $\omega\subset\Omega$, we write
$\mathbb{P}%
_{p}\left( \omega\right) $ to indicate explicitly that we consider
$u\in\mathbb{P}_{p}\left( \omega\right) $ as a polynomial on $\omega$.

Finally, throughout this paper $C>0$ stands for a generic constant
that does not depend on the parameters $k$, $h_{K}$, and $p_{K}$ and
may change its value in each occurence.

\subsection{$\operatorname{dG}$ Formulation}

For the discretization of the Helmholtz problem we employ a
$\operatorname{dG}%
$ formulation which has been derived from the ultra-weak variational
formulation (cf. \cite{CessenatDespres1998,CessenatDespres2003,D94})
in \cite{GittelsonHiptmairPerugia2007},
\cite{hiptmair-moiola-perugia09b}, and generalized in \cite{MPS13}. It
involves jumps and mean values across edges which we will introduce
next. For an inner edge $e\in\mathcal{E}^{I}$ with two adjacent
triangles $K$, $K^{\prime}\in\mathcal{T}$ we set for simplexwise
sufficiently smooth functions $v$ and vector valued functions
$\mathbf{w}$%
\begin{align*}
  \hspace{-0.1cm}%
  \begin{array} [c]{rlcrl}%
    \left.
      \llbr
      v%
      \rrbr
    \right\vert _{e}\hspace{-0.2cm} & :=\left.  \left(  \left.  v\right\vert
        _{K}\right)  \right\vert _{e}-\left.  \left(  \left.  v\right\vert
        _{K^{\prime}}\right)  \right\vert _{e},\hspace{-0.1cm} &  & \left.  \left\{
        v\right\}  \right\vert _{e}\hspace{-0.2cm} & :=\frac{1}{2}\left(  \left.
        \left(  \left.  v\right\vert _{K}\right)  \right\vert _{e}+\left.  \left(
          \left.  v\right\vert _{K^{\prime}}\right)  \right\vert _{e}\right)  ,\\
    \left.
      \llbr
      v%
      \rrbr
      _{N}\right\vert _{e}\hspace{-0.2cm} & :=\left.  \left(  \left.  v\right\vert
        _{K}\right)  \right\vert _{e}\mathbf{n}_{K}+\left.  \left(  \left.
          v\right\vert _{K^{\prime}}\right)  \right\vert _{e}\mathbf{n}_{K^{\prime}%
    },\hspace{-0.1cm} &  & \left.
      \llbr
      \mathbf{w}%
      \rrbr
      _{N}\right\vert _{e}\hspace{-0.2cm} & :=\left.  \left(  \left.  \mathbf{w}%
        \right\vert _{K}\right)  \right\vert _{e}\cdot\mathbf{n}_{K}+\left.  \left(
        \left.  \mathbf{w}\right\vert _{K^{\prime}}\right)  \right\vert _{e}%
    \cdot\mathbf{n}_{K^{\prime}},
  \end{array}
\end{align*}
where $\mathbf{n}_{K}$, $\mathbf{n}_{K^{\prime}}$ are the respective
outer normal vectors on the boundary of $K$ and $K^{\prime}$ and
\textquotedblleft%
$\cdot$\textquotedblright\ denotes the Euclidean scalar product. The
sign in $\left.
  \llbr
  v%
  \rrbr
\right\vert _{e}$ is arbitrary.

The $\operatorname{dG}$-discretization of (\ref{He}) reads: Find
$u_{\mathcal{T}}\in S_{\mathcal{T}}^{\mathfrak{p}}$ such that%
\begin{subequations}
  \label{dgsesquilin}
\end{subequations}%
\begin{equation}
  a_{\mathcal{T}}\left(  u_{\mathcal{T}},v\right)  =F_{\mathcal{T}}\left(
    v\right)  \qquad\forall v\in S_{\mathcal{T}}^{\mathfrak{p}}\tag{%
    \ref{dgsesquilin}%
    a}\label{dgsesquilin0}%
\end{equation}
with the sesquilinear form%
\begin{align}
  a_{\mathcal{T}}\left( u,v\right) := & (\nabla_{\mathcal{T}}u,\nabla
  _{\mathcal{T}}v)-k^{2}\left( u,v\right) -\left(
    \llbr
    u%
    \rrbr
    _{N},\left\{ \nabla_{\mathcal{T}}v\right\} \right)
  _{\mathfrak{S}^{I}%
  }-\left( \left\{ \nabla_{\mathcal{T}}u\right\} ,%
    \llbr
    v%
    \rrbr
    _{N}\right)  _{\mathfrak{S}^{I}}\nonumber\\
  & -\left( {\mathfrak{d}\frac{k\mathfrak{h}}{\mathfrak{p}}}u,\nabla
    _{\mathcal{T}}v\cdot\mathbf{n}\right) _{\partial\Omega}-\left(
    {\mathfrak{d}\frac{k\mathfrak{h}}{\mathfrak{p}}}\nabla_{\mathcal{T}}%
    u\cdot\mathbf{n,}v\right)  _{\partial\Omega}\nonumber\\
  & -\frac{1}{\operatorname*{i}}\left(
    {\mathfrak{b}\frac{\mathfrak{h}%
      }{\mathfrak{p}}}%
    \llbr
    \nabla_{\mathcal{T}}u%
    \rrbr
    _{N},%
    \llbr
    \nabla_{\mathcal{T}}v%
    \rrbr
    _{N}\right) _{\mathfrak{S}^{I}}-\frac{1}{\operatorname*{i}}\left(
    {\mathfrak{d}\frac{\mathfrak{h}}{\mathfrak{p}}}\nabla_{\mathcal{T}}%
    u\cdot\mathbf{n},\nabla_{\mathcal{T}}v\cdot\mathbf{n}\right)
  _{\partial
    \Omega}\nonumber\\
  & +\operatorname*{i}\left( {\mathfrak{a}\frac{\mathfrak{p}^{2}}%
      {\mathfrak{h}}}%
    \llbr
    u%
    \rrbr
    _{N},%
    \llbr
    v%
    \rrbr
    _{N}\right) _{\mathfrak{S}^{I}}+\operatorname*{i}\left( k\left(
      1-{\mathfrak{d}\frac{k\mathfrak{h}}{\mathfrak{p}}}\right)
    u,v\right) _{\partial\Omega},\tag{%
    \ref{dgsesquilin}%
    b}\label{dgsesquilina}%
\end{align}
where $\nabla_{\mathcal{T}}$ denotes the simplexwise gradient, $\Delta
_{\mathcal{T}}$ the simplexwise Laplacean, and $\left(
  \cdot,\cdot\right) _{\mathfrak{S}^{I}}$, $\left( \cdot,\cdot\right)
_{{\partial\Omega}}$ are the $L^{2}\left( \mathfrak{S}^{I}\right) $
and $L^{2}\left( \partial \Omega\right) $ scalar products. Moreover,
the fixed constants
\[
\mathfrak{a}>0,\qquad\mathfrak{b}>0,\qquad\mathfrak{d}>0
\]
are at our disposal and will be adjusted later. The functional
$F_{\mathcal{T}%
}$ is defined by%
\begin{equation}
  F_{\mathcal{T}}\left(  v\right)  :=\left(  f,v\right)  -\left(  \frac
    {{\mathfrak{d}\mathfrak{h}}}{\operatorname*{i}\mathfrak{p}}g,\nabla
    _{\mathcal{T}}v\cdot\mathbf{n}\right)  _{\partial\Omega}+\left(  \left(
      1-{\mathfrak{d}\frac{k\mathfrak{h}}{\mathfrak{p}}}\right)  g,v\right)
  _{\partial\Omega}.\tag{%
    \ref{dgsesquilin}%
    c}\label{dgsesquilinb}%
\end{equation}

\begin{remark}
  \label{RemUniqueSolvability}In \cite[Section 3, Remark 3.2]{MPS13}
  it is proved that the condition:%
  \begin{equation}
    \left\Vert {\mathfrak{d}\frac{k\mathfrak{h}}{\mathfrak{p}}}\right\Vert
    _{L^{\infty}\left(  \partial\Omega\right)  }{<1/2}\label{solvabilitycond}%
  \end{equation}
  implies the unique solvability of the discrete system
  (\ref{dgsesquilin}). As a consequence, the discrete system is
  \emph{always} solvable for sufficiently small $\mathfrak{d}>0$. In
  addition, for any \emph{fixed} $\mathfrak{d}>0$, condition
  (\ref{solvabilitycond}) can be regarded as an \emph{explicit}
  condition on $\mathfrak{h}$ and $\mathfrak{p}$. This is a
  significant improvement compared to the condition%
  \[
  \left\Vert
    {\mathfrak{d}\frac{k\mathfrak{h}}{\mathfrak{p}}}\right\Vert
  _{L^{\infty}\left( \partial\Omega\right) }{<C\quad}\text{for
    \textquotedblleft sufficiently\textquotedblright\ small }C>0
  \]
  which is typically imposed for the solvability of the standard
  finite element discretization of the Helmholtz problem
  (cf. \cite[Sec. 4.1.3]{Ihlenburgbook} and
  \cite{MelenkSauterMathComp, mm_stas_helm2}).
\end{remark}

\begin{remark}
  For $s>0$, let the \emph{broken }Sobolev space
  $H_{\mathcal{T}}^{s}\left( \Omega\right) $ be defined by%
  \[
  H_{\mathcal{T}}^{s}\left( \Omega\right) :=\left\{ u\in L^{2}\left(
      \Omega\right) \mid\forall K\in\mathcal{T}:\left.  u\right\vert
    _{K}\in H^{s}\left( K\right) \right\} .
  \]
  Then, $a_{\mathcal{T}}\left( \cdot,\cdot\right) $ can be extended to
  a sesquilinear form on $H_{\mathcal{T}}^{3/2+\varepsilon}\left(
    \Omega\right) \times H_{\mathcal{T}}^{3/2+\varepsilon}\left(
    \Omega\right) $ and $F_{\mathcal{T}}\left( \cdot\right) $ to a
  linear functional $F_{\mathcal{T}%
  }:H_{\mathcal{T}}^{3/2+\varepsilon}\left( \Omega\right) \rightarrow
  \mathbb{C}$ for any $\varepsilon>0$.
\end{remark}

\subsection{Discrete Stability and Convergence}

The following mesh-depending norms on
$H_{\mathcal{T}}^{3/2+\varepsilon }\left( \Omega\right) $ for
$\varepsilon>0$ have been introduced in
\cite{GittelsonHiptmairPerugia2007}:
\begin{subequations}
  \label{dgnorms}
\end{subequations}%
\begin{align}
  \left\Vert v\right\Vert _{\operatorname{dG}}:= & \left( \left\Vert
      \nabla_{\mathcal{T}}v\right\Vert ^{2}+\left\Vert
      \sqrt{{\mathfrak{b}%
          \frac{\mathfrak{h}}{\mathfrak{p}}}}%
      \llbr
      \nabla_{\mathcal{T}}v%
      \rrbr
      _{N}\right\Vert _{\mathfrak{S}^{I}}^{2}+\left\Vert
      \sqrt{{\mathfrak{a}%
          \frac{\mathfrak{p}^{2}}{\mathfrak{h}}}}%
      \llbr
      v%
      \rrbr
      _{N}\right\Vert _{\mathfrak{S}^{I}}^{2}\right. \tag{%
    \ref{dgnorms}%
    a$_1$}\label{dgnormsa}\\
  & \left.  +\left\Vert
      \sqrt{{\mathfrak{d}\frac{\mathfrak{h}}{\mathfrak{p}}}%
      }\nabla_{\mathcal{T}}v\cdot\mathbf{n}\right\Vert
    _{\partial\Omega}%
    ^{2}+\left\Vert \sqrt{k\left( 1-{\mathfrak{d}\frac{k\mathfrak{h}%
            }{\mathfrak{p}}}\right) }v\right\Vert
    _{\partial\Omega}^{2}+\left\Vert kv\right\Vert ^{2}\right)
  ^{1/2},\tag{%
    \ref{dgnorms}%
    a$_2$}\label{dgnormsa2}\\
  \;\left\Vert v\right\Vert _{\operatorname{dG}^{+}}:= & \left(
    \left\Vert v\right\Vert _{\operatorname{dG}}^{2}+\left\Vert \left(
        {\mathfrak{a}%
          \frac{\mathfrak{p}^{2}}{\mathfrak{h}}}\right) ^{-1/2}\left\{
        \nabla _{\mathcal{T}}v\right\} \right\Vert
    _{\mathfrak{S}^{I}}^{2}\right) ^{1/2}.  \tag{%
    \ref{dgnorms}%
    b}\label{dgnormsb}%
\end{align}

Before formulating the stability and convergence theorem, we have to
introduce some notation.

The \emph{adjoint Helmholtz problem} reads: For given $w\in
L^{2}(\Omega)$, find $z\in H^{1}\left( \Omega\right) $ such that%
\begin{equation}
  a\left(  v,z\right)  =\left(  v,w\right)  \qquad\forall v\in H^{1}\left(
    \Omega\right)  . \label{adjproblem}%
\end{equation}
The assumptions of Theorem \ref{TheoExUni} ensure well-posedness of
the adjoint problem (cf. \cite[Prop. 8.1.4]{MelenkDiss},
\cite{cummings-feng06}, \cite[Thm. 2.4]{MelenkHelmStab2010},
\cite{MPS13}) and defines a bounded solution operator
$Q_{k}^{\star}:L^{2}(\Omega)\rightarrow H^{1}\left( \Omega\right) $,
$w\mapsto z$.

\begin{lemma}
  \label{LemAdjCons}Let $\Omega\subset\mathbb{R}^{2}$ be a polygonal
  Lipschitz domain and let $w\in L^{2}\left( \Omega\right) $. Then,
  (\ref{adjproblem}) is a well-posed problem.  Denote its solution by
  $z$. Then $z$ satisfies $z\in H^{3/2+\varepsilon}$ for some
  $\varepsilon>0$ depending on $\Omega$ and moreover
  \[
  a_{\mathcal{T}}\left( v,z\right) =\left( v,w\right) \qquad\forall
  v\in H_{\mathcal{T}}^{3/2+\varepsilon}\left( \Omega\right) \text{.}%
  \]
\end{lemma}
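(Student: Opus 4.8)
The plan is to establish the three assertions --- well-posedness, the regularity $z\in H^{3/2+\varepsilon}$, and the discrete consistency identity --- in turn, the last being the real point. \emph{Well-posedness and strong form.} First I would note that (\ref{adjproblem}) is, up to complex conjugation, a Helmholtz problem of the same type as (\ref{He}). Writing out $a(v,z)=(v,w)$ for all $v\in H^{1}(\Omega)$ and integrating by parts on $\Omega$ identifies $z$ as the weak solution of
\[
-\Delta z-k^{2}z=w\quad\text{in }\Omega,\qquad \partial_{\mathbf{n}}z-\operatorname{i}kz=0\quad\text{on }\partial\Omega,
\]
i.e. the Robin sign is flipped relative to (\ref{He}). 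Well-posedness then follows from Theorem~\ref{TheoExUni} applied to this conjugate problem (equivalently, from the references cited above the lemma). Since the right-hand side is $w\in L^{2}(\Omega)$ and the boundary datum vanishes, the regularity $z\in H^{3/2+\varepsilon}(\Omega)$ is exactly the conclusion of Remark~\ref{rmk:regularity}; the change of sign in the boundary condition does not affect that argument, as it only replaces $\tilde{g}=g-\operatorname{i}kz$ by $\operatorname{i}kz$, which still lies in $H^{1/2}_{\operatorname{pw}}(\partial\Omega)$ because $z\in H^{1}(\Omega)$.

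\emph{Discarding the jumps.} The decisive structural input is that $z$ is a single global $H^{3/2+\varepsilon}$ function, not merely a broken one. Hence its interior jumps vanish, $\llbr z\rrbr_{N}=0$ and $\llbr\nabla_{\mathcal{T}}z\rrbr_{N}=0$ on $\mathfrak{S}^{I}$, and the mean value reduces to the genuine trace, $\{\nabla_{\mathcal{T}}z\}=\nabla z$. Substituting $z$ into (\ref{dgsesquilina}) therefore immediately annihilates the term $(\{\nabla_{\mathcal{T}}v\},\llbr z\rrbr_{N})_{\mathfrak{S}^{I}}$, the penalty $\frac{1}{\operatorname{i}}(\mathfrak{b}\frac{\mathfrak{h}}{\mathfrak{p}}\llbr\nabla_{\mathcal{T}}v\rrbr_{N},\llbr\nabla_{\mathcal{T}}z\rrbr_{N})_{\mathfrak{S}^{I}}$, and the penalty $\operatorname{i}(\mathfrak{a}\frac{\mathfrak{p}^{2}}{\mathfrak{h}}\llbr v\rrbr_{N},\llbr z\rrbr_{N})_{\mathfrak{S}^{I}}$.

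\emph{Elementwise Green identity and cancellation.} On each $K$ I would apply the generalized Green formula $\int_{K}\nabla v\cdot\overline{\nabla z}=-\int_{K}v\,\overline{\Delta z}+\int_{\partial K}v\,\overline{\partial_{\mathbf{n}}z}$, legitimate because $z|_{K}\in H^{3/2+\varepsilon}(K)$ has $\Delta z\in L^{2}(K)$ and $\nabla z$ admits an $L^{2}$-trace on $\partial K$. Summing over $K$ and inserting $\Delta z=-k^{2}z-w$ turns the volume contribution into $(v,w)+k^{2}(v,z)$, while the element-boundary contributions split, using single-valuedness of $\nabla z$ and the opposite normals on shared edges, into the interior part $(\llbr v\rrbr_{N},\nabla z)_{\mathfrak{S}^{I}}$ and a part on $\partial\Omega$ equal to $\int_{\partial\Omega}v\,\overline{\partial_{\mathbf{n}}z}$. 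The interior part is cancelled exactly by $-(\llbr v\rrbr_{N},\{\nabla_{\mathcal{T}}z\})_{\mathfrak{S}^{I}}$ in $a_{\mathcal{T}}$. It then remains to collect the surviving $\partial\Omega$-terms: after inserting the boundary condition $\partial_{\mathbf{n}}z=\operatorname{i}kz$ everywhere, the two terms carrying $\nabla_{\mathcal{T}}v\cdot\mathbf{n}$ (namely $-(\mathfrak{d}\frac{k\mathfrak{h}}{\mathfrak{p}}\nabla_{\mathcal{T}}v\cdot\mathbf{n},z)_{\partial\Omega}$ and $-\frac{1}{\operatorname{i}}(\mathfrak{d}\frac{\mathfrak{h}}{\mathfrak{p}}\nabla_{\mathcal{T}}v\cdot\mathbf{n},\nabla_{\mathcal{T}}z\cdot\mathbf{n})_{\partial\Omega}$) cancel each other, while the Green boundary term $\int_{\partial\Omega}v\,\overline{\partial_{\mathbf{n}}z}$, the term $-(\mathfrak{d}\frac{k\mathfrak{h}}{\mathfrak{p}}v,\nabla_{\mathcal{T}}z\cdot\mathbf{n})_{\partial\Omega}$, and the impedance term $\operatorname{i}(k(1-\mathfrak{d}\frac{k\mathfrak{h}}{\mathfrak{p}})v,z)_{\partial\Omega}$ cancel among themselves once the factors $\operatorname{i}$ and $\frac{1}{\operatorname{i}}=-\operatorname{i}$ are tracked. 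What survives is precisely $(v,w)$.

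\emph{Main obstacle.} The only genuinely delicate point will be justifying the elementwise Green identity and the existence of $L^{2}$-traces of $\nabla z$ on $\mathfrak{S}^{I}$ and $\partial\Omega$; this is exactly why the regularity threshold is $3/2+\varepsilon$ rather than merely $3/2$, since $z\in H^{3/2+\varepsilon}$ yields $\nabla z\in H^{1/2+\varepsilon}$, which does possess an $L^{2}$-trace, and this is also what makes $a_{\mathcal{T}}(\cdot,z)$ well-defined on the broken space $H_{\mathcal{T}}^{3/2+\varepsilon}(\Omega)$. Everything else is careful bookkeeping of complex conjugates and of the constants $\mathfrak{a},\mathfrak{b},\mathfrak{d}$.
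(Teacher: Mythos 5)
Your argument is correct: the reduction by complex conjugation to a primal problem of type (\ref{He}) with data $(\bar w,0)$ gives well-posedness via Theorem \ref{TheoExUni} and the $H^{3/2+\varepsilon}$ regularity via Remark \ref{rmk:regularity}, and the elementwise Green identity together with the vanishing of $\llbr z\rrbr_{N}$, $\llbr\nabla_{\mathcal{T}}z\rrbr_{N}$ and the boundary condition $\partial_{\mathbf{n}}z=\operatorname{i}kz$ makes all extra terms of $a_{\mathcal{T}}$ cancel exactly as you describe (I checked the sign bookkeeping of the $\mathfrak{d}$-terms and the impedance term; it is right). The paper itself does not prove this lemma but delegates it to \cite[Rem.~2.6, Lem.~2.7]{MPS13}; your computation is precisely the standard consistency argument that this citation stands for, so it supplies the omitted details rather than taking a different route.
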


This follows from \cite[Rem. 2.6, Lem. 2.7.]{MPS13}.

The key role for the convergence estimates for Helmholtz-type problems
is played by the \emph{adjoint approximation property} which will be
defined next.

\begin{definition}
  Let $S\subset H^{1}\left( \Omega\right) $ be a subspace of
  $H^{1}\left( \Omega\right)$.  Then the adjoint approximation
  property is given by
  \begin{equation}
    \sigma_{k}^{\star}\left(  S\right)  :=\sup_{g\in L^{2}(\Omega)\backslash
      \left\{  0\right\}  }\inf_{v\in S}\dfrac{\left\Vert Q_{k}^{\star}\left(
          k^{2}g\right)  -v\right\Vert _{\operatorname{dG}^{+}}}{\left\Vert
        kg\right\Vert }. \label{defsigmastar}%
  \end{equation}
\end{definition}

There holds the following result on uniqueness and quasi-optimality of
the $\operatorname*{dG}$-finite element solution (see
\cite[Sec. 3]{MPS13}, \cite[Rem. 2.3.1, .2 and
Thm. 2.3.5]{ZechMaster}, and Remark
\ref{rmk:regularity}). \begin{theorem}
  \label{thm:stab} Let $k=\kappa$ be constant satisfying
  (\ref{min_conditions_on_k}). Let $\Omega\subset\mathbb{R}^{2}$ be a
  polygonal Lipschitz domain. Furthermore assume that the constant
  $\mathfrak{a}$ in (\ref{dgsesquilina}) is chosen sufficiently large
  and condition (\ref{solvabilitycond}) is fulfilled. Then, the
  $\operatorname*{dG}$-problem (\ref{dgsesquilin}) has a unique
  solution $u_{\mathcal{T}}\in S_{\mathcal{T}}^{\mathfrak{p}}$.  If,
  in addition, the adjoint approximation condition%
  \begin{align}\label{adjointapp}
    \sigma_{k}^{\ast}\left( S_{\mathcal{T}}^{\mathfrak{p}}\right) \leq
    C_{\ast}%
  \end{align}
  holds for some $C_{\ast}>0$, then, the quasi-optimal error estimate%
  \[
  \left\Vert u-u_{\mathcal{T}}\right\Vert _{\operatorname{dG}}\leq
  C\inf_{v\in S_{\mathcal{T}}^{\mathfrak{p}}}\left\Vert u-v\right\Vert
  _{\operatorname{dG}^{+}}%
  \]
  holds, where $C$ is independent of $k$, $\mathfrak{h}$, and
  $\mathfrak{p}$.
\end{theorem}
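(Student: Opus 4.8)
The plan is to run a Schatz-type duality (Aubin--Nitsche) argument in the mesh-dependent norms $\|\cdot\|_{\operatorname{dG}}$ and $\|\cdot\|_{\operatorname{dG}^+}$. Unique solvability of (\ref{dgsesquilin}) is already provided by Remark \ref{RemUniqueSolvability} under the solvability condition (\ref{solvabilitycond}), so it remains to establish the quasi-optimal bound. Two standard ingredients are recorded first. By Remark \ref{rmk:regularity} the exact solution satisfies $u\in H_{\mathcal{T}}^{3/2+\varepsilon}(\Omega)$, so that $a_{\mathcal{T}}(u,v)=F_{\mathcal{T}}(v)$ for all $v\in S_{\mathcal{T}}^{\mathfrak{p}}$; subtracting (\ref{dgsesquilin0}) yields the Galerkin orthogonality $a_{\mathcal{T}}(e,v)=0$ for all $v\in S_{\mathcal{T}}^{\mathfrak{p}}$, where $e:=u-u_{\mathcal{T}}$. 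Secondly, termwise Cauchy--Schwarz applied to (\ref{dgsesquilina}) gives a continuity estimate of the form $|a_{\mathcal{T}}(w_1,w_2)|\le C\|w_1\|_{\operatorname{dG}}\|w_2\|_{\operatorname{dG}^+}$: each mean value $\{\nabla_{\mathcal{T}}\cdot\}$ is paired with the matching jump $\llbr\cdot\rrbr_N$ and the penalty weight $\mathfrak{a}\mathfrak{p}^2/\mathfrak{h}$ is distributed with exponents $+\tfrac12$ and $-\tfrac12$ between the two factors. The only delicate point here is the term carrying $\{\nabla_{\mathcal{T}}w_1\}$; it is handled using that the conforming functions $u$ and $z$ (the adjoint solution introduced below) have vanishing normal jumps $\llbr u\rrbr_N=\llbr z\rrbr_N=0$, and that for the discrete contributions the inverse/trace inequality $\|\{\nabla_{\mathcal{T}}v\}\|_{\mathfrak{S}^I}\le C\sqrt{\mathfrak{p}^2/\mathfrak{h}}\,\|\nabla_{\mathcal{T}}v\|$ upgrades $\|\cdot\|_{\operatorname{dG}}$ to control of the mean gradient.

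The core of the argument is a discrete G\aa rding-type (quasi-coercivity) estimate
\begin{equation*}
\|v\|_{\operatorname{dG}}^2\le C\bigl(|a_{\mathcal{T}}(v,v)|+\|kv\|^2\bigr)\qquad\forall v\in S_{\mathcal{T}}^{\mathfrak{p}},
\end{equation*}
which I expect to be the main obstacle. It exploits the sign structure of (\ref{dgsesquilina}): the imaginary part $\operatorname{Im}a_{\mathcal{T}}(v,v)$ reproduces \emph{exactly} the penalty and boundary contributions $\mathfrak{a}\frac{\mathfrak{p}^2}{\mathfrak{h}}\|\llbr v\rrbr_N\|_{\mathfrak{S}^I}^2$, $\mathfrak{b}\frac{\mathfrak{h}}{\mathfrak{p}}\|\llbr\nabla_{\mathcal{T}}v\rrbr_N\|_{\mathfrak{S}^I}^2$, $\mathfrak{d}\frac{\mathfrak{h}}{\mathfrak{p}}\|\nabla_{\mathcal{T}}v\cdot\mathbf{n}\|_{\partial\Omega}^2$ and $\|\sqrt{k(1-\mathfrak{d}k\mathfrak{h}/\mathfrak{p})}\,v\|_{\partial\Omega}^2$ appearing in the $\operatorname{dG}$-norm, where positivity of the last term requires (\ref{solvabilitycond}); the real part equals $\|\nabla_{\mathcal{T}}v\|^2-\|kv\|^2$ minus the symmetric consistency terms $2\operatorname{Re}(\llbr v\rrbr_N,\{\nabla_{\mathcal{T}}v\})_{\mathfrak{S}^I}$ and the analogous boundary term. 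These consistency terms are absorbed by Young's inequality: using the inverse inequality one bounds $\|(\mathfrak{a}\mathfrak{p}^2/\mathfrak{h})^{-1/2}\{\nabla_{\mathcal{T}}v\}\|_{\mathfrak{S}^I}^2\le C\mathfrak{a}^{-1}\|\nabla_{\mathcal{T}}v\|^2$, so that for $\mathfrak{a}$ chosen sufficiently large the gradient and penalty terms dominate them. This is precisely where the hypothesis on $\mathfrak{a}$ and the explicit bookkeeping in $\mathfrak{h},\mathfrak{p},k$ enter, and where one must check that all constants stay independent of $k$, $\mathfrak{h}$, $\mathfrak{p}$.

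It remains to control the indefinite term $\|ke\|$ by duality. Setting $z:=Q_k^\star(k^2e)$, the adjoint consistency of Lemma \ref{LemAdjCons} gives $a_{\mathcal{T}}(e,z)=(e,k^2e)=\|ke\|^2$, and Galerkin orthogonality allows subtracting an arbitrary $v\in S_{\mathcal{T}}^{\mathfrak{p}}$, whence $\|ke\|^2=a_{\mathcal{T}}(e,z-v)\le C\|e\|_{\operatorname{dG}}\|z-v\|_{\operatorname{dG}^+}$. Taking the infimum over $v$ and invoking the definition (\ref{defsigmastar}) of $\sigma_k^\star$ with $g=e$ yields $\|ke\|\le C\,\sigma_k^\star(S_{\mathcal{T}}^{\mathfrak{p}})\,\|e\|_{\operatorname{dG}}$. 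Likewise, Galerkin orthogonality and continuity give $|a_{\mathcal{T}}(e,e)|=|a_{\mathcal{T}}(e,u-v)|\le C\|e\|_{\operatorname{dG}}\inf_{v\in S_{\mathcal{T}}^{\mathfrak{p}}}\|u-v\|_{\operatorname{dG}^+}$. Inserting both into the G\aa rding estimate gives
\begin{equation*}
\|e\|_{\operatorname{dG}}^2\le C\|e\|_{\operatorname{dG}}\inf_{v\in S_{\mathcal{T}}^{\mathfrak{p}}}\|u-v\|_{\operatorname{dG}^+}+C\,\sigma_k^\star(S_{\mathcal{T}}^{\mathfrak{p}})^2\,\|e\|_{\operatorname{dG}}^2.
\end{equation*}
Choosing the threshold $C_\ast$ in (\ref{adjointapp}) so small that $C\,C_\ast^2\le\tfrac12$, the last term is absorbed into the left-hand side and division by $\|e\|_{\operatorname{dG}}$ delivers the claimed bound, with a constant depending only on $\Omega$, $\rho_{\mathcal{T}}$, $\mathfrak{a}$, $\mathfrak{b}$, $\mathfrak{d}$ and $C_\ast$ but not on $k$, $\mathfrak{h}$, $\mathfrak{p}$.
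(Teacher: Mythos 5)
The paper does not actually prove Theorem \ref{thm:stab}; it is quoted from \cite[Sec.~3]{MPS13} and \cite{ZechMaster}, so there is no in-paper argument to compare against. Your reconstruction follows exactly the route of those references: unique solvability from Remark \ref{RemUniqueSolvability}, the observation that $\operatorname{Im}a_{\mathcal{T}}(v,v)$ returns precisely the penalty and boundary terms of $\left\Vert \cdot\right\Vert _{\operatorname{dG}}$ (your sign bookkeeping for the $-\tfrac{1}{\operatorname*{i}}(\cdot,\cdot)$ and $+\operatorname*{i}(\cdot,\cdot)$ terms is correct, and positivity of the $k(1-\mathfrak{d}k\mathfrak{h}/\mathfrak{p})$ term indeed uses (\ref{solvabilitycond})), a G\aa rding inequality with the consistency terms absorbed for $\mathfrak{a}$ large, Galerkin orthogonality via the consistency of $a_{\mathcal{T}}$ for $u\in H^{3/2+\varepsilon}(\Omega)$, and the Schatz duality step through Lemma \ref{LemAdjCons} and (\ref{defsigmastar}). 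The handling of the mean-gradient term $\{\nabla_{\mathcal{T}}w_1\}$ by splitting off the discrete contribution and using an inverse estimate, with the continuous contribution accounting for the extra term in $\left\Vert \cdot\right\Vert _{\operatorname{dG}^{+}}$, is also the standard resolution and is correct, though you state it rather loosely (the vanishing of $\llbr u\rrbr_N$ is relevant for the \emph{other} consistency term, $(\llbr w_1\rrbr_N,\{\nabla_{\mathcal{T}}w_2\})$, not for controlling $\{\nabla_{\mathcal{T}}e\}$).

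The one substantive discrepancy is your final absorption step. You require $C\,C_{\ast}^{2}\le\tfrac12$, i.e.\ $\sigma_{k}^{\ast}(S_{\mathcal{T}}^{\mathfrak{p}})$ below a concrete threshold determined by the G\aa rding and continuity constants, whereas the theorem as written hypothesizes only $\sigma_{k}^{\ast}(S_{\mathcal{T}}^{\mathfrak{p}})\le C_{\ast}$ ``for some $C_{\ast}>0$.'' With your chain of inequalities the quadratic term $C\sigma_{k}^{\ast}{}^{2}\left\Vert e\right\Vert _{\operatorname{dG}}^{2}$ on the right cannot be absorbed when $C_{\ast}$ is large, and the argument then yields nothing, not merely a worse constant; so your proof establishes the statement only under a smallness reading of (\ref{adjointapp}). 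This is consistent with how the result is actually obtained in the cited references (the resolution condition discussed in Remark \ref{Rempoll}.b is what renders $\sigma_{k}^{\ast}$ small), but you should say explicitly that either $C_{\ast}$ must be sufficiently small or the constant $C$ in the conclusion is allowed to degenerate as $C_{\ast}$ approaches the absorption threshold; as it stands your last paragraph silently replaces the theorem's hypothesis by a stronger one.
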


\section{A Posteriori Error Estimation\label{SecApostAna}}

In this section we will derive and analyze a residual type a posteriori
estimator for the $\operatorname{dG}$-formulation (\ref{dgsesquilin}) of the
Helmholtz problem (\ref{He}). General techniques of a posteriori error
estimation for elliptic problems are described in \cite{AiOd3},
\cite{MelenkWohlmuth}, \cite{verfuerth_book_neu} while the focus in
\cite{PietroErn} is on $\operatorname{dG}$-methods. A posteriori error
estimation for the conventional conforming discretization of the Helmholtz
problem are described in \cite{doerfler_sauter} and for an IPDG method in
\cite{HoppeHelmholtz2013}.

For the derivation of an a posteriori error estimator for the
$\operatorname{dG}$-formulation of the Helmholtz problem the main challenges
are a) the lower order term $-k^{2}\left(  \cdot,\cdot\right)  $ in the
sesquilinear forms $a\left(  \cdot,\cdot\right)  $ and $a_{\mathcal{T}}\left(
\cdot,\cdot\right)  $, which causes the problem to be highly indefinite and b)
the integrals in \eqref{dgsesquilina} containing the mean of the gradient on
interior edges, which have the effect that $a_{\mathcal{T}}\left(  \cdot
,\cdot\right)  +2k^{2}(\cdot,\cdot)_{L^{2}}$ is not coercive on
$H_{\mathcal{T}}^{3/2+\varepsilon}\cap H^{1}(\Omega)$, $\varepsilon>0$, with
respect to the norm $\left\Vert \cdot\right\Vert _{\mathcal{H}}$.

\subsection{The Residual Error Estimator}

\begin{definition}
  For $v\in S_{\mathcal{T}}^{\mathfrak{p}}$ and $K\in \mathcal{T}$,
  the \emph{local error estimator} is%
  \begin{subequations}
    \label{locerrorest}
  \end{subequations}%
  \begin{equation}
    \eta_{K}\left(  v\right)  :=\left(  \eta_{R_{K}}^{2}\left(  v\right)
      +\eta_{E_{K}}^{2}\left(  v\right)  +\eta_{J_{K}}^{2}\left(  v\right)  \right)
    ^{1/2}\tag{%
      \ref{locerrorest}%
      a}\label{locerroresta}%
  \end{equation}
  with the \emph{internal residual} $\eta_{R_{K}}$, the \emph{edge
    residual} $\eta_{E_{K}}$, and the \emph{trace residual
  }$\eta_{J_{K}}$ given by%
  \begin{align}
    \eta_{R_{K}}\left( v\right) & :=\left( \frac{h_{K}}{p_{K}}\right)
    \left\Vert \Delta_{\mathcal{T}}v+k^{2}v+f\right\Vert _{L^{2}\left(
        K\right) }\tag{%
      \ref{locerrorest}%
      b}\label{locerrorestb}\\
    \eta_{E_{K}}\left( v\right) & :=\left\{ \frac{1}{2}\left\Vert
        \sqrt {{\mathfrak{b}\frac{\mathfrak{h}}{\mathfrak{p}}}}%
        \llbr
        \nabla_{\mathcal{T}}v%
        \rrbr
        _{N}\right\Vert _{\partial^{I}K}^{2}+\left\Vert
        \sqrt{\mathfrak{h}}\left(
          g-\partial_{\mathbf{n}}v-\operatorname*{i}kv\right)
      \right\Vert _{\partial^{B}K}%
      ^{2}\right\} ^{1/2},\tag{%
      \ref{locerrorest}%
      c}\label{locerrorestc}\\
    \eta_{J_{K}}\left( v\right) & :=\frac{1}{\sqrt{2}}\left\Vert
      \sqrt{{\mathfrak{a}\frac{\mathfrak{p}^2}{\mathfrak{h}}} }%
      \llbr
      v%
      \rrbr
    \right\Vert _{\partial^{I}K}.\tag{%
      \ref{locerrorest}%
      d}\label{locerrorestd}%
  \end{align}
  The \emph{global error estimator} is%
  \begin{subequations}
    \label{globerrorest}
  \end{subequations}%
  \begin{equation}
    \eta\left(  v\right)  :=\left(  \eta_{R}^{2}(v)+\eta_{E}^{2}(v)+\eta_{J}^{2}(v)\right)
    ^{1/2}\tag{%
      \ref{globerrorest}%
      a}\label{globerroresta}%
  \end{equation}
  with%
  \begin{equation}
    \eta_{R}\left(  v\right)  :=\left(  \sum_{K\in\mathcal{T}}\eta_{R_{K}}%
      ^{2}\left(  v\right)  \right)  ^{2},~~\eta_{E}\left(  v\right)  :=\left(
      \sum_{K\in\mathcal{T}}\eta_{E_{K}}^{2}\left(  v\right)  \right)  ^{2}%
    ,~~\eta_{J}\left(  v\right)  :=\left(  \sum_{K\in\mathcal{T}}\eta_{J_{K}%
      }^{2}\left(  v\right)  \right)  ^{2}.\tag{%
      \ref{globerrorest}%
      b}\label{globerrorestb}%
  \end{equation}
  For the solution $u_{\mathcal{T}}$ of (\ref{dgsesquilin}), we write
  $\eta$ short for $\eta\left( u_{\mathcal{T}}\right) $ and similarly
  for $\eta_{R_{K}}$, $\eta_{E_{K}}$, etc.
\end{definition}

\subsection{Reliability}

We start the derivation of the reliability estimate by bounding the
$\operatorname{dG}$-norm of the error by parts of the estimator plus
the $k$-weighted $L^{2}$-norm of the error.

\begin{lemma}
  \label{LemReliability}Let $\Omega\subseteq\mathbb{R}^2$ be a
  polygonal Lipschitz domain.  Let $k=\kappa$ be constant satisfying
  (\ref{min_conditions_on_k}) and let $p_{\mathcal{T}}\geq 1$. Let
  $u\in H^{3/2+\varepsilon}\left( \Omega\right) $ be the solution of
  (\ref{He}) for some $\varepsilon>0$ and assume that
  $u_{\mathcal{T}}\in S_{\mathcal{T}}^{\mathfrak{p}}$ solves
  (\ref{dgsesquilin}). Furthermore assume that the constant
  $\mathfrak{a}$ in (\ref{dgsesquilina}) is chosen sufficiently large.
  Then, there exists a constant $C>0$ which only depends on
  $\mathfrak{b}$, $\mathfrak{d}$, $\rho_{\mathcal{T}}$, and $\Omega$
  such that%
  \[
  \left\Vert u-u_{\mathcal{T}}\right\Vert _{\operatorname{dG}}\leq
  C\left( C_{\operatorname{conf}}^{3/2}\eta\left(
      u_{\mathcal{T}}\right) +C_{\operatorname{conf}}^{1/2}\left\Vert
      k\left( u-u_{\mathcal{T}}\right) \right\Vert \right) .
  \]
  where%
  \[
  C_{\operatorname{conf}}:=1+M_{\frac{\operatorname*{kh}}{\operatorname*{p}}}.
  \]
\end{lemma}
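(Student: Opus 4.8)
The form $a_{\mathcal{T}}$ is highly indefinite and, as observed at the beginning of this section, $a_{\mathcal{T}}+2k^{2}(\cdot,\cdot)$ fails to be coercive on the broken space because of the mean-of-gradient edge terms; circumventing this is the whole difficulty. The plan is to transfer the estimate to a conforming function whose \emph{gradient} jumps vanish, where a G\aa rding-type inequality \emph{is} available, and to pay for the transfer with the jump part of the estimator. Concretely I would use three ingredients: consistency plus Galerkin orthogonality of the scheme, a $C^{1}$-conforming/non-conforming splitting of $u_{\mathcal{T}}$ built from the $hp$-$C^{1}$ Cl\'{e}ment operator $I_{\mathrm{cl}}$ of Appendix \ref{AApprox}, and the $hp$-explicit approximation bounds for $I_{\mathrm{cl}}$.

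First I would record that the exact solution is consistent, $a_{\mathcal{T}}(u,v)=F_{\mathcal{T}}(v)$ for all $v\in H_{\mathcal{T}}^{3/2+\varepsilon}(\Omega)$ (this follows exactly as the adjoint statement in Lemma \ref{LemAdjCons}), so that together with the discrete equation (\ref{dgsesquilin}) one has Galerkin orthogonality $a_{\mathcal{T}}(u-u_{\mathcal{T}},v)=0$ for $v\in S_{\mathcal{T}}^{\mathfrak{p}}$; equivalently the residual functional $\mathcal{R}(\cdot):=F_{\mathcal{T}}(\cdot)-a_{\mathcal{T}}(u_{\mathcal{T}},\cdot)$ vanishes on $S_{\mathcal{T}}^{\mathfrak{p}}$. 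Next I set $u_{\mathcal{T}}^{c}:=I_{\mathrm{cl}}u_{\mathcal{T}}$, $r:=u_{\mathcal{T}}-u_{\mathcal{T}}^{c}$ and $e^{c}:=u-u_{\mathcal{T}}^{c}$, so that $u-u_{\mathcal{T}}=e^{c}-r$. The point of a $C^{1}$ operator is that $u_{\mathcal{T}}^{c}$ has continuous gradient, while $u\in H^{3/2+\varepsilon}(\Omega)$ has single-valued normal gradient traces; hence both $\llbr e^{c}\rrbr_{N}=0$ and $\llbr \nabla_{\mathcal{T}}e^{c}\rrbr_{N}=0$. Moreover, since $u$ is globally continuous and conforming, the function- and gradient-jumps of $u-u_{\mathcal{T}}$ coincide with those of $u_{\mathcal{T}}$, so the two jump contributions to $\|u-u_{\mathcal{T}}\|_{\operatorname{dG}}^{2}$ are exactly $\eta_{J}^{2}$ and the interior part of $\eta_{E}^{2}$, and the approximation estimates for $I_{\mathrm{cl}}$ bound $\|r\|_{\operatorname{dG}^{+}}$ by these same jump quantities (a first source of the factor $C_{\operatorname{conf}}=1+M_{\frac{\operatorname*{kh}}{\operatorname*{p}}}$, coming from the $\mathfrak{h}/\mathfrak{p}$- and $k$-weights).

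It then remains to control $\|e^{c}\|_{\operatorname{dG}}$. Because all jumps of $e^{c}$ vanish, evaluating $a_{\mathcal{T}}(e^{c},e^{c})$ kills every penalty and mean-gradient term and leaves, in its real part, $\|\nabla_{\mathcal{T}}e^{c}\|^{2}-k^{2}\|e^{c}\|^{2}$ plus a boundary cross-term weighted by $\mathfrak{d}\frac{k\mathfrak{h}}{\mathfrak{p}}$, while its imaginary part reproduces exactly the two boundary terms of $\|\cdot\|_{\operatorname{dG}}$. Adding $2k^{2}\|e^{c}\|^{2}$, estimating the cross-term by Young's inequality, and using the solvability margin (\ref{solvabilitycond}) to absorb it, I obtain the G\aa rding bound $\|e^{c}\|_{\operatorname{dG}}^{2}\lesssim|a_{\mathcal{T}}(e^{c},e^{c})|+2k^{2}\|e^{c}\|^{2}$. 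I then split $a_{\mathcal{T}}(e^{c},e^{c})=a_{\mathcal{T}}(u-u_{\mathcal{T}},e^{c})+a_{\mathcal{T}}(r,e^{c})$ and, by orthogonality, write the first term as $\mathcal{R}(e^{c}-I_{\mathrm{cl}}e^{c})$. Setting $\psi:=e^{c}-I_{\mathrm{cl}}e^{c}$ (again with vanishing function- and gradient-jumps), elementwise integration by parts turns $\mathcal{R}(\psi)$ into the volume residual $\Delta_{\mathcal{T}}u_{\mathcal{T}}+k^{2}u_{\mathcal{T}}+f$ paired with $\psi$, the interior gradient jump $\llbr\nabla_{\mathcal{T}}u_{\mathcal{T}}\rrbr_{N}$ paired with $\psi$, the function jump $\llbr u_{\mathcal{T}}\rrbr_{N}$ paired with $\nabla\psi$, and the boundary Robin residual $g-\partial_{\mathbf{n}}u_{\mathcal{T}}-\operatorname*{i}ku_{\mathcal{T}}$. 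Cauchy--Schwarz followed by the $hp$-Cl\'{e}ment estimates, whose weights $h_{K}/p_{K}$, $\sqrt{\mathfrak{h}/\mathfrak{p}}$ and $\sqrt{\mathfrak{p}^{2}/\mathfrak{h}}$ match those built into $\eta_{R_{K}},\eta_{E_{K}},\eta_{J_{K}}$, bound each pairing by the estimator times $\|e^{c}\|_{\operatorname{dG}}$; here the hypothesis that $\mathfrak{a}$ is large enters, supplying a factor $\mathfrak{a}^{-1/2}$ that tames the $\eta_{J}$-pairing $(\llbr u_{\mathcal{T}}\rrbr_{N},\nabla\psi)$. The remaining term $a_{\mathcal{T}}(r,e^{c})$ is handled by continuity of $a_{\mathcal{T}}$ in the $\operatorname{dG}^{+}\times\operatorname{dG}$ norms, giving $\|r\|_{\operatorname{dG}^{+}}\|e^{c}\|_{\operatorname{dG}}$ with $\|r\|_{\operatorname{dG}^{+}}$ again controlled by the jump estimator.

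Finally I would absorb the resulting $\|e^{c}\|_{\operatorname{dG}}$ into the left-hand side, collect the estimator contributions, and use $\|ke^{c}\|\leq\|k(u-u_{\mathcal{T}})\|+\|kr\|$ with $\|kr\|$ once more bounded by the jump estimator; combining with $\|u-u_{\mathcal{T}}\|_{\operatorname{dG}}\leq\|e^{c}\|_{\operatorname{dG}}+\|r\|_{\operatorname{dG}}$ and tracking the $M_{\frac{\operatorname*{kh}}{\operatorname*{p}}}$-weights through the interpolation bounds and the quadratic (Young) absorption yields $\|u-u_{\mathcal{T}}\|_{\operatorname{dG}}^{2}\lesssim C_{\operatorname{conf}}^{3}\eta^{2}+C_{\operatorname{conf}}\|k(u-u_{\mathcal{T}})\|^{2}$, whose square root is the claimed bound with exponents $3/2$ and $1/2$. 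The main obstacle throughout is exactly the non-coercivity flagged above: one cannot test the error directly. The resolution is the $C^{1}$ conforming approximation, which simultaneously (a) makes the gradient jumps of $e^{c}$ and of $\psi$ vanish, so that the mean-gradient edge terms drop out and both the G\aa rding inequality and the residual expansion become clean, and (b) isolates the non-conformity into $r$, whose size is measured precisely by the jump parts of $\eta$.
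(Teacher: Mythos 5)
Your overall architecture (conforming/non-conforming splitting, a G\aa rding inequality for the conforming part, Galerkin orthogonality plus $hp$-Cl\'{e}ment bounds for the residual) is in the right spirit, but the central step does not close, and the failure is exactly the obstruction you yourself flag at the outset. When you expand $a_{\mathcal{T}}(u-u_{\mathcal{T}},\psi)$ with $\psi=e^{c}-I_{\mathrm{cl}}e^{c}$ via the representation of Lemma \ref{LemAltRepA}, the pairing of $\llbr u_{\mathcal{T}}\rrbr_{N}$ against ``$\nabla\psi$'' is really $\left(\llbr u_{\mathcal{T}}\rrbr_{N},\{\nabla_{\mathcal{T}}\psi\}\right)_{\mathfrak{S}^{I}}$, and $\{\nabla_{\mathcal{T}}\psi\}$ contains the skeleton trace of $\nabla e^{c}$ with $e^{c}=u-u_{\mathcal{T}}^{c}$. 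That trace is \emph{not} controlled by $\left\Vert e^{c}\right\Vert_{\operatorname{dG}}$: controlling it is precisely what the extra term in $\left\Vert\cdot\right\Vert_{\operatorname{dG}^{+}}$ is for, and for a function that is merely $H^{3/2+\varepsilon}$ it cannot be bounded by volume quantities at all. The same object reappears in $a_{\mathcal{T}}(r,e^{c})$ as $-\left(\llbr r\rrbr_{N},\{\nabla_{\mathcal{T}}e^{c}\}\right)_{\mathfrak{S}^{I}}$, so your claimed continuity bound $|a_{\mathcal{T}}(r,e^{c})|\le C\left\Vert r\right\Vert_{\operatorname{dG}^{+}}\left\Vert e^{c}\right\Vert_{\operatorname{dG}}$ is false; one needs $\left\Vert e^{c}\right\Vert_{\operatorname{dG}^{+}}$ on the right (compare (\ref{LemabsorbPart1}), where the paper is careful to put the $\operatorname{dG}^{+}$ norm on the test function). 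Since your G\aa rding inequality only controls $\left\Vert e^{c}\right\Vert_{\operatorname{dG}}$, the final absorption fails. (The two offending terms in fact cancel exactly when $a_{\mathcal{T}}(u-u_{\mathcal{T}},e^{c})$ and $a_{\mathcal{T}}(r,e^{c})$ are added, because $\llbr e^{c}\rrbr_{N}=0$; but you estimate the two pieces separately by Cauchy--Schwarz, which destroys the cancellation.) The paper's proof circumvents all of this by never testing $a_{\mathcal{T}}$ against a non-polynomial conforming function: it introduces the auxiliary positive form $\tilde a_{\mathcal{T}}$ (with $+k^{2}$), bounds $\left\Vert u-u_{\mathcal{T}}^{\ast}\right\Vert_{\tilde a}$ by duality over conforming $\varphi$ with $\left\Vert\varphi\right\Vert_{\tilde a}\le1$, and subtracts $a_{\mathcal{T}}\left(u-u_{\mathcal{T}},I_{1}^{\operatorname{hp}}\varphi\right)=0$; in the resulting difference $\llbr u_{\mathcal{T}}\rrbr_{N}$ is only ever paired with $\nabla I_{1}^{\operatorname{hp}}\varphi$, a piecewise polynomial handled by an inverse inequality and $H^{1}$-stability.

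A second, smaller issue is that you use one operator $I_{\mathrm{cl}}$ for two different jobs. The $C^{1}$ Cl\'{e}ment operator of Theorem \ref{TheoClQI} is defined on Sobolev functions and its error bounds are expressed through $\left\vert u\right\vert_{W^{n,q}(\omega_{K})}$; applied to the discontinuous $u_{\mathcal{T}}$ it does not come with the jump-based bounds you need for $\left\Vert r\right\Vert_{\operatorname{dG}^{+}}$. The correct tool for approximating $u_{\mathcal{T}}$ conformingly is Theorem \ref{TheoConfApprox} / Corollary \ref{CorConfApprox}, whose estimates are stated in terms of $\left\Vert\llbr u_{\mathcal{T}}\rrbr\right\Vert_{\mathfrak{S}^{I}}$ and which is what ties $\left\Vert u_{\mathcal{T}}-u_{\mathcal{T}}^{\ast}\right\Vert$ to $\eta_{J}$.
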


Before we prove this lemma we compute an alternative representation of
the term $a_{\mathcal{T}}\left( u-u_{\mathcal{T}},v\right) $ which
will be used frequently in the following.

\begin{lemma}
  \label{LemAltRepA}Let $u\in H^{3/2+\varepsilon}(\Omega)$ be the
  solution of (\ref{He}) for some $\varepsilon>0$ and assume that
  $u_{\mathcal{T}}\in S_{\mathcal{T}}^{\mathfrak{p}}$ solves
  (\ref{dgsesquilin}). Then, we have for $v\in
  H_{\mathcal{T}}^{3/2+\tilde \varepsilon}\left( \Omega\right) $,
  $\tilde \varepsilon>0$,%
  \begin{align}
    &a_{\mathcal{T}}\left( u-u_{\mathcal{T}},v\right) =\left( f+\Delta
      _{\mathcal{T}}u_{\mathcal{T}}+k^{2}u_{\mathcal{T}},v\right)
    -\left(
      \llbr
      \nabla_{\mathcal{T}}u_{\mathcal{T}}%
      \rrbr
      _{N},\left\{ v\right\} \right) _{\mathfrak{S}^{I}}+\left(
      \llbr
      u_{\mathcal{T}}%
      \rrbr
      _{N},\left\{ \nabla_{\mathcal{T}}v\right\} \right)
    _{\mathfrak{S}^{I}%
    }\nonumber\\
    &\qquad~ +\left( \left(
        1-{\mathfrak{d}\frac{k\mathfrak{h}}{\mathfrak{p}}}\right)
      \left( g-\partial_{\mathbf{n}%
        }u_{\mathcal{T}}-\operatorname*{i}ku_{\mathcal{T}}\right)
      ,v\right) _{\partial\Omega}-\left(
      {\frac{\mathfrak{d}\mathfrak{h}}{\operatorname*{i}\mathfrak{p}}}\left(
        g-\partial_{\mathbf{n}}u_{\mathcal{T}}-\operatorname*{i}ku_{\mathcal{T}%
        }\right)  ,\partial_{\mathbf{n}}v\right)  _{\partial\Omega}\nonumber\\
    &\qquad~ -\left(
      \operatorname*{i}{\mathfrak{a}\frac{\mathfrak{p}^2}{\mathfrak{h}}}%
      \llbr
      u_{\mathcal{T}}%
      \rrbr
      _{N},%
      \llbr
      v%
      \rrbr
      _{N}\right) _{\mathfrak{S}^{I}}+\left(
      {\frac{\mathfrak{b}\mathfrak{h}}{\operatorname*{i}\mathfrak{p}}}%
      \llbr
      \nabla_{\mathcal{T}}u_{\mathcal{T}}%
      \rrbr
      _{N},%
      \llbr
      \nabla_{\mathcal{T}}v%
      \rrbr
      _{N}\right) _{\mathfrak{S}^{I}}. \label{4.1.1}%
  \end{align}
\end{lemma}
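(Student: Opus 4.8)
The plan is to start from the splitting
$a_{\mathcal{T}}(u-u_{\mathcal{T}},v)=a_{\mathcal{T}}(u,v)-a_{\mathcal{T}}(u_{\mathcal{T}},v)$ and to dispose of the first term by consistency. Since $u\in H^{3/2+\varepsilon}(\Omega)$ solves (\ref{He}), it is conforming, so $\llbr u\rrbr_N=0$ and $\llbr\nabla_{\mathcal{T}}u\rrbr_N=0$ on $\mathfrak{S}^{I}$, while on $\partial\Omega$ the Robin condition $\partial_{\mathbf{n}}u+\operatorname*{i}ku=g$ holds. Integrating $(\nabla_{\mathcal{T}}u,\nabla_{\mathcal{T}}v)$ by parts elementwise, inserting $-\Delta u=k^{2}u+f$, and using these identities collapses $a_{\mathcal{T}}(u,v)$ exactly to $F_{\mathcal{T}}(v)$; this is the primal counterpart of the adjoint consistency in Lemma \ref{LemAdjCons} and may equally be quoted from \cite{MPS13}. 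Thus the claim reduces to showing that $F_{\mathcal{T}}(v)-a_{\mathcal{T}}(u_{\mathcal{T}},v)$ equals the right-hand side of (\ref{4.1.1}); the broken regularity $v\in H_{\mathcal{T}}^{3/2+\tilde\varepsilon}(\Omega)$ is exactly what is needed for the edge traces of $\nabla_{\mathcal{T}}v$ below to be well defined.

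The core of the argument is an elementwise integration by parts in $a_{\mathcal{T}}(u_{\mathcal{T}},v)$. First I would rewrite $(\nabla_{\mathcal{T}}u_{\mathcal{T}},\nabla_{\mathcal{T}}v)=-(\Delta_{\mathcal{T}}u_{\mathcal{T}},v)+\sum_{K}\int_{\partial K}(\partial_{\mathbf{n}}u_{\mathcal{T}})\overline{v}$ and split the interelement boundary sum into its $\mathfrak{S}^{I}$- and $\partial\Omega$-parts. On the interior skeleton the standard $\operatorname{dG}$ product identity $(\mathbf{w}|_{K}\cdot\mathbf{n}_{K})(v|_{K})+(\mathbf{w}|_{K'}\cdot\mathbf{n}_{K'})(v|_{K'})=\llbr\mathbf{w}\rrbr_N\{v\}+\{\mathbf{w}\}\cdot\llbr v\rrbr_N$ (using $\mathbf{n}_{K'}=-\mathbf{n}_{K}$) turns this into $(\llbr\nabla_{\mathcal{T}}u_{\mathcal{T}}\rrbr_N,\{v\})_{\mathfrak{S}^{I}}+(\{\nabla_{\mathcal{T}}u_{\mathcal{T}}\},\llbr v\rrbr_N)_{\mathfrak{S}^{I}}$. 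The second summand cancels precisely against the term $-(\{\nabla_{\mathcal{T}}u_{\mathcal{T}}\},\llbr v\rrbr_N)_{\mathfrak{S}^{I}}$ already present in (\ref{dgsesquilina}); what survives are the volume term $(f+\Delta_{\mathcal{T}}u_{\mathcal{T}}+k^{2}u_{\mathcal{T}},v)$, the two interior-edge terms involving $\{v\}$ and $\{\nabla_{\mathcal{T}}v\}$, and the $\mathfrak{a}$- and $\mathfrak{b}$-penalties (each carried over with its sign flipped by the subtraction, matching the last two lines of (\ref{4.1.1})).

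The remaining and most delicate step is the regrouping of the $\partial\Omega$ contributions. After substitution these form a sum of nine integrals pairing $g$, $\partial_{\mathbf{n}}u_{\mathcal{T}}$, $u_{\mathcal{T}}$ with $v$ and with $\partial_{\mathbf{n}}v$, weighted by $1$, $\mathfrak{d}\tfrac{k\mathfrak{h}}{\mathfrak{p}}$, and $\tfrac{\mathfrak{d}\mathfrak{h}}{\operatorname*{i}\mathfrak{p}}$. I would verify that they collect exactly into $((1-\mathfrak{d}\tfrac{k\mathfrak{h}}{\mathfrak{p}})(g-\partial_{\mathbf{n}}u_{\mathcal{T}}-\operatorname*{i}ku_{\mathcal{T}}),v)_{\partial\Omega}$ and $-(\tfrac{\mathfrak{d}\mathfrak{h}}{\operatorname*{i}\mathfrak{p}}(g-\partial_{\mathbf{n}}u_{\mathcal{T}}-\operatorname*{i}ku_{\mathcal{T}}),\partial_{\mathbf{n}}v)_{\partial\Omega}$. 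The key algebraic checks are that the $-\operatorname*{i}ku_{\mathcal{T}}$ inside the first residual reproduces, up to the sign introduced by the subtraction, the Robin penalty $\operatorname*{i}(k(1-\mathfrak{d}\tfrac{k\mathfrak{h}}{\mathfrak{p}})u_{\mathcal{T}},v)_{\partial\Omega}$ of (\ref{dgsesquilina}), and that $\tfrac{\mathfrak{d}\mathfrak{h}}{\operatorname*{i}\mathfrak{p}}\cdot\operatorname*{i}k=\mathfrak{d}\tfrac{k\mathfrak{h}}{\mathfrak{p}}$ turns the $u_{\mathcal{T}}$-against-$\partial_{\mathbf{n}}v$ penalty into the correct coefficient. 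I expect this boundary bookkeeping — keeping the factors of $\operatorname*{i}$ and the conjugation in the sesquilinear form straight — to be the only genuine obstacle; the interior manipulations are routine $\operatorname{dG}$ algebra.
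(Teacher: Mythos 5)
Your proposal is correct, and the computational content is the same as in the paper; only the organization differs. The paper applies the elementwise integration by parts (the ``$\operatorname{dG}$-magic formula'') directly to the difference $u-u_{\mathcal{T}}$, substitutes $-\Delta u-k^{2}u=f$ and $\partial_{\mathbf{n}}(u-u_{\mathcal{T}})+\operatorname*{i}k(u-u_{\mathcal{T}})=g-\partial_{\mathbf{n}}u_{\mathcal{T}}-\operatorname*{i}ku_{\mathcal{T}}$, and only at the very end uses $u\in H^{3/2+\varepsilon}(\Omega)$ to drop the interior jumps of $u$ and $\nabla u$. You instead first peel off $a_{\mathcal{T}}(u,v)=F_{\mathcal{T}}(v)$ by primal consistency and then carry out the identical integration by parts and boundary bookkeeping on $u_{\mathcal{T}}$ alone, identifying the right-hand side of (\ref{4.1.1}) with the residual functional $F_{\mathcal{T}}(v)-a_{\mathcal{T}}(u_{\mathcal{T}},v)$. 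Both routes rest on exactly the same ingredients (elementwise integration by parts, the PDE, the Robin condition, conformity of $u$), so neither is more general; your version makes the residual structure slightly more transparent, at the mild cost of having to justify primal consistency separately --- you are right that this is not literally Lemma \ref{LemAdjCons} (which is the adjoint statement) but its primal counterpart, and your sketch of its proof (elementwise integration by parts plus the Robin condition collapsing $a_{\mathcal{T}}(u,\cdot)$ to $F_{\mathcal{T}}$) is sound. Your sign and $\operatorname*{i}$-factor bookkeeping on $\partial\Omega$, in particular the identity $\frac{\mathfrak{d}\mathfrak{h}}{\operatorname*{i}\mathfrak{p}}\cdot\operatorname*{i}k=\mathfrak{d}\frac{k\mathfrak{h}}{\mathfrak{p}}$ and the cancellation of the $(\{\nabla_{\mathcal{T}}u_{\mathcal{T}}\},\llbr v\rrbr_{N})_{\mathfrak{S}^{I}}$ terms, checks out.
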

\begin{proof}%
  Note that $-\Delta u-k^{2}u=f$ in $\Omega$. Integrating by parts we
  obtain with the \textquotedblleft$\operatorname{dG}$-magic
  formula\textquotedblright%
  \begin{align*}
    & \left( \nabla_{\mathcal{T}}\left( u-u_{\mathcal{T}}\right)
      ,\nabla_{\mathcal{T}}v\right) -\left( k^{2}\left(
        u-u_{\mathcal{T}}\right) ,v\right) =\left(
      f+\Delta_{\mathcal{T}}u_{\mathcal{T}}+k^{2}u_{\mathcal{T}%
      },v\right) \\
    & ~\quad+\left( \nabla_{\mathcal{T}}\left(
        u-u_{\mathcal{T}}\right) \cdot\mathbf{n},v\right)
    _{\partial\Omega}+\left(
      \llbr
      \nabla_{\mathcal{T}}\left( u-u_{\mathcal{T}}\right)
      \rrbr
      _{N},\left\{ v\right\} \right) _{\mathfrak{S}^{I}}+\left(
      \left\{ \nabla_{\mathcal{T}}\left( u-u_{\mathcal{T}}\right)
      \right\} ,%
      \llbr
      v%
      \rrbr
      _{N}\right) _{\mathfrak{S}^{I}}.
  \end{align*}

  By inserting this into (\ref{dgsesquilina}) and using
  $\partial_{\mathbf{n}%
  }\left( u-u_{\mathcal{T}}\right) +\operatorname*{i}k\left(
    u-u_{\mathcal{T}%
    }\right)
  =g-\partial_{\mathbf{n}}u_{\mathcal{T}}-\operatorname*{i}%
  ku_{\mathcal{T}}$ on $\partial\Omega$ we get%
  \begin{align*}
    & a_{\mathcal{T}}\left( u-u_{\mathcal{T}},v\right) =\left(
      f+\Delta
      _{\mathcal{T}}u_{\mathcal{T}}+k^{2}u_{\mathcal{T}},v\right) \\
    & ~+\left(
      \llbr
      \nabla_{\mathcal{T}}\left( u-u_{\mathcal{T}}\right)
      \rrbr
      _{N},\left\{ v\right\} \right) _{\mathfrak{S}^{I}}-\left(
      \llbr
      u-u_{\mathcal{T}}%
      \rrbr
      _{N},\left\{  \nabla_{\mathcal{T}}v\right\}  \right)  _{\mathfrak{S}^{I}}\\
    & ~-\left(
      {\frac{\mathfrak{b}\mathfrak{h}}{\operatorname*{i}\mathfrak{p}}}%
      \llbr
      \nabla_{\mathcal{T}}\left( u-u_{\mathcal{T}}\right)
      \rrbr
      _{N},%
      \llbr
      \nabla_{\mathcal{T}}v%
      \rrbr
      _{N}\right) _{\mathfrak{S}^{I}}+\left(
      \operatorname*{i}{\mathfrak{a}%
        \frac{\mathfrak{p}^{2}}{\mathfrak{h}}}%
      \llbr
      u-u_{\mathcal{T}}%
      \rrbr
      _{N},%
      \llbr
      v%
      \rrbr
      _{N}\right)  _{\mathfrak{S}^{I}}\\
    & ~+\left( \left(
        1-{\mathfrak{d}\frac{k\mathfrak{h}}{\mathfrak{p}}%
        }\right) \left(
        g-\partial_{\mathbf{n}}u_{\mathcal{T}}-\operatorname*{i}%
        ku_{\mathcal{T}}\right) \mathbf{,}v\right)
    _{\partial\Omega}-\left(
      {\frac{\mathfrak{d}\mathfrak{h}}{\operatorname*{i}\mathfrak{p}}}\left(
        g-\partial_{\mathbf{n}}u_{\mathcal{T}}-\operatorname*{i}ku_{\mathcal{T}%
        }\right) ,\nabla_{\mathcal{T}}v\cdot\mathbf{n}\right)
    _{\partial\Omega}.
  \end{align*}
  The regularity of the solution $u\in H^{3/2+\varepsilon}\left(
    \Omega\right) $ for some $\varepsilon>0$ implies that all internal
  jumps of $u$ vanish and (\ref{4.1.1}) follows.%
  \qquad\end{proof}%
\medskip%

\begin{proof}%
  (\textit{Lemma \ref{LemReliability}}). We first assume
  $p_{\mathcal{T}}\geq 5$.\newline\textbf{Part 1. }We introduce the
  sesquilinear form $\tilde
  {a}_{\mathcal{T}}:H_{\mathcal{T}}^{1}\left( \Omega\right) \times
  H_{\mathcal{T}}^{1}\left( \Omega\right) \rightarrow\mathbb{C}$ by%
  \[
  \tilde{a}_{\mathcal{T}}\left( v,w\right) :=\left(
    \nabla_{\mathcal{T}%
    }v,\nabla_{\mathcal{T}}w\right) +\left( k^{2}v,w\right)
  +\operatorname*{i}%
  \left( kv,w\right) _{\partial\Omega}%
  \]
  and the associated norm%
  \[
  \left\Vert v\right\Vert _{\tilde{a}}:=\sqrt{\left\vert
      \tilde{a}_{\mathcal{T}%
      }\left( v,v\right) \right\vert }.
  \]
  In Part 2, we will prove%
  \begin{equation}
    \left\Vert u-u_{\mathcal{T}}\right\Vert _{\tilde{a}}\leq
    CC_{\operatorname{conf}}\left(  \eta_{R}^{2}+\eta_{J}^{2}+\eta_{E}^{2}\right)
    ^{1/2}+2\left\Vert k\left(  u-u_{\mathcal{T}}\right)  \right\Vert .
    \label{apost6}%
  \end{equation}
  The combination of%
  \[
  \frac{1}{2}\left( \left\Vert \nabla_{\mathcal{T}}v\right\Vert
    ^{2}+\left\Vert kv\right\Vert ^{2}+\left\Vert k^{1/2}v\right\Vert
    _{\partial\Omega}%
    ^{2}\right) \leq\left\Vert v\right\Vert
  _{\tilde{a}}^{2}\leq\left\Vert \nabla_{\mathcal{T}}v\right\Vert
  ^{2}+\left\Vert kv\right\Vert ^{2}+\left\Vert k^{1/2}v\right\Vert
  _{\partial\Omega}^{2}%
  \]
  with the definition of the $\operatorname{dG}$-norm leads to%
  \begin{align}
    \label{apost4}\left\Vert u-u_{\mathcal{T}}\right\Vert
    _{\operatorname{dG}}^{2} \leq & 2\left\Vert
      u-u_{\mathcal{T}}\right\Vert _{\tilde{a}}^{2} +\left\Vert
      \sqrt{{\mathfrak{d}\frac{\mathfrak{h}}{\mathfrak{p}}}}\nabla_{\mathcal{T}%
      }\left( u-u_{\mathcal{T}}\right) \cdot\mathbf{n}\right\Vert
    _{\partial
      \Omega}^{2}\nonumber\\
    & +\left\Vert
      \sqrt{{\mathfrak{b}\frac{\mathfrak{h}}{\mathfrak{p}}}}%
      \llbr
      \nabla_{\mathcal{T}}\left( u-u_{\mathcal{T}}\right)
      \rrbr
      _{N}\right\Vert _{\mathfrak{S}^{I}}^{2}+\left\Vert
      \sqrt{{\mathfrak{a}%
          \frac{\mathfrak{p}^{2}}{\mathfrak{h}}}}%
      \llbr
      \left( u-u_{\mathcal{T}}\right)
      \rrbr
      _{N}\right\Vert _{\mathfrak{S}^{I}}^{2}.
  \end{align}
  To estimate the boundary term in (\ref{apost4}), we employ $\partial
  _{\mathbf{n}}u=g-\operatorname*{i}ku$ so that for
  $e\in\mathcal{E}^{B}$ it holds%
  \begin{align*}
    \left\Vert \sqrt{{\mathfrak{d}\frac{\mathfrak{h}}{\mathfrak{p}}}}%
      \nabla_{\mathcal{T}}\left( u-u_{\mathcal{T}}\right)
      \cdot\mathbf{n}%
    \right\Vert _{e} &
    =\sqrt{\mathfrak{d}\frac{h_{e}}{p_{e}}}\left\Vert
      g-\partial_{\mathbf{n}}u_{\mathcal{T}}-\operatorname*{i}ku\right\Vert _{e}\\
    & \leq\sqrt{\mathfrak{d}\frac{h_{e}}{p_{e}}}\left\Vert g-\partial
      _{\mathbf{n}}u_{\mathcal{T}}-\operatorname*{i}ku_{\mathcal{T}}\right\Vert
    _{e}+\sqrt{\mathfrak{d}M_{\frac{\operatorname*{kh}}{\operatorname*{p}}}%
    }\left\Vert k^{1/2}\left( u-u_{\mathcal{T}}\right) \right\Vert
    _{e}.
  \end{align*}
  A summation over all $e\in\mathcal{E}^{B}$ leads to%
  \begin{align}
    \label{apostdelta}\left\Vert
      \sqrt{{\mathfrak{d}\frac{\mathfrak{h}%
          }{\mathfrak{p}}}}\nabla_{\mathcal{T}}\left(
        u-u_{\mathcal{T}}\right) \cdot\mathbf{n}\right\Vert
    _{\partial\Omega}^{2} \leq & 2\mathfrak{d}%
    \Bigg( \left\Vert \sqrt{\frac{\mathfrak{h}}{\mathfrak{p}}}\left(
        g-\partial_{\mathbf{n}}u_{\mathcal{T}}-\operatorname*{i}ku_{\mathcal{T}%
        }\right)  \right\Vert _{\partial\Omega}^{2}\nonumber\\
    & +M_{\frac{\operatorname*{kh}}{\operatorname*{p}}}\left\Vert
      k^{1/2}\left(
        u-u_{\mathcal{T}}\right)  \right\Vert _{\partial\Omega}^{2}\Bigg)\nonumber\\
    \leq &
    \frac{2\mathfrak{d}}{p_{\mathcal{T}}}\eta_{E}^{2}+2\mathfrak{d}%
    M_{\frac{\operatorname*{kh}}{\operatorname*{p}}}\left\Vert
      u-u_{\mathcal{T}%
      }\right\Vert _{\tilde{a}}^{2}.
  \end{align}
  For the inner jump terms in (\ref{apost4}) we obtain%
  \begin{subequations}
    \label{apost5}
  \end{subequations}%
  \begin{align}
    \left\Vert \sqrt{{\mathfrak{b}\frac{\mathfrak{h}}{\mathfrak{p}}}}%
      \llbr
      \nabla_{\mathcal{T}}\left( u-u_{\mathcal{T}}\right)
      \rrbr
      _{N}\right\Vert _{\mathfrak{S}^{I}} & =\left\Vert
      \sqrt{{\mathfrak{b}%
          \frac{\mathfrak{h}}{\mathfrak{p}}}}%
      \llbr
      \nabla_{\mathcal{T}}u_{\mathcal{T}}%
      \rrbr
      _{N}\right\Vert _{\mathfrak{S}^{I}}\leq\sqrt{2}\eta_{E}\tag{%
      \ref{apost5}%
      a}\label{apost5a}\\
    \left\Vert
      \sqrt{{\mathfrak{a}\frac{\mathfrak{p}^{2}}{\mathfrak{h}}}}%
      \llbr
      u-u_{\mathcal{T}}%
      \rrbr
      _{N}\right\Vert _{\mathfrak{S}^{I}} & =\left\Vert
      \sqrt{{\mathfrak{a}%
          \frac{\mathfrak{p}^{2}}{\mathfrak{h}}}}%
      \llbr
      u_{\mathcal{T}}%
      \rrbr
    \right\Vert _{\mathfrak{S}^{I}}\leq\sqrt{2}\eta_{J}, \tag{%
      \ref{apost5}%
      b}\label{apost5b}%
  \end{align}
  since the regularity assumptions on $u$ imply that the corresponding
  jump terms vanish.

  The combination of (\ref{apost4}), (\ref{apost6}),
  (\ref{apostdelta}), (\ref{apost5}) yields%
  \begin{align*}
    \left\Vert u-u_{\mathcal{T}}\right\Vert _{\operatorname{dG}}^{2} &
    \leq\left(
      2+2\mathfrak{d}M_{\frac{\operatorname*{kh}}{\operatorname*{p}}%
      }\right) \left\Vert u-u_{\mathcal{T}}\right\Vert
    _{\tilde{a}}^{2}+\left(
      2+\frac{2\mathfrak{d}}{p_{\mathcal{T}}}\right)  \eta_{E}^{2}+2\eta_{J}^{2}\\
    & \leq\left(
      2+2\mathfrak{d}M_{\frac{\operatorname*{kh}}{\operatorname*{p}}%
      }\right) \left( 2C^{2}C_{\operatorname{conf}}^{2}\left(
        \eta_{R}^{2}%
        +\eta_{J}^{2}+\eta_{E}^{2}\right) +4\left\Vert k\left(
          u-u_{\mathcal{T}%
          }\right)  \right\Vert ^{2}\right) \\
    & \quad+\left( 2+\frac{2\mathfrak{d}}{p_{\mathcal{T}}}\right)
    \eta_{E}%
    ^{2}+2\eta_{J}^{2}\\
    & \leq C\left( C_{\operatorname{conf}}^{3}\left(
        \eta_{R}^{2}+\eta_{J}%
        ^{2}+\eta_{E}^{2}\right) +C_{\operatorname{conf}}\left\Vert
        k\left( u-u_{\mathcal{T}}\right) \right\Vert ^{2}\right)
  \end{align*}
  and the assertion follows.\medskip

  \textbf{Part 2.} We will prove (\ref{apost6}). Integration by parts
  leads to%
  \begin{align*}
    \tilde{a}_{\mathcal{T}}\left( v,w\right) = & \left(
      \nabla_{\mathcal{T}%
      }v,\nabla_{\mathcal{T}}w\right) +\left( k^{2}v,w\right)
    +\operatorname*{i}%
    \left(  kv,w\right)  _{\partial\Omega}\\
    = & \left( \left( -\Delta_{\mathcal{T}}+k^{2}\right) v,w\right)
    +\sum_{K\in\mathcal{T}}\left( \partial_{\mathbf{n}_{K}}v,w\right)
    _{\partial
      K}+\operatorname*{i}\left(  kv,w\right)  _{\partial\Omega}\\
    = & \left( \left( -\Delta_{\mathcal{T}}-k^{2}\right) v,w\right)
    +2\left( k^{2}v,w\right) +\left( \left( \partial_{\mathbf{n}%
        }+\operatorname*{i}k\right)  v,w\right)  _{\partial\Omega}\\
    & +\left(
      \llbr
      \nabla_{\mathcal{T}}v%
      \rrbr
      _{N},\left\{ w\right\} \right) _{\mathfrak{S}^{I}}+\left(
      \left\{ \nabla_{\mathcal{T}}v\right\} ,%
      \llbr
      w%
      \rrbr
      _{N}\right) _{\mathfrak{S}^{I}}.
  \end{align*}
  Since $u$ is a solution of (\ref{He}) it holds
  \[
  \left( -\Delta_{\mathcal{T}}-k^{2}\right) \left(
    u-u_{\mathcal{T}}\right) =\left( \Delta_{\mathcal{T}}+k^{2}\right)
  u_{\mathcal{T}}+f ~~\text{and}%
  ~~\left( \partial_{\mathbf{n}}+\operatorname*{i}k\right) \left(
    u-u_{\mathcal{T}}\right)
  =g-\left( \partial_{\mathbf{n}}+\operatorname*{i}%
    k\right) u_{\mathcal{T}}.
  \]
  For test functions $\varphi\in H^{1}\left( \Omega\right) $ we have
  $%
  \llbr
  \varphi%
  \rrbr
  =0$, $\left\{ \varphi\right\} =\varphi$ and $u\in
  H^{3/2+\varepsilon}\left( \Omega\right) $ implies $%
  \llbr
  u%
  \rrbr
  =%
  \llbr
  \nabla_{\mathcal{T}}u%
  \rrbr
  =0$ on interior edges. Therefore%
  \begin{align}
    \tilde{a}_{\mathcal{T}}\left( u-u_{\mathcal{T}},\varphi\right) = &
    \left( \left( \Delta_{\mathcal{T}}+k^{2}\right)
      u_{\mathcal{T}}+f,\varphi\right) -\left(
      \llbr
      \nabla_{\mathcal{T}}u_{\mathcal{T}}%
      \rrbr
      _{N},\varphi\right)  _{\mathfrak{S}^{I}}\nonumber\\
    & +\left( g-\left( \partial_{\mathbf{n}}+\operatorname*{i}k\right)
      u_{\mathcal{T}},\varphi\right) _{\partial\Omega}+2\left(
      k^{2}\left( u-u_{\mathcal{T}}\right) ,\varphi\right)
    . \label{4.1.8}%
  \end{align}
  We choose $u_{\mathcal{T}}^{\ast}\in
  S_{\mathcal{T}}^{\mathfrak{p}}\cap C^{1}\left( \Omega\right) $ as
  the conforming approximant of $u_{\mathcal{T}%
  } $ as in Corollary \ref{CorConfApprox} to obtain%
  \begin{equation}
    \left\Vert u-u_{\mathcal{T}}\right\Vert _{\tilde{a}}\leq\left\Vert
      u-u_{\mathcal{T}}^{\ast}\right\Vert _{\tilde{a}}+\frac{C}{\mathfrak{a}%
    }C_{\operatorname{conf}}\left\Vert \sqrt{{\mathfrak{a}\frac{\mathfrak{p}^{2}%
          }{\mathfrak{h}}}}%
      \llbr
      u_{\mathcal{T}}%
      \rrbr
    \right\Vert _{\mathfrak{S}^{I}}. \label{4.1.9}%
  \end{equation}
  To estimate the first term in (\ref{4.1.9}) we define the set%
  \[
  \Phi:=\left\{ \varphi\in H^{1}\left( \Omega\right) \cap
    H^{3/2+\varepsilon }_{\mathcal{T}}\left( \Omega\right) :\left\Vert
      \varphi\right\Vert _{\tilde{a}}\leq1\right\} .
  \]
  Let $I_{1}^{\operatorname{hp}}:H^{1}\left( \Omega\right) \rightarrow
  S_{\mathcal{T}}^{\mathfrak{p}}\cap C^{1}\left( \Omega\right) $ be
  the interpolation operator as in Theorem \ref{TheoClQI}. Then,
  $\left( u-u_{\mathcal{T}}^{\ast}\right) /\left\Vert
    u-u_{\mathcal{T}}^{\ast }\right\Vert _{\tilde{a}}\in\Phi$ and we
  obtain again with Corollary \ref{CorConfApprox}%
  \begin{align}
    \left\Vert u-u_{\mathcal{T}}^{\ast}\right\Vert _{\tilde{a}} & \leq
    \sup_{\varphi\in\Phi}\left\vert \tilde{a}_{\mathcal{T}}\left(
        u-u_{\mathcal{T}}^{\ast},\varphi\right)  \right\vert \nonumber\\
    & \leq\sup_{\varphi\in\Phi}\left\vert
      \tilde{a}_{\mathcal{T}}\left( u-u_{\mathcal{T}},\varphi\right)
    \right\vert +\sup_{\varphi\in\Phi}\left\vert
      \tilde{a}_{\mathcal{T}}\left(
        u_{\mathcal{T}}-u_{\mathcal{T}}^{\ast}%
        ,\varphi\right)  \right\vert \nonumber\\
    & \leq\sup_{\varphi\in\Phi}\left\vert
      \tilde{a}_{\mathcal{T}}\left( u-u_{\mathcal{T}},\varphi\right)
    \right\vert +\sup_{\varphi\in\Phi}\left\Vert
      u_{\mathcal{T}}-u_{\mathcal{T}}^{\ast}\right\Vert
    _{\tilde{a}}\left\Vert
      \varphi\right\Vert _{\tilde{a}}\nonumber\\
    & \leq\sup_{\varphi\in\Phi}\Bigg\vert
    \tilde{a}_{\mathcal{T}}\left( u-u_{\mathcal{T}},\varphi\right)
    -\underset{=0}{\underbrace{a_{\mathcal{T}%
        }\left(
          u-u_{\mathcal{T}},I_{1}^{\operatorname{hp}}\varphi\right) }%
    }\Bigg\vert
    +\frac{C}{\mathfrak{a}}C_{\operatorname{conf}}\left\Vert
      \sqrt{{\mathfrak{a}\frac{\mathfrak{p}^{2}}{\mathfrak{h}}}}%
      \llbr
      u_{\mathcal{T}}%
      \rrbr
    \right\Vert _{\mathfrak{S}^{I}}. \label{4.1.10}%
  \end{align}
  Next, we use the representations (\ref{4.1.8}) of
  $\tilde{a}_{\mathcal{T}%
  }\left( u-u_{\mathcal{T}},\varphi\right) $ and (\ref{4.1.1}) of
  $a_{\mathcal{T}}\left( u-u_{\mathcal{T}},I_{1}^{\operatorname{hp}}%
    \varphi\right) $ to derive the following expression for the
  supremum in (\ref{4.1.10})%
  \begin{align}
    & \tilde{a}_{\mathcal{T}}\left( u-u_{\mathcal{T}},\varphi\right)
    -a_{\mathcal{T}}\left(
      u-u_{\mathcal{T}},I_{1}^{\operatorname{hp}}%
      \varphi\right) =\left( \left( \Delta_{\mathcal{T}}+k^{2}\right)
      u_{\mathcal{T}}+f,\varphi\right) -\left(
      \llbr
      \nabla_{\mathcal{T}}u_{\mathcal{T}}%
      \rrbr
      _{N},\varphi\right)  _{\mathfrak{S}^{I}}\nonumber\\
    & \quad+\left(
      g-\left( \partial_{\mathbf{n}}+\operatorname*{i}k\right)
      u_{\mathcal{T}},\varphi\right) _{\partial\Omega}+2\left(
      k^{2}\left(
        u-u_{\mathcal{T}}\right)  ,\varphi\right) \nonumber\\
    & \quad-\left(
      f+\Delta_{\mathcal{T}}u_{\mathcal{T}}+k^{2}u_{\mathcal{T}%
      },I_{1}^{\operatorname{hp}}\varphi\right) +\left(
      \llbr
      \nabla_{\mathcal{T}}u_{\mathcal{T}}%
      \rrbr
      _{N},\left\{ I_{1}^{\operatorname{hp}}\varphi\right\} \right)
    _{\mathfrak{S}^{I}}-\left(
      \llbr
      u_{\mathcal{T}}%
      \rrbr
      _{N},\left\{
        \nabla_{\mathcal{T}}I_{1}^{\operatorname{hp}}\varphi\right\}
    \right)  _{\mathfrak{S}^{I}}\nonumber\\
    & \quad-\left( \left(
        1-{\mathfrak{d}\frac{k\mathfrak{h}}{\mathfrak{p}}%
        }\right) \left(
        g-\partial_{\mathbf{n}}u_{\mathcal{T}}-\operatorname*{i}%
        ku_{\mathcal{T}}\right)
      ,I_{1}^{\operatorname{hp}}\varphi\right)
    _{\partial\Omega}+\left(
      {\frac{\mathfrak{d}\mathfrak{h}}{\operatorname*{i}%
          \mathfrak{p}}}\left(
        g-\partial_{\mathbf{n}}u_{\mathcal{T}}-\operatorname*{i}%
        ku_{\mathcal{T}}\right)
      ,\partial_{\mathbf{n}}I_{1}^{\operatorname{hp}%
      }\varphi\right)  _{\partial\Omega}\nonumber\\
    & \quad+\left(
      \operatorname*{i}{\mathfrak{a}\frac{\mathfrak{p}^{2}%
        }{\mathfrak{h}}}%
      \llbr
      u_{\mathcal{T}}%
      \rrbr
      _{N},%
      \llbr
      I_{1}^{\operatorname{hp}}\varphi%
      \rrbr
      _{N}\right) _{\mathfrak{S}^{I}}-\left(
      {\frac{\mathfrak{b}\mathfrak{h}%
        }{\operatorname*{i}\mathfrak{p}}}%
      \llbr
      \nabla_{\mathcal{T}}u_{\mathcal{T}}%
      \rrbr
      _{N},%
      \llbr
      \nabla_{\mathcal{T}}I_{1}^{\operatorname{hp}}\varphi%
      \rrbr
      _{N}\right)  _{\mathfrak{S}^{I}}\nonumber\\
    & =\left( \left( \Delta_{\mathcal{T}}+k^{2}\right) u_{\mathcal{T}%
      }+f,\varphi-I_{1}^{\operatorname{hp}}\varphi\right) +2\left(
      k^{2}\left( u-u_{\mathcal{T}}\right) ,\varphi\right) -\left(
      \llbr
      \nabla_{\mathcal{T}}u_{\mathcal{T}}%
      \rrbr
      _{N},\varphi-I_{1}^{\operatorname{hp}}\varphi\right)
    _{\mathfrak{S}^{I}%
    }\nonumber\\
    & \quad+\left(
      g-\left( \partial_{\mathbf{n}}+\operatorname*{i}k\right)
      u_{\mathcal{T}},\varphi-I_{1}^{\operatorname{hp}}\varphi\right)
    _{\partial\Omega}+\left(
      {\mathfrak{d}\frac{k\mathfrak{h}}{\mathfrak{p}}%
      }\left(
        g-\partial_{\mathbf{n}}u_{\mathcal{T}}-\operatorname*{i}%
        ku_{\mathcal{T}}\right)
      ,I_{1}^{\operatorname{hp}}\varphi\right)
    _{\partial\Omega}\nonumber\\
    & \quad+\left( {\frac{\mathfrak{d}\mathfrak{h}}{\operatorname*{i}%
          \mathfrak{p}}}\left(
        g-\partial_{\mathbf{n}}u_{\mathcal{T}}-\operatorname*{i}%
        ku_{\mathcal{T}}\right)
      ,\partial_{\mathbf{n}}I_{1}^{\operatorname{hp}%
      }\varphi\right) _{\partial\Omega}-\left(
      \llbr
      u_{\mathcal{T}}%
      \rrbr
      _{N},\nabla_{\mathcal{T}}I_{1}^{\operatorname{hp}}\varphi\right)
    _{\mathfrak{S}^{I}}. \label{equalsign}%
  \end{align}
  We denote the terms after the equal sign in (\ref{equalsign}) by
  $T_{1}%
  ,\ldots,T_{7}$ and separately estimate them in the sequel. The
  constants $C$ only depend on $\mathfrak{b}$, $\mathfrak{d}$ in
  (\ref{dgsesquilin}), the shape regularity of the mesh, and the
  constant $C$ in (\ref{C1Clement}).

  @$T_{1}:$%
  \[
  \left\vert \left( \left( \Delta_{\mathcal{T}}+k^{2}\right)
      u_{\mathcal{T}%
      }+f,\varphi-I_{1}^{\operatorname{hp}}\varphi\right) \right\vert
  \overset{\text{(\ref{C1Clementa})}}{\leq}C\left\Vert
    \frac{\mathfrak{h}%
    }{\mathfrak{p}}\left( \Delta_{\mathcal{T}}u_{\mathcal{T}}+k^{2}%
      u_{\mathcal{T}}+f\right) \right\Vert \left\Vert
    \nabla\varphi\right\Vert .
  \]

  @$T_{3}:$
  \begin{align}
    \left\vert \left(
        \llbr
        \nabla_{\mathcal{T}}u_{\mathcal{T}}%
        \rrbr
        _{N},\varphi-I_{1}^{\operatorname{hp}}\varphi\right)
      _{\mathfrak{S}^{I}%
      }\right\vert & \leq\left\Vert
      \sqrt{{\mathfrak{b}\frac{\mathfrak{h}%
          }{\mathfrak{p}}}}%
      \llbr
      \nabla_{\mathcal{T}}u_{\mathcal{T}}%
      \rrbr
      _{N}\right\Vert _{\mathfrak{S}^{I}}\left(
      \sum_{e\in\mathcal{E}^{I}}%
      \frac{p_{e}}{\mathfrak{b}h_{e}}\left\Vert \left( \varphi-I_{1}%
          ^{\operatorname{hp}}\varphi\right) \right\Vert
      _{e}^{2}\right)
    ^{1/2}\label{tracephiIhpphi}\\
    & \overset{\text{(\ref{C1Clementb})}}{\leq}C\left\Vert
      \sqrt{{\mathfrak{b}%
          \frac{\mathfrak{h}}{\mathfrak{p}}}}%
      \llbr
      \nabla_{\mathcal{T}}u_{\mathcal{T}}%
      \rrbr
      _{N}\right\Vert _{\mathfrak{S}^{I}}\left\Vert
      \nabla\varphi\right\Vert .\nonumber
  \end{align}

  @$T_{7}:$ Using $H^{1}$-stability of $I_{1}^{\operatorname*{hp}}$,
  we obtain
  \begin{align}
    \left\vert \left(
        \llbr
        u_{\mathcal{T}}%
        \rrbr
        _{N},\nabla_{\mathcal{T}}I_{1}^{\operatorname{hp}}\varphi\right)
      _{\mathfrak{S}^{I}}\right\vert &
    \leq\frac{1}{\sqrt{\mathfrak{a}}}\left\Vert
      \sqrt{{\mathfrak{a}\frac{\mathfrak{p}^{2}}{\mathfrak{h}}}}%
      \llbr
      u_{\mathcal{T}}%
      \rrbr
      _{N}\right\Vert _{\mathfrak{S}^{I}}\left\Vert
      \frac{\sqrt{\mathfrak{h}}%
      }{\mathfrak{p}}\nabla_{\mathcal{T}}I_{1}^{\operatorname{hp}}\varphi\right\Vert
    _{\mathfrak{S}^{I}}\nonumber\\
    & \leq C_{1}\left\Vert \sqrt{{\mathfrak{a}\frac{\mathfrak{p}^{2}%
          }{\mathfrak{h}}}}%
      \llbr
      u_{\mathcal{T}}%
      \rrbr
      _{N}\right\Vert _{\mathfrak{S}^{I}}\left\Vert
      \nabla_{\mathcal{T}}%
      I_{1}^{\operatorname{hp}}\varphi\right\Vert \label{edgeinvest}\\
    & \leq C_{1}C_{2}\left\Vert
      \sqrt{{\mathfrak{a}\frac{\mathfrak{p}^{2}%
          }{\mathfrak{h}}}}%
      \llbr
      u_{\mathcal{T}}%
      \rrbr
      _{N}\right\Vert _{\mathfrak{S}^{I}}\left\Vert
      \nabla\varphi\right\Vert ,\nonumber
  \end{align}
  where $C_{1}$ depends on the constant in an $hp$-explicit inverse
  estimate for polynomials (see \cite[Thm. 4.76]{SchwabhpBook}).

  @$T_{4}$:%
  \[
  \left\vert \left(
      g-\left( \partial_{\mathbf{n}}+\operatorname*{i}k\right)
      u_{\mathcal{T}},\varphi-I_{1}^{\operatorname{hp}}\varphi\right)
    _{\partial\Omega}\right\vert
  \overset{\text{(\ref{C1Clementb})}}{\leq }C\left\Vert
    \sqrt{\frac{\mathfrak{h}}{\mathfrak{p}}}\left( g-\left(
        \partial_{\mathbf{n}}+\operatorname*{i}k\right)
      u_{\mathcal{T}}\right) \right\Vert _{\partial\Omega}\left\Vert
    \nabla\varphi\right\Vert .
  \]

  @$T_{5}$: We use $\mathfrak{d}k^{1/2}h_{e}/p_{e}\leq\mathfrak{d}%
  M_{\frac{\operatorname*{kh}}{\operatorname*{p}}}^{1/2}\left( h_{e}%
    /p_{e}\right) ^{1/2}$ and obtain%
  \begin{align*}
    \left\Vert {\mathfrak{d}\frac{k\mathfrak{h}}{\mathfrak{p}}} I_{1}%
      ^{\operatorname{hp}}\varphi\right\Vert _{e} & \leq\left\Vert
      {\mathfrak{d}%
        \frac{k\mathfrak{h}}{\mathfrak{p}}}\varphi\right\Vert
    _{e}+\left\Vert
      {\mathfrak{d}\frac{k\mathfrak{h}}{\mathfrak{p}}}\left(
        \varphi-I_{1}%
        ^{\operatorname{hp}}\varphi\right)  \right\Vert _{e}\\
    & \overset{\text{(\ref{C1Clementb})}}{\leq}C\mathfrak{d}\left(
      \frac{h_{e}%
      }{p_{e}}\right) ^{1/2}\left(
      M_{\frac{\operatorname*{kh}}{\operatorname*{p}%
        }}^{1/2}\left\Vert k^{1/2}\varphi\right\Vert _{L^{2}\left(
          e\right)
      }+M_{\frac{\operatorname*{kh}}{\operatorname*{p}}}\left\Vert
        \nabla \varphi\right\Vert _{L^{2}\left( \omega_{e}\right)
      }\right) .
  \end{align*}
  This leads to
  \begin{align*}
    \left\vert T_{5}\right\vert & \leq2C\mathfrak{d}\left( 1+M_{\frac
        {\operatorname*{kh}}{\operatorname*{p}}}\right) \left\Vert
      \sqrt {\frac{\mathfrak{h}}{\mathfrak{p}}}\left(
        g-\partial_{\mathbf{n}%
        }u_{\mathcal{T}}-\operatorname*{i}ku_{\mathcal{T}}\right)
    \right\Vert _{\partial\Omega}\left( \left\Vert
        k^{1/2}\varphi\right\Vert _{\partial
        \Omega}+\left\Vert \nabla\varphi\right\Vert \right) \\
    & {\leq}\tilde{C}\mathfrak{d}\left( 1+M_{\frac{\operatorname*{kh}%
        }{\operatorname*{p}}}\right) \left\Vert
      \sqrt{\frac{\mathfrak{h}%
        }{\mathfrak{p}}}\left( g-\partial_{\mathbf{n}}u_{\mathcal{T}}%
        -\operatorname*{i}ku_{\mathcal{T}}\right) \right\Vert
    _{\partial\Omega }\left\Vert \varphi\right\Vert _{\tilde a}.
  \end{align*}

  @$T_{6}$: We obtain similarly as in (\ref{edgeinvest})%
  \begin{align*}
    \left\vert \left(
        {\frac{\mathfrak{d}\mathfrak{h}}{\operatorname*{i}%
            \mathfrak{p}}}\left(
          g-\partial_{\mathbf{n}}u_{\mathcal{T}}-\operatorname*{i}%
          ku_{\mathcal{T}}\right)
        ,\partial_{\mathbf{n}}I_{1}^{\operatorname{hp}%
        }\varphi\right) _{\partial\Omega}\right\vert & \leq
    C\mathfrak{d}\left\Vert
      g-\partial_{\mathbf{n}}u_{\mathcal{T}}-\operatorname*{i}ku_{\mathcal{T}%
      }\right\Vert _{\partial\Omega}\left\Vert
      \frac{\mathfrak{h}}{\mathfrak{p}%
      }\partial_{\mathbf{n}}I_{1}^{\operatorname{hp}}\varphi\right\Vert
    _{\partial\Omega}\\
    & \leq C\mathfrak{d}\left\Vert \sqrt{\mathfrak{h}}\left(
        g-\partial
        _{\mathbf{n}}u_{\mathcal{T}}-\operatorname*{i}ku_{\mathcal{T}}\right)
    \right\Vert _{\partial\Omega}\left\Vert \nabla\varphi\right\Vert .
  \end{align*}
  These estimates allow to bound the expression in the supremum of
  (\ref{4.1.10}) by%
  \begin{align*}
    & \left\vert \tilde{a}_{\mathcal{T}}\left(
        u-u_{\mathcal{T}},\varphi\right) -a_{\mathcal{T}}\left(
        u-u_{\mathcal{T}},I_{1}^{\operatorname{hp}}%
        \varphi\right) \right\vert \leq2\left\Vert k\left(
        u-u_{\mathcal{T}}\right)
    \right\Vert \left\Vert k\varphi\right\Vert \\
    & \qquad+C\Bigg( \left\Vert
      \frac{\mathfrak{h}}{\mathfrak{p}}\left(
        \Delta_{\mathcal{T}}u_{\mathcal{T}}+k^{2}u_{\mathcal{T}}+f\right)
    \right\Vert \left\Vert \nabla\varphi\right\Vert +\left\Vert
      \sqrt{{\mathfrak{b}%
          \frac{\mathfrak{h}}{\mathfrak{p}}}}%
      \llbr
      \nabla_{\mathcal{T}}u_{\mathcal{T}}%
      \rrbr
      _{N}\right\Vert _{\mathfrak{S}^{I}}\left\Vert \nabla\varphi\right\Vert \\
    & \qquad+\left\Vert \sqrt{\frac{\mathfrak{h}}{\mathfrak{p}}}\left(
        g-\left(
          \partial_{\mathbf{n}}+\operatorname*{i}k\right)
        u_{\mathcal{T}}\right)
    \right\Vert _{\partial\Omega}\left\Vert \nabla\varphi\right\Vert \\
    & \qquad+\left(
      1+M_{\frac{\operatorname*{kh}}{\operatorname*{p}}}\right)
    \left\Vert \sqrt{\frac{\mathfrak{h}}{\mathfrak{p}}}\left(
        g-\partial
        _{\mathbf{n}}u_{\mathcal{T}}-\operatorname*{i}ku_{\mathcal{T}}\right)
    \right\Vert _{\partial\Omega}\left\Vert \varphi\right\Vert _{\tilde a}\\
    & \qquad+\left\Vert \sqrt{\mathfrak{h}}\left(
        g-\partial_{\mathbf{n}%
        }u_{\mathcal{T}}-\operatorname*{i}ku_{\mathcal{T}}\right)
    \right\Vert _{\partial\Omega}\left\Vert \nabla\varphi\right\Vert
    +\left\Vert
      \sqrt{{\mathfrak{a}\frac{\mathfrak{p}^{2}}{\mathfrak{h}}}}%
      \llbr
      u_{\mathcal{T}}%
      \rrbr
      _{N}\right\Vert _{\mathfrak{S}^{I}}\left\Vert
      \nabla\varphi\right\Vert \Bigg) .
  \end{align*}
  The combination of (\ref{4.1.9}), (\ref{4.1.10}) with the
  definitions of $\eta_{R}$, $\eta_{E}$, $\eta_{J}$ leads to%
  \begin{align*}
    \left\Vert u-u_{\mathcal{T}}\right\Vert _{\tilde{a}} &
    \leq2\left\Vert k\left( u-u_{\mathcal{T}}\right) \right\Vert
    +C\Bigg( \left\Vert \frac{\mathfrak{h}}{\mathfrak{p}}\left(
        \Delta_{\mathcal{T}}u_{\mathcal{T}%
        }+k^{2}u_{\mathcal{T}}+f\right)  \right\Vert \\
    & \quad+C_{\operatorname{conf}}\left\Vert \sqrt{{\mathfrak{a}\frac
          {\mathfrak{p}^{2}}{\mathfrak{h}}}}%
      \llbr
      u_{\mathcal{T}}%
      \rrbr
    \right\Vert _{ \mathfrak{S}^{I} }+\left\Vert \sqrt{{\mathfrak{b}%
          \frac{\mathfrak{h}}{\mathfrak{p}}}}%
      \llbr
      \nabla_{\mathcal{T}}u_{\mathcal{T}}%
      \rrbr
      _{N}\right\Vert _{\mathfrak{S}^{I}}\\
    & \quad+\left(
      1+M_{\frac{\operatorname*{kh}}{\operatorname*{p}}}\right)
    \left\Vert \sqrt{\mathfrak{h}}\left(
        g-\partial_{\mathbf{n}}u_{\mathcal{T}%
        }-\operatorname*{i}ku_{\mathcal{T}}\right) \right\Vert
    _{\partial\Omega
    }\Bigg)\\
    & \leq CC_{\operatorname{conf}}\left(
      \eta_{R}^{2}+\eta_{E}^{2}+\eta_{J}%
      ^{2}\right) ^{1/2}+2k\left\Vert u-u_{\mathcal{T}}\right\Vert .
  \end{align*}
  This concludes the proof for $p_{\mathcal{T}}\geq5$.

  For $1\leq p_{\mathcal{T}}<5$ we have to employ
  $I_{1}^{\operatorname{hp},0}$ instead of $I_{1}^{\operatorname{hp}}$
  (cf. Theorem \ref{TheoClQI}). For the details of this case we refer
  to \cite[Rem. 4.1.4]{ZechMaster}.%
  \qquad\end{proof}%

To prove the reliability estimate it remains to bound the term
$\left\Vert k\left( u-u_{\mathcal{T}}\right) \right\Vert $ by the
estimator. We will show that $\left\Vert k\left(
    u-u_{\mathcal{T}}\right) \right\Vert $ is bounded (modulo
constants) by the product of $\eta\left( u_{\mathcal{T}%
  }\right) $ with the adjoint approximation property
$\sigma_{k}^{\star}\left( S\right) $ (see (\ref{defsigmastar})).

\begin{lemma}
  \label{Lemabsorb}Let the assumptions of Lemma \ref{LemReliability}
  be satisfied. There exists a constant $C$ solely depending on
  $\rho_{\mathcal{T}%
  }$, $\mathfrak{b}$, $\mathfrak{d}$, and $\Omega$ such that%
  \[
  \left\Vert k\left( u-u_{\mathcal{T}}\right) \right\Vert \leq
  C\eta\left( u_{\mathcal{T}}\right) \sigma_{k}^{\ast}\left(
    S_{\mathcal{T}}%
    ^{\mathfrak{p}}\right)
  \]
  with $\sigma_{k}^{\ast}\left( S_{\mathcal{T}}^{\mathfrak{p}}\right)
  $ as in (\ref{defsigmastar}).
\end{lemma}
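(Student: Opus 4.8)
The plan is to run a duality (Aubin--Nitsche) argument with the adjoint solution operator $Q_{k}^{\star}$, set up precisely so that the $\operatorname{dG}^{+}$-approximation of the dual solution produces the factor $\sigma_{k}^{\star}$. Write $e:=u-u_{\mathcal{T}}$ and, assuming $\left\Vert ke\right\Vert\neq 0$ (otherwise nothing is to prove), put $z:=Q_{k}^{\star}(k^{2}e)\in H^{3/2+\varepsilon}(\Omega)$, the solution of the adjoint problem (\ref{adjproblem}) with right-hand side $w=k^{2}e\in L^{2}(\Omega)$. Since $u\in H^{3/2+\varepsilon}(\Omega)$ and $u_{\mathcal{T}}\in S_{\mathcal{T}}^{\mathfrak{p}}$ we have $e\in H_{\mathcal{T}}^{3/2+\varepsilon}(\Omega)$, so Lemma \ref{LemAdjCons} applied with the test function $e$ gives
\[
\left\Vert ke\right\Vert^{2}=(e,k^{2}e)=a_{\mathcal{T}}(e,z).
\]
Because $u$ solves (\ref{He}) and $u_{\mathcal{T}}$ solves (\ref{dgsesquilin}), the $\operatorname{dG}$-formulation is consistent and we have the Galerkin orthogonality $a_{\mathcal{T}}(e,v_{\mathcal{T}})=0$ for all $v_{\mathcal{T}}\in S_{\mathcal{T}}^{\mathfrak{p}}$; hence $\left\Vert ke\right\Vert^{2}=a_{\mathcal{T}}(e,z-v_{\mathcal{T}})$ for every $v_{\mathcal{T}}\in S_{\mathcal{T}}^{\mathfrak{p}}$.

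The heart of the proof is then the residual-continuity bound $\left\vert a_{\mathcal{T}}(e,w)\right\vert\le C\,\eta(u_{\mathcal{T}})\left\Vert w\right\Vert_{\operatorname{dG}^{+}}$ for $w=z-v_{\mathcal{T}}$, which I would obtain by inserting $w$ into the representation (\ref{4.1.1}) of Lemma \ref{LemAltRepA} and estimating the terms one by one. As $z\in H^{3/2+\varepsilon}(\Omega)$ is single-valued, the interior jumps $\llbr z\rrbr$ and $\llbr\nabla_{\mathcal{T}}z\rrbr_{N}$ vanish, so the jump contributions of $w$ reduce to those of $v_{\mathcal{T}}$. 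The terms pairing the jumps $\llbr u_{\mathcal{T}}\rrbr_{N}$, $\llbr\nabla_{\mathcal{T}}u_{\mathcal{T}}\rrbr_{N}$ of the discrete solution against the jumps of $w$ are bounded by direct Cauchy--Schwarz, yielding $\eta_{J}$ and $\eta_{E}$ times the corresponding $\operatorname{dG}$-components of $w$. The decisive term is $(\llbr u_{\mathcal{T}}\rrbr_{N},\{\nabla_{\mathcal{T}}w\})_{\mathfrak{S}^{I}}$, which is controlled by $\sqrt{2}\,\eta_{J}\,\Vert(\mathfrak{a}\mathfrak{p}^{2}/\mathfrak{h})^{-1/2}\{\nabla_{\mathcal{T}}w\}\Vert_{\mathfrak{S}^{I}}$; the last factor is exactly the extra summand of $\left\Vert w\right\Vert_{\operatorname{dG}^{+}}$ over $\left\Vert w\right\Vert_{\operatorname{dG}}$ in (\ref{dgnormsb}), which is the structural reason the $\operatorname{dG}^{+}$-norm, and not the $\operatorname{dG}$-norm, must appear in $\sigma_{k}^{\star}$. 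The volume residual, the mean-trace term $(\llbr\nabla_{\mathcal{T}}u_{\mathcal{T}}\rrbr_{N},\{w\})_{\mathfrak{S}^{I}}$, and the boundary-residual terms are the delicate ones and are discussed below.

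Granting this bound, taking the infimum over $v_{\mathcal{T}}\in S_{\mathcal{T}}^{\mathfrak{p}}$ yields
\[
\left\Vert ke\right\Vert^{2}\le C\,\eta(u_{\mathcal{T}})\inf_{v_{\mathcal{T}}\in S_{\mathcal{T}}^{\mathfrak{p}}}\left\Vert z-v_{\mathcal{T}}\right\Vert_{\operatorname{dG}^{+}}.
\]
Since $z=Q_{k}^{\star}(k^{2}e)$, the definition (\ref{defsigmastar}) of $\sigma_{k}^{\star}$ with the choice $g=e$ (for which $\left\Vert kg\right\Vert=\left\Vert ke\right\Vert$) gives $\inf_{v_{\mathcal{T}}}\left\Vert z-v_{\mathcal{T}}\right\Vert_{\operatorname{dG}^{+}}\le\sigma_{k}^{\star}(S_{\mathcal{T}}^{\mathfrak{p}})\left\Vert ke\right\Vert$, so that $\left\Vert ke\right\Vert^{2}\le C\,\eta(u_{\mathcal{T}})\,\sigma_{k}^{\star}(S_{\mathcal{T}}^{\mathfrak{p}})\left\Vert ke\right\Vert$; dividing by $\left\Vert ke\right\Vert$ gives the assertion.

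The main obstacle is the per-term estimate of the second paragraph, concretely the weight bookkeeping for the volume and boundary residuals. The $\operatorname{dG}^{+}$-norm supplies only $\left\Vert\nabla_{\mathcal{T}}w\right\Vert$ and $\left\Vert kw\right\Vert$ for the bulk, whereas $\eta_{R}$ carries the weight $\mathfrak{h}/\mathfrak{p}$, so a naive pairing $(\,\cdot\,,w)$ asks for $\Vert(\mathfrak{p}/\mathfrak{h})w\Vert$, which is not in the norm once $\mathfrak{p}/\mathfrak{h}>k$. Pairing against $z$ itself instead of $z-v_{\mathcal{T}}$ would only reproduce the raw adjoint stability constant (of size $\sim k$) and destroy $k$-robustness; this is precisely what the $\sigma_{k}^{\star}$-formulation must avoid. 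The point is therefore to keep the approximation error $w=z-v_{\mathcal{T}}$ throughout and to bound its weighted volume and trace norms by $\left\Vert w\right\Vert_{\operatorname{dG}^{+}}$ up to the adjoint-approximation constant. Here I would exploit $z\in H^{3/2+\varepsilon}(\Omega)$ together with the $hp$-explicit approximation results of Appendix \ref{AApprox}, choosing $v_{\mathcal{T}}$ as the quasi-interpolant there so that the $\mathfrak{h}/\mathfrak{p}$-weighted volume and trace errors of $z-v_{\mathcal{T}}$ share the optimal rate with $\left\Vert z-v_{\mathcal{T}}\right\Vert_{\operatorname{dG}^{+}}$ and are thus matched against $\sigma_{k}^{\star}(S_{\mathcal{T}}^{\mathfrak{p}})\left\Vert ke\right\Vert$; once this is secured the estimate closes by the single division above.
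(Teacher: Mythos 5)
Your overall architecture coincides with the paper's: duality with $z=Q_{k}^{\star}\left(k^{2}e\right)$, consistency and Galerkin orthogonality to replace $z$ by $z-v_{\mathcal{T}}$, a continuity bound for the residual functional in the $\operatorname{dG}^{+}$-norm, and the definition (\ref{defsigmastar}) to close the estimate; your observation that the term $\left(\llbr u_{\mathcal{T}}\rrbr_{N},\left\{\nabla_{\mathcal{T}}w\right\}\right)_{\mathfrak{S}^{I}}$ is the structural reason the $\operatorname{dG}^{+}$-norm must appear is also exactly the paper's point. The genuine gap is the load-bearing step you defer to your last paragraph: the bound $\left\vert a_{\mathcal{T}}\left(e,z-v_{\mathcal{T}}\right)\right\vert\leq C\eta\left\Vert z-v_{\mathcal{T}}\right\Vert_{\operatorname{dG}^{+}}$ for the volume, mean-trace and boundary residual terms. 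Your proposed fix --- take $v_{\mathcal{T}}=I_{1}^{\operatorname{hp}}z$ from Appendix \ref{AApprox} --- does not close. Estimate (\ref{C1Clementa}) with $n=1$ gives $\left\Vert (\mathfrak{p}/\mathfrak{h})\left(z-I_{1}^{\operatorname{hp}}z\right)\right\Vert\leq C\left\Vert\nabla z\right\Vert$ (and (\ref{C1Clementb}) behaves analogously on traces), i.e.\ it bounds the weighted interpolation errors by seminorms of $z$ itself, not by $\left\Vert z-v_{\mathcal{T}}\right\Vert_{\operatorname{dG}^{+}}$; since the a priori bound for the adjoint problem only yields $\left\Vert\nabla z\right\Vert\lesssim k\left\Vert ke\right\Vert$, this reinstates precisely the factor $k$ that you correctly say must be avoided. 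Moreover, once $v_{\mathcal{T}}$ is pinned to the quasi-interpolant you forfeit the infimum that produces $\sigma_{k}^{\star}$: nothing in Appendix \ref{AApprox} relates $z-I_{1}^{\operatorname{hp}}z$ to $\inf_{v}\left\Vert z-v\right\Vert_{\operatorname{dG}^{+}}$.

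The paper resolves this with a two-level subtraction that is absent from your proposal. It first proves (\ref{LemabsorbPart1}): for a conforming test function $\varphi$ one has $a_{\mathcal{T}}\left(e,\varphi\right)=a_{\mathcal{T}}\left(e,\varphi-I_{1}^{\operatorname{hp}}\varphi\right)$, and interpolating the \emph{test function} (not $z$) lets every problematic weight land on the interpolation error of $\varphi$, which is absorbed into $\left\Vert\nabla\varphi\right\Vert\leq\left\Vert\varphi\right\Vert_{\operatorname{dG}^{+}}$ by reusing the estimates $T_{1},\ldots,T_{7}$ from the proof of Lemma \ref{LemReliability}. This continuity bound is then applied with the test function equal to the dual best-approximation error $z-z_{S}$, whose $\operatorname{dG}^{+}$-norm is $\leq\sigma_{k}^{\star}\left(S_{\mathcal{T}}^{\mathfrak{p}}\right)\left\Vert ke\right\Vert$ by definition; the gradient that ultimately appears is therefore $\left\Vert\nabla_{\mathcal{T}}\left(z-z_{S}\right)\right\Vert$, which is small, rather than $\left\Vert\nabla z\right\Vert$, which is of order $k\left\Vert ke\right\Vert$. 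Without this mechanism your argument either loses $k$-robustness or leaves the key inequality unproved.
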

\begin{proof}%
  \textbf{Part 1. }We will prove%
  \begin{equation}
    \left\vert a_{\mathcal{T}}\left(  u-u_{\mathcal{T}},\varphi\right)
    \right\vert \leq C\eta\left(  u_{\mathcal{T}}\right)  \left\Vert
      \varphi\right\Vert _{\operatorname{dG}^{+}}\qquad\forall\varphi\in
    H^{1}\left(  \Omega\right)  \cap H_{\mathcal{T}}^{3/2+\varepsilon}\left(
      \Omega\right)  . \label{LemabsorbPart1}%
  \end{equation}
  Note that $%
  \llbr
  I_{1}^{\operatorname{hp}}\varphi%
  \rrbr
  _{N}=%
  \llbr
  \nabla I_{1}^{\operatorname{hp}}\varphi%
  \rrbr
  _{N}=0$. We employ Lemma \ref{LemAltRepA} and the estimates for
  $T_{1}%
  ,\ldots,T_{7}$ in the proof of Lemma \ref{LemReliability} to obtain%
  \begin{align}
    &\left\vert a_{\mathcal{T}}\left( u-u_{\mathcal{T}},\varphi\right)
    \right\vert = \left\vert a_{\mathcal{T}}\left( u-u_{\mathcal{T}}%
        ,\varphi-I_{1}^{\operatorname{hp}}\varphi\right)
    \right\vert \label{atabsorb}%
    \\
    &\leq C\Bigg( \left\Vert \frac{\mathfrak{h}}{\mathfrak{p}}\left(
        \Delta_{\mathcal{T}}u_{\mathcal{T}}+k^{2}u_{\mathcal{T}}+f\right)
    \right\Vert \left\Vert \nabla\varphi\right\Vert +\left\Vert
      \sqrt{{\mathfrak{b}\frac{\mathfrak{h}}{\mathfrak{p}}}}%
      \llbr
      \nabla_{\mathcal{T}}u_{\mathcal{T}}%
      \rrbr
      _{N}\right\Vert _{\mathfrak{S}^{I}}\left\Vert
      \sqrt{\frac{\mathfrak{p}%
        }{\mathfrak{b}\mathfrak{h}}} \left(
        \varphi-I_{1}^{\operatorname{hp}}%
        \varphi\right)  \right\Vert _{\mathfrak{S}^{I}}\nonumber\\
    &\quad +\left\Vert
      \sqrt{{\mathfrak{a}\frac{\mathfrak{p}^{2}}{\mathfrak{h}}}}%
      \llbr
      u_{\mathcal{T}}%
      \rrbr
      _{N}\right\Vert _{\mathfrak{S}^{I}}\left\Vert \left(
        {\mathfrak{a}%
          \frac{\mathfrak{p}^{2}}{\mathfrak{h}}}\right) ^{-1/2}\left\{
        \nabla\left( \varphi-I_{1}^{\operatorname{hp}}\varphi\right)
      \right\} \right\Vert
    _{\mathfrak{S}^{I}}\nonumber\\
    &\quad +\left\Vert \sqrt{\mathfrak{h}}\left(
        g-\partial_{\mathbf{n}%
        }u_{\mathcal{T}}-\operatorname*{i}ku_{\mathcal{T}}\right)
    \right\Vert _{\partial\Omega}\left\Vert
      \frac{\sqrt{\mathfrak{h}}}{\mathfrak{p}}%
      \partial_{\mathbf{n}}\left(
        \varphi-I_{1}^{\operatorname{hp}}\varphi\right)
    \right\Vert _{\partial\Omega}\nonumber\\
    & \quad+\left\Vert \sqrt{\frac{\mathfrak{h}}{\mathfrak{p}}}\left(
        g-
        \partial_{\mathbf{n}}u_{\mathcal{T}}-\operatorname*{i}k
        u_{\mathcal{T}}\right) \right\Vert _{\partial\Omega}\left\Vert
      \nabla\varphi\right\Vert +\left\Vert
      \sqrt{{\mathfrak{b}\frac{\mathfrak{h}}{\mathfrak{p}}}} [\![
      \nabla_{\mathcal{T}} u_{\mathcal{T}} ]\!] _{N} \right\Vert
    _{\mathfrak{S}^{I}} \left\Vert
      \sqrt{{\mathfrak{b}\frac{\mathfrak{h}}{\mathfrak{p}}}} [\![
      \nabla_{\mathcal{T}} \varphi]\!] _{N} \right\Vert
    _{\mathfrak{S}^{I}%
    }\Bigg) .\nonumber
  \end{align}
  Note that%
  \begin{align}
    \left\Vert \left(
        {\mathfrak{a}\frac{\mathfrak{p}^{2}}{\mathfrak{h}}}\right)
      ^{-1/2}\left\{ \nabla\left(
          \varphi-I_{1}^{\operatorname{hp}}\varphi\right) \right\}
    \right\Vert _{\mathfrak{S}^{I}} & \leq C \left( \left\Vert \left(
          {\mathfrak{a}\frac{\mathfrak{p}^{2}}{\mathfrak{h}}}\right)
        ^{-1/2}\left\{ \nabla\varphi\right\} \right\Vert
      _{\mathfrak{S}^{I}%
      }+\left\Vert
        \frac{\sqrt{\mathfrak{h}}}{\mathfrak{p}\sqrt{\mathfrak{a}}}\nabla
        I_{1}^{\operatorname{hp}}\varphi\right\Vert
      _{\mathfrak{S}^{I}}\right)
    \label{atabsorbaux1}\\
    & \hspace{-1.5cm} \overset{\text{(\ref{edgeinvest})}}{\leq}C\left(
      \left\Vert \left(
          {\mathfrak{a}\frac{\mathfrak{p}^{2}}{\mathfrak{h}}}\right)
        ^{-1/2}\left\{ \nabla\varphi\right\} \right\Vert
      _{\mathfrak{S}^{I}%
      }+\left\Vert \nabla\varphi\right\Vert \right) \leq C\left\Vert
      \varphi \right\Vert _{\operatorname{dG}^{+}}.\nonumber
  \end{align}
  We also use%
  \begin{align}
    \left\Vert
      \frac{\sqrt{\mathfrak{h}}}{\mathfrak{p}}\partial_{\mathbf{n}%
      }\left( \varphi-I_{1}^{\operatorname{hp}}\varphi\right)
    \right\Vert _{\partial\Omega} & \leq C\left\Vert
      \varphi\right\Vert _{\operatorname{dG}%
    }+\left\Vert
      \frac{\sqrt{\mathfrak{h}}}{\mathfrak{p}}\partial_{\mathbf{n}%
      }I_{1}^{\operatorname{hp}}\varphi\right\Vert _{\partial\Omega}%
    \label{atabsorbaux3}\\
    & \overset{\text{(\ref{edgeinvest})}}{\leq}C\left( \left\Vert
        \varphi \right\Vert _{\operatorname{dG}}+\left\Vert
        \nabla\varphi\right\Vert \right) \leq C\left\Vert
      \varphi\right\Vert _{\operatorname{dG}}.\nonumber
  \end{align}
  From the combination of (\ref{tracephiIhpphi}), (\ref{atabsorb}),
  (\ref{atabsorbaux1}), (\ref{atabsorbaux3}) with the definition of
  the error estimator we conclude that (\ref{LemabsorbPart1})
  holds.\medskip

  \textbf{Part 2.} We will derive the assertion by using
  (\ref{LemabsorbPart1}) and an Aubin-Nitsche argument. For
  $Q_{k}^{\star}$ as defined after (\ref{adjproblem}), let
  $z:=Q_{k}^{\ast}\left( k^{2}\left( u-u_{\mathcal{T}%
      }\right) \right) $. Furthermore let $z_{S}\in
  S_{\mathcal{T}}^{\mathfrak{p}%
  }$ be the best approximation of $z$ in the finite element space with
  respect to the norm $\Vert\cdot\Vert_{\operatorname{dG}^{+}}$, i.e.
  \[
  \left\Vert z-z_{S}\right\Vert _{\operatorname{dG}^{+}}=\inf_{w\in
    S_{\mathcal{T}}^{\mathfrak{p}}}\left\Vert z-w\right\Vert
  _{\operatorname{dG}%
    ^{+}}.
  \]
  With Lemma \ref{LemAdjCons} it follows
  \[
  \left\Vert k\left( u-u_{\mathcal{T}}\right) \right\Vert ^{2}=\left(
    u-u_{\mathcal{T}},k^{2}\left( u-u_{\mathcal{T}}\right) \right)
  =a_{\mathcal{T}}\left( u-u_{\mathcal{T}},z\right)
  =a_{\mathcal{T}}\left( u-u_{\mathcal{T}},z-z_{S}\right) .
  \]
  By using the adjoint approximation property (\ref{defsigmastar}) we
  get%
  \[
  \left\Vert z-z_{S}\right\Vert _{\operatorname{dG}^{+}}=\inf_{w\in
    S_{\mathcal{T}}^{\mathfrak{p}}}\left\Vert Q_{k}^{\ast}\left(
      k^{2}\left( u-u_{\mathcal{T}}\right) \right) -w\right\Vert
  _{\operatorname{dG}^{+}}%
  \leq\sigma_{k}^{\star}\left( S_{\mathcal{T}}^{\mathfrak{p}}\right)
  \left\Vert k\left( u-u_{\mathcal{T}}\right) \right\Vert .
  \]
  Employing (\ref{LemabsorbPart1}) we end up with%
  \begin{align*}
    \left\Vert k\left( u-u_{\mathcal{T}}\right) \right\Vert ^{2} &
    =a_{\mathcal{T}}\left( u-u_{\mathcal{T}},z-z_{S}\right) \leq
    C\eta\left( u_{\mathcal{T}}\right) \left\Vert z-z_{S}\right\Vert
    _{\operatorname{dG}^{+}%
    }\\
    & \leq C\eta\left( u_{\mathcal{T}}\right) \sigma_{k}^{\star}\left(
      S_{\mathcal{T}}^{\mathfrak{p}}\right) \left\Vert k\left(
        u-u_{\mathcal{T}%
        }\right) \right\Vert ,
  \end{align*}
  which implies the assertion.
  \qquad\end{proof}%

The next theorem states the reliability estimate for our a posteriori
error estimator which is explicit in the discretization parameters
$h$, $p$, and the wavenumber $k$. Its proof is a simple combination of
Lemma \ref{LemReliability} and Lemma \ref{Lemabsorb}. For later use we
define a modified error estimator where $f$ and $g$ are replaced by
projections to polynomial spaces and \textit{data oscillations}. In
order to obtain reliability \textit{and} efficiency for the
\textit{same} error estimator (up to data oscillations) we will also
state reliability for the modified error estimator in the following
theorem; the latter follows from the reliability of the original error
estimator $\eta$ (cf. \cite[Thm. 4.1.10]{ZechMaster}) via a triangle
inequality.

\begin{definition}
  For $f\in L^{2}\left( \Omega\right) $, let $f_{\mathcal{T}}$ be the
  simplex-wise polynomial function with $\left.
    f_{\mathcal{T}}\right\vert _{K}$ denoting the $L^{2}\left(
    K\right) $ orthogonal projection of $\left.  f\right\vert _{K}$
  onto $\mathbb{P}_{p_{K}}\left( K\right) $.  For $g\in
  L^{2}\left( \partial^{B}K\right) $, let $g_{\partial^{B}K}\in
  L^{2}\left( \partial^{B}K\right) $ be the edge-wise polynomial
  function with $\left.  g_{\partial^B K}\right\vert _{e}$ denoting
  the $L^{2}\left(%
    e\right) $ orthogonal projection of $\left.  g\right\vert _{e}$
  onto $\mathbb{P}_{p_{K}}\left( e\right) $.  The data oscillations
  are given for $K\in\mathcal{T}$ by%
  \[
  \operatorname*{osc}\nolimits_{K}:=\left( \left\Vert
      \frac{h_{K}}{p_{K}%
      }\left( f-f_{\mathcal{T}}\right) \right\Vert _{L^{2}\left(
        K\right) }%
    ^{2}+\left\Vert \sqrt{\mathfrak{h}}\left(
        g-g_{\partial^{B}K}\right) \right\Vert
    _{\partial^{B}K}^{2}\right) ^{1/2}
  \]
  and
  \[
  \operatorname*{osc}\nolimits_{\mathcal{T}}:=\left(
    \sum_{K\in\mathcal{T}%
    }\operatorname*{osc}\nolimits_{K}^{2}\right) ^{1/2}.
  \]
  The local error estimators $\tilde{\eta}_{K}$,
  $\tilde{\eta}_{R_{K}}$, $\tilde{\eta}_{E_{K}}$ are given by
  replacing $f$ by $f_{\mathcal{T}}$ in (\ref{locerrorestb}), $g$ by
  $g_{\partial^{B}K}$ in (\ref{locerrorestb}), and $\eta_{R_{K}}$ and
  $\eta_{E_{k}}$ by $\tilde{\eta}_{R_{K}}$ and $\tilde{\eta }_{E_{K}}$
  in (\ref{locerroresta}). The global estimators $\tilde{\eta}_{R}$,
  $\tilde{\eta}_{E}$, and $\tilde \eta$ are given by replacing
  $\eta_{R_{K}}$ and $\eta_{E_{K}}$ by $\tilde{\eta}_{R_{K}}$ and
  $\tilde{\eta}_{E_{K}}$ in (\ref{globerrorestb}) and $\eta_{R}$ and
  $\eta_{E}$ by $\tilde{\eta}_{R}$ and $\tilde{\eta}_{E}$ in
  (\ref{globerroresta}).
\end{definition}

\begin{theorem}
  \label{TheoRel}Let $\mathcal{T}$ be a shape regular, conforming
  simplicial finite element mesh of the polygonal Lipschitz domain
  $\Omega\subseteq \mathbb{R}^2$ and let the polynomial degree
  function $\mathfrak{p}$ satisfies (\ref{defrho2}) and
  $p_{\mathcal{T}}\geq 1$. Assume that $k>1$ is constant.  Let $u\in
  H^{3/2+\varepsilon}\left(\Omega\right) $ be the solution of
  (\ref{He}) for some $\varepsilon>0$ and assume that $u_{\mathcal{T}%
  }\in S_{\mathcal{T}}^{\mathfrak{p}}$ solves (\ref{dgsesquilin}) with
  $\mathfrak{a}\geq1$. %
  Then, there exists a constant $C>0$ solely depending on
  $\rho_{\mathcal{T}}$, $\mathfrak{b}$, $\mathfrak{d}$, and $\Omega$
  such that%
  \[
  \left\Vert u-u_{\mathcal{T}}\right\Vert _{\operatorname{dG}}\leq
  C\sqrt{1+M_{\frac{\operatorname*{kh}}{\operatorname*{p}}}}\left(
    1+M_{\frac{\operatorname*{kh}}{\operatorname*{p}}}+\sigma_{k}^{\ast}\left(
      S_{\mathcal{T}}^{\mathfrak{p}}\right) \right) \eta\left(
    u_{\mathcal{T}%
    }\right) .
  \]
  For the modified error estimator it holds%
  \[
  \left\Vert u-u_{\mathcal{T}}\right\Vert _{\operatorname{dG}}\leq
  C\left( 1+M_{\frac{\operatorname*{kh}}{\operatorname*{p}}}\right)
  ^{3/2}\left( 1+\sigma_{k}^{\ast}\left(
      S_{\mathcal{T}}^{\mathfrak{p}}\right) \right) \left(
    \tilde{\eta}\left( u_{\mathcal{T}}\right) +\operatorname*{osc}%
    \nolimits_{\mathcal{T}}\left( u_{\mathcal{T}}\right) \right) .
  \]
\end{theorem}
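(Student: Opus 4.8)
The plan is to combine the two preceding lemmas and then transfer the result to the modified estimator by a triangle inequality, exactly as announced in the paragraph preceding the statement.

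\emph{First estimate.} I would begin from Lemma~\ref{LemReliability}, which gives
\[
\Vert u-u_{\mathcal{T}}\Vert_{\operatorname{dG}} \le C\bigl(C_{\operatorname{conf}}^{3/2}\,\eta(u_{\mathcal{T}}) + C_{\operatorname{conf}}^{1/2}\,\Vert k(u-u_{\mathcal{T}})\Vert\bigr),
\]
and insert the absorption bound $\Vert k(u-u_{\mathcal{T}})\Vert \le C\,\eta(u_{\mathcal{T}})\,\sigma_{k}^{\ast}(S_{\mathcal{T}}^{\mathfrak{p}})$ of Lemma~\ref{Lemabsorb} into the second summand. Factoring out $C_{\operatorname{conf}}^{1/2}$ then yields
\[
\Vert u-u_{\mathcal{T}}\Vert_{\operatorname{dG}} \le C\,C_{\operatorname{conf}}^{1/2}\bigl(C_{\operatorname{conf}} + \sigma_{k}^{\ast}(S_{\mathcal{T}}^{\mathfrak{p}})\bigr)\,\eta(u_{\mathcal{T}}),
\]
and since $C_{\operatorname{conf}} = 1 + M_{\frac{\operatorname*{kh}}{\operatorname*{p}}}$ this is exactly the first claimed bound. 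The only point to check here is that the hypotheses of both lemmas are in force, namely $k$ constant, $p_{\mathcal{T}}\ge 1$, and $\mathfrak{a}$ sufficiently large; these are supplied by the assumptions of the theorem.

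\emph{Comparison of the estimators.} For the second bound I would first establish $\eta(u_{\mathcal{T}}) \le \tilde{\eta}(u_{\mathcal{T}}) + \operatorname*{osc}\nolimits_{\mathcal{T}}$. The modified estimator differs from $\eta$ only in that $f$ is replaced by $f_{\mathcal{T}}$ in the internal residual and $g$ by $g_{\partial^{B}K}$ in the boundary part of the edge residual, while the interior-edge jump part of $\eta_{E_K}$ and the trace residual $\eta_{J_K}$ are untouched. A triangle inequality in the relevant $L^2$-norms therefore gives, elementwise,
\[
\eta_{R_K} \le \tilde{\eta}_{R_K} + \frac{h_K}{p_K}\Vert f - f_{\mathcal{T}}\Vert_{L^{2}(K)}, \qquad \eta_{E_K} \le \tilde{\eta}_{E_K} + \Vert\sqrt{\mathfrak{h}}(g - g_{\partial^{B}K})\Vert_{\partial^{B}K},
\]
together with $\eta_{J_K}=\tilde{\eta}_{J_K}$. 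Assembling the three contributions by the Euclidean triangle inequality in $\mathbb{R}^{3}$, the two perturbations combine in $\ell^{2}$ precisely into $\operatorname*{osc}\nolimits_{K}$, so that $\eta_K \le \tilde{\eta}_K + \operatorname*{osc}\nolimits_{K}$; a further application of Minkowski's inequality over $K\in\mathcal{T}$ then gives the global comparison with constant one.

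\emph{Second estimate.} Finally I would insert this comparison into the first bound and simplify the prefactor using the elementary inequality $1 + M_{\frac{\operatorname*{kh}}{\operatorname*{p}}} + \sigma_{k}^{\ast} \le (1 + M_{\frac{\operatorname*{kh}}{\operatorname*{p}}})(1 + \sigma_{k}^{\ast})$, which holds because $M_{\frac{\operatorname*{kh}}{\operatorname*{p}}}\,\sigma_{k}^{\ast}\ge 0$. This bounds $C_{\operatorname{conf}}^{1/2}(C_{\operatorname{conf}} + \sigma_{k}^{\ast})$ by $(1 + M_{\frac{\operatorname*{kh}}{\operatorname*{p}}})^{3/2}(1 + \sigma_{k}^{\ast})$ and yields the second claimed estimate. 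Both steps are elementary, so I do not expect a genuine obstacle; the only care needed is bookkeeping, namely tracking the half-integer powers of $C_{\operatorname{conf}}$ and verifying that the $f$- and $g$-oscillations assemble exactly into $\operatorname*{osc}\nolimits_{K}$ without a spurious constant.
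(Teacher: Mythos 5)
Your proposal is correct and follows exactly the route the paper takes: it combines Lemma~\ref{LemReliability} with the absorption bound of Lemma~\ref{Lemabsorb} for the first estimate, and passes to the modified estimator via the triangle inequality $\eta\le\tilde{\eta}+\operatorname*{osc}_{\mathcal{T}}$ together with $1+M_{\frac{\operatorname*{kh}}{\operatorname*{p}}}+\sigma_{k}^{\ast}\le(1+M_{\frac{\operatorname*{kh}}{\operatorname*{p}}})(1+\sigma_{k}^{\ast})$. The bookkeeping of the powers of $C_{\operatorname{conf}}$ and the assembly of the $f$- and $g$-perturbations into $\operatorname*{osc}_{K}$ are carried out correctly, so nothing is missing.
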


\subsection{Efficiency}

The reliability estimate in the form of Theorem \ref{TheoRel} shows that the
error estimator (modulo a constant $C$ which only depends on $\rho
_{\mathcal{T}}$, $\mathfrak{b}$, $\mathfrak{d}$, and $\Omega$) controls the
error of the $\operatorname{dG}$-approximation $u_{\mathcal{T}}$ in a reliable
way. This estimate can be used as a stopping criterion within an adaptive
discretization process.

In this section we are concerned with the \textit{efficiency} of the error
estimator which ensures that the error estimator converges with the same rate
as the true error. Efficiency can be proved locally, i.e., the localized error
estimator is estimated by the localized error. For the proof, we employ ideas
which have been developed for conforming finite element methods in
\cite{MelenkWohlmuth} and for $\operatorname{dG}$-methods, e.g., in \cite[Thm.
3.2]{HoustonSchoetzWihl2007}. As is common for efficiency estimates one has to
deal with \textit{data oscillations}.

\begin{theorem}
Let the assumptions of Theorem \ref{TheoRel} be satisfied. There exists a constant
independent of $k$, $h_{K}$, $p_{K}$ such that the modified local internal residual
can be estimated by%
\begin{subequations}
\label{effest}
\end{subequations}%
\begin{equation}
\tilde{\eta}_{R_{K}}\leq Cp_{K}\left(  \left\Vert \nabla\left(
u-u_{\mathcal{T}}\right)  \right\Vert _{L^{2}\left(  K\right)  }%
+M_{\frac{\operatorname*{kh}}{\operatorname*{p}}}\left\Vert k\left(
u-u_{\mathcal{T}}\right)  \right\Vert _{L^{2}\left(  K\right)  }+\left\Vert
\frac{h_{K}}{p_{K}}\left(  f-f_{\mathcal{T}}\right)  \right\Vert
_{L^{2}\left(  K\right)  }\right)  .\tag{%
\ref{effest}%
a}\label{effesta}%
\end{equation}
For the gradient jumps in the error estimator it holds%
\begin{align}
\left\Vert \sqrt{{\mathfrak{b}\frac{\mathfrak{h}}{\mathfrak{p}}}}%
\llbr
\nabla_{\mathcal{T}}u_{\mathcal{T}}%
\rrbr
_{N}\right\Vert _{e}\leq &Cp_{e}^{3/2}\Bigg(  \left\Vert \nabla\left(
u-u_{\mathcal{T}}\right)  \right\Vert _{L^{2}\left(  \omega_{e}\right)
}+M_{\frac{\operatorname*{kh}}{\operatorname*{p}}}\left\Vert k\left(
u-u_{\mathcal{T}}\right)  \right\Vert _{L^{2}\left(  \omega_{e}\right)
}\tag{%
\ref{effest}%
b}\label{effestb}\\
&+\left\Vert \frac{h_{e}}{p_{e}}\left(  f-f_{p_{e}}\right)  \right\Vert
_{L^{2}\left(  \omega_{e}\right)  }\Bigg) \nonumber .
\end{align}
For the modified local edge residuals it holds%
\begin{align}
\tilde{\eta}_{E_{K}} \leq &Cp_{K}^{2}\Bigg(  \left\Vert \nabla\left(
u-u_{\mathcal{T}}\right)  \right\Vert _{L^{2}\left(  \omega_{K}\right)
}+M_{\frac{\operatorname*{kh}}{\operatorname*{p}}}\left\Vert k\left(
u-u_{\mathcal{T}}\right)  \right\Vert _{L^{2}\left(  \omega_{K}\right)
}+\left\Vert \frac{h_{K}}{p_{K}}\left(  f-f_{\mathcal{T}}\right)  \right\Vert
_{L^{2}\left(  \omega_{K}\right)  }  \tag{%
\ref{effest}%
c}\label{effestc}\\
&    +\sqrt{M_{\frac{\operatorname*{kh}}{\operatorname*{p}}}%
}\left\Vert \sqrt{\frac{k}{p_{K}}}\left(  u-u_{\mathcal{T}}\right)
\right\Vert _{\partial^{B}K}+\left\Vert \frac{h_{K}^{1/2}}{p_{K}}\left(
g-g_{\partial^{B}K}\right)  \right\Vert _{\partial^{B}K}\Bigg)  .\nonumber
\end{align}
Let $\mathfrak{a}\geq C_{\mathfrak{a}}>0$ for some sufficiently large constant
$C_{\mathfrak{a}}$ depending only on the shape regularity of the mesh. Then,
there exists a constant $C>0$ such that%
\begin{align}
\eta_{J}^{2}  \leq &C\sum_{K\in\mathcal{T}}p_{K}^{4}\Bigg(  M_{\frac
{\operatorname*{kh}}{\operatorname*{p}}}^{2}\left\Vert k\left(
u-u_{\mathcal{T}}\right)  \right\Vert _{L^{2}\left(  K\right)  }%
^{2}+\left\Vert \frac{\mathfrak{h}}{\mathfrak{p}}\left(  f-f_{\mathcal{T}%
}\right)  \right\Vert _{L^{2}\left(  K\right)  }^{2}+\left\Vert \nabla\left(
u-u_{\mathcal{T}}\right)  \right\Vert _{L^{2}\left(  K\right)  }^{2}
\tag{%
\ref{effest}%
d}\label{effestd}\\
&   +M_{\frac{\operatorname*{kh}}{\operatorname*{p}}%
}\left\Vert \sqrt{\frac{k}{p_{K}}}\left(  u-u_{\mathcal{T}}\right)
\right\Vert _{\partial^{B}K}^{2}+\left\Vert \frac{\sqrt{h_{K}}}{p_{K}}\left(
g-g_{\partial^{B}K}\right)  \right\Vert _{\partial^{B}K}^{2}\Bigg)
.\nonumber
\end{align}
\end{theorem}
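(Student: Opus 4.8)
The plan is to prove the four bounds by localized \emph{bubble function} arguments in the spirit of \cite{MelenkWohlmuth} and \cite[Thm.~3.2]{HoustonSchoetzWihl2007}, adapted to the $hp$-setting, in the order \eqref{effesta}, \eqref{effestb}, \eqref{effestc}, \eqref{effestd}, since each later estimate reuses the earlier ones. All powers of $p_K$ come from the $hp$-explicit polynomial inverse and bubble norm-equivalence inequalities (cf. \cite{SchwabhpBook}). For the internal residual \eqref{effesta} I would test the strong residual $\tilde R_K:=\Delta_{\mathcal T}u_{\mathcal T}+k^2u_{\mathcal T}+f_{\mathcal T}$ against $v_K:=b_K\tilde R_K$, with $b_K$ the interior bubble on $K$. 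Since $v_K$ vanishes on $\partial K$, integrating by parts on $K$ and using $-\Delta u-k^2u=f$ gives
\[
\int_K\tilde R_K v_K=\int_K\nabla(u-u_{\mathcal T})\cdot\nabla v_K-k^2\int_K(u-u_{\mathcal T})\,v_K-\int_K(f-f_{\mathcal T})\,v_K .
\]
Combining the bubble norm-equivalence $\|\tilde R_K\|_{L^2(K)}^2\le C p_K^{s}\int_K\tilde R_K v_K$ with the inverse estimate $\|\nabla v_K\|_{L^2(K)}\le C(p_K^2/h_K)\|\tilde R_K\|_{L^2(K)}$, dividing by $\|\tilde R_K\|_{L^2(K)}$, multiplying by $h_K/p_K$, and rewriting $\tfrac{h_K}{p_K}k^2=\tfrac{h_Kk}{p_K}\,k\le M_{\frac{\operatorname*{kh}}{\operatorname*{p}}}k$, yields \eqref{effesta}.

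For the gradient jump \eqref{effestb} I would employ an edge bubble $b_e$ on the patch $\omega_e$ and test against a polynomial lifting of $\llbr\nabla_{\mathcal T}u_{\mathcal T}\rrbr_N$ multiplied by $b_e$. Integrating by parts over the two triangles meeting at $e$ makes the edge integral of the gradient jump appear; using that the normal-gradient jump of $u$ vanishes and inserting the strong equation, the right-hand side reduces to the already-controlled internal residual plus $\|\nabla(u-u_{\mathcal T})\|_{L^2(\omega_e)}$ and the data oscillation, with the $hp$-explicit edge-bubble and extension estimates producing the factor $p_e^{3/2}$. The modified edge residual \eqref{effestc} then follows by summing \eqref{effestb} over the interior edges of $K$ and treating the boundary contribution via the Robin identity $g-\partial_{\mathbf n}u_{\mathcal T}-\operatorname*{i}ku_{\mathcal T}=\partial_{\mathbf n}(u-u_{\mathcal T})+\operatorname*{i}k(u-u_{\mathcal T})$ by an analogous boundary-bubble argument; this is where the terms $\sqrt{M_{\frac{\operatorname*{kh}}{\operatorname*{p}}}}\,\|\sqrt{k/p_K}(u-u_{\mathcal T})\|_{\partial^B K}$ and $\|h_K^{1/2}p_K^{-1}(g-g_{\partial^B K})\|_{\partial^B K}$ enter, at the cost of one further power, giving $p_K^2$.

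The jump term \eqref{effestd} is the main obstacle: a naive trace inequality bounding $\llbr u_{\mathcal T}\rrbr=\llbr u_{\mathcal T}-u\rrbr$ by $\|u-u_{\mathcal T}\|_{L^2}$ produces a factor $(k\mathfrak h/\mathfrak p)^{-2}$, which blows up on fine meshes, rather than the desired $M_{\frac{\operatorname*{kh}}{\operatorname*{p}}}^2$. The remedy is to exploit Galerkin orthogonality $a_{\mathcal T}(u-u_{\mathcal T},v)=0$ for $v\in S_{\mathcal T}^{\mathfrak p}$. I would take $v:=u_{\mathcal T}-u_{\mathcal T}^{\ast}$ with $u_{\mathcal T}^{\ast}\in S_{\mathcal T}^{\mathfrak p}\cap C^1(\Omega)$ the conforming approximant of Corollary \ref{CorConfApprox}, so that $\llbr v\rrbr_N=\llbr u_{\mathcal T}\rrbr_N$. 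Expanding $0=a_{\mathcal T}(u-u_{\mathcal T},v)$ with representation \eqref{4.1.1} of Lemma \ref{LemAltRepA} isolates the penalty term $\operatorname*{i}\bigl(\mathfrak a\tfrac{\mathfrak p^2}{\mathfrak h}\llbr u_{\mathcal T}\rrbr_N,\llbr u_{\mathcal T}\rrbr_N\bigr)_{\mathfrak S^I}$, whose modulus is a fixed multiple of $\eta_J^2$. Taking the imaginary part and bounding every remaining term by Cauchy--Schwarz, the norms of $v=u_{\mathcal T}-u_{\mathcal T}^{\ast}$ are controlled by $\eta_J$ through Corollary \ref{CorConfApprox}. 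The only delicate contribution is the cross term $(\llbr u_{\mathcal T}\rrbr_N,\{\nabla_{\mathcal T}v\})_{\mathfrak S^I}$, which is bounded (after a trace-plus-inverse estimate and Corollary \ref{CorConfApprox}) by $C\mathfrak a^{-1}\eta_J^2$ and must be absorbed into the left-hand side; this is exactly why the condition $\mathfrak a\ge C_{\mathfrak a}$ with $C_{\mathfrak a}$ depending only on the shape regularity is imposed.

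After absorption one obtains $\eta_J\le C\bigl(\tilde\eta_{R}+(\text{gradient jumps})+(\text{boundary residual})\bigr)$, and inserting the already proven bounds \eqref{effesta}--\eqref{effestc}, each carrying a factor $p_K^2$, and squaring, produces the factor $p_K^4$ together with the $M_{\frac{\operatorname*{kh}}{\operatorname*{p}}}^2$ scaling of the $L^2$-error term in \eqref{effestd}. I expect the bubble-function steps for \eqref{effesta}--\eqref{effestc} to be essentially routine once the $hp$-explicit inverse and extension constants are in place, so the real work is the orthogonality/absorption argument for \eqref{effestd} and the careful tracking of the penalty weight $\mathfrak a$ to guarantee that the cross term can be absorbed uniformly in $k$, $\mathfrak h$, and $\mathfrak p$.
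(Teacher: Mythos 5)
Your proposal follows essentially the same route as the paper: $hp$-explicit bubble-function/extension arguments in the style of Melenk--Wohlmuth for \eqref{effesta}--\eqref{effestc} (the paper only works out \eqref{effestc} in detail and cites the references for the rest), and for \eqref{effestd} the Galerkin-orthogonality identity $a_{\mathcal T}(u-u_{\mathcal T},u_{\mathcal T}-u_{\mathcal T}^{\ast})=0$ with the conforming approximant of Corollary \ref{CorConfApprox}, the representation of Lemma \ref{LemAltRepA}, and absorption of the penalty cross term for $\mathfrak a\ge C_{\mathfrak a}$. The only cosmetic deviations are that the paper takes the modulus of the penalty term rather than its imaginary part and uses the $C^{0}$ (not $C^{1}$) conforming approximant here, neither of which changes the argument.
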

\begin{proof}%
The proof of these estimates follow the ideas of \cite{MelenkWohlmuth} (see
also \cite[Proof of Thm. 4.12]{doerfler_sauter}) and are worked out in detail
in \cite[Sec. 4.2]{ZechMaster}. Here we prove exemplarily (\ref{effestc}) and
(\ref{effestd}).\medskip

\textbf{Proof of (\ref{effestc}).}

We consider the estimate for the edge residuals and start by introducing an
\textit{edge bubble function}. We define $\hat{e}:=\left[  0,1\right]  $ and
$\Phi_{\hat{e}}:\left[  0,1\right]  \rightarrow\mathbb{R}$ by $\Phi_{\hat{e}%
}\left(  x\right)  :=x\left(  1-x\right)  $. For $K\in\mathcal{T}$, let
$F_{K}:\widehat{K}\rightarrow K$ be a usual affine pullback to the reference
element $\widehat{K}:=\operatorname*{conv}\left(  \tbinom{0}{0},\tbinom{1}%
{0},\tbinom{0}{1}\right)  $. For $e\in\mathcal{E}\left(  K\right)  $, we may
choose $F_{K}$ in such a way that $F_{e}:=\left.  F_{K}\right\vert _{\hat{e}%
}:\hat{e}\rightarrow e$. Then we define $\Phi_{e}:e\rightarrow\mathbb{R}$ and
the global version $\Phi_{\mathcal{E}}:\mathfrak{S}\rightarrow\mathbb{R}$ by%
\[
\Phi_{e}:=c_{e}\Phi_{\hat{e}}\circ F_{e}^{-1}\quad\text{with\ }c_{e}%
\in\mathbb{R}\text{ such that }\int_{e}\Phi_{e}=h_{e}\quad\text{and\quad
}\forall e\in\mathcal{E}:\quad\left.  \Phi_{\mathcal{E}}\right\vert _{e}%
:=\Phi_{e}\text{.}%
\]
For $\zeta\in\left[  0,1\right]  $, we introduce%
\begin{align}
\label{defetatildezetra}\tilde{\eta}_{\zeta;E_{K}}\left(  u_{\mathcal{T}%
}\right)  :=  &  \Bigg( \frac{1}{2}\left\Vert \sqrt{{\mathfrak{b}%
\frac{\mathfrak{h}}{\mathfrak{p}}}}%
\llbr
\nabla_{\mathcal{T}}u_{\mathcal{T}}%
\rrbr
_{N}\Phi_{\mathcal{E}}^{\zeta/2}\right\Vert _{\partial^{I}K}^{2}\\
&  +\left\Vert \sqrt{\mathfrak{h}}\left(  g_{\partial^{B}K}-\partial
_{\mathbf{n}}u_{\mathcal{T}}-\operatorname*{i}ku_{\mathcal{T}}\right)
\Phi_{\mathcal{E}}^{\zeta/2}\right\Vert _{\partial^{B}K}^{2}\Bigg)^{1/2}%
\nonumber
\end{align}
and note that $\tilde{\eta}_{0,E_{K}}\left(  u_{\mathcal{T}}\right)
=\tilde{\eta}_{E_{K}}\left(  u_{\mathcal{T}}\right)  $.

For the remaining part of the proof we follow the arguments in \cite[Lem.
3.5]{MelenkWohlmuth} and consider first the second term in the right-hand side
of (\ref{defetatildezetra}). Let first $\zeta\in\left]  \frac{1}{2},1\right]
$. To estimate the second term we employ a certain extension of $\Phi
_{e}^{\zeta}$ to $K$ whose existence is proved in \cite[Lem. 2.6]%
{MelenkWohlmuth} and is stated as follows: Let $\widehat{K}$ be the reference
element and let $\hat{e}=\left[  0,1\right]  \times\left\{  0\right\}  $. Let
$\zeta\in\left]  \frac{1}{2},1\right]  $. Then there exists $C=C\left(
\zeta\right)  >0$ such that, for any $\varepsilon\in\left]  0,1\right]  $,
$p\in\mathbb{N}$, and $\hat{q}\in\mathbb{P}_{p}\left(  \hat{e}\right)  $,
there exists an extension $v_{\hat{e}}\in H^{1}( \widehat{K}) $ of $\hat
{q}\Phi_{\hat{e}}^{\zeta}$ with%
\begin{subequations}
\label{bubext}
\end{subequations}%
\begin{align}
\left.  v_{\hat{e}}\right\vert _{\hat{e}}  &  =\hat{q}\Phi_{\hat{e}}^{\zeta
}\quad\text{and\quad}\left.  v_{\hat{e}}\right\vert _{\partial\widehat
{K}\backslash\hat{e}}=0,\tag{%
\ref{bubext}%
a}\label{bubexta}\\
\left\Vert v_{\hat{e}}\right\Vert _{L^{2}\left(  \widehat{K}\right)  }^{2}  &
\leq C\varepsilon\left\Vert \hat{q}\Phi_{\hat{e}}^{\zeta/2}\right\Vert
_{\hat{e}}^{2},\tag{%
\ref{bubext}%
b}\label{bubextb}\\
\left\Vert \nabla v_{\hat{e}}\right\Vert _{L^{2}\left(  \widehat{K}\right)
}^{2}  &  \leq C\left(  \varepsilon p^{2\left(  2-\zeta\right)  }%
+\varepsilon^{-1}\right)  \left\Vert \hat{q}\Phi_{\hat{e}}^{\zeta
/2}\right\Vert _{\hat{e}}^{2}. \tag{%
\ref{bubext}%
c}\label{bubextc}%
\end{align}

For $e\subset\partial^{B}K$, choose the affine pullback $F_{K}$ such that, for
$F_{e}:=\left.  F_{K}\right\vert _{e}$, it holds $F_{e}\left(  \hat{e}\right)
=e$. We set $q:=g_{\partial^{B}K}-\partial_{\mathbf{n}}u_{\mathcal{T}%
}-\operatorname*{i}ku_{\mathcal{T}}$, denote the pullback by $\hat{q}:=q\circ
F_{e}\in\mathbb{P}_{p_{K}}$, and let $v_{\hat{e}}$ denote the above extension
for this choice of $\hat{q}$. Then $w_{e}:=v_{\hat{e}}\circ F_{K}\in
H^{1}\left(  K\right)  $ and satisfies $\left.  w_{e}\right\vert _{\partial
K\backslash e}=0$. Thus, we obtain with $\partial_{\mathbf{n}}%
u+\operatorname*{i}ku=g$ on $\partial\Omega$%
\begin{align}
\left\Vert q\Phi_{e}^{\zeta/2}\right\Vert _{e}^{2} &  =\left(  g_{\partial
^{B}K}-\partial_{\mathbf{n}}u_{\mathcal{T}}-\operatorname*{i}ku_{\mathcal{T}%
},w_{e}\right)  _{e}\nonumber\\
&  =\left(  \partial_{\mathbf{n}}\left(  u-u_{\mathcal{T}}\right)
,w_{e}\right)  _{e}+\left(  \operatorname*{i}k\left(  u-u_{\mathcal{T}%
}\right)  ,w_{e}\right)  _{e}+\left(  g_{\partial^{B}K}-g,w_{e}\right)
_{e}.\label{qphie}%
\end{align}
We estimate these terms separately and start with the last one and obtain by
using that $\Phi_{\mathcal{E}}^{\zeta/2}$ is bounded pointwise by a constant
$C>0$ uniformly in $\zeta\in\left[  0,1\right]  $ and $x\in\mathfrak{S}$%
\begin{align*}
\left(  g_{\partial^{B}K}-g,w_{e}\right)  _{e}  & \leq\left\Vert
g_{\partial^{B}K}-g\right\Vert _{e}\left\Vert w\right\Vert _{e}=\left\Vert
g_{\partial^{B}K}-g\right\Vert _{e}\left\Vert q\Phi_{e}^{\zeta}\right\Vert
_{e}\\
& \leq C\left\Vert g_{\partial^{B}K}-g\right\Vert _{e}\left\Vert q\Phi
_{e}^{\zeta/2}\right\Vert _{e}.
\end{align*}
For the second term of the right-hand side in (\ref{qphie}) we derive in a
similar fashion%
\[
\left(  \operatorname*{i}k\left(  u-u_{\mathcal{T}}\right)  ,w_{e}\right)
_{e}\leq C\left\Vert k\left(  u-u_{\mathcal{T}}\right)  \right\Vert
_{e}\left\Vert q\Phi_{e}^{\zeta/2}\right\Vert _{e}.
\]
For the first term in (\ref{qphie}) we get%
\begin{align*}
&  \left(  \partial_{\mathbf{n}}\left(  u-u_{\mathcal{T}}\right)
,w_{e}\right)  _{e}=\left(  \partial_{\mathbf{n}}\left(  u-u_{\mathcal{T}%
}\right)  ,w_{e}\right)  _{\partial K}\\
&  \qquad=\left(  \nabla\left(  u-u_{\mathcal{T}}\right)  ,\nabla
w_{e}\right)  _{L^{2}\left(  K\right)  }+\left(  \Delta\left(
u-u_{\mathcal{T}}\right)  ,w_{e}\right)  _{L^{2}\left(  K\right)  }\\
&  \qquad=\left(  \nabla\left(  u-u_{\mathcal{T}}\right)  ,\nabla
w_{e}\right)  _{L^{2}\left(  K\right)  }+\left(  k^{2}\left(  u_{\mathcal{T}%
}-u\right)  ,w_{e}\right)  _{L^{2}\left(  K\right)  }-\left(  \Delta
u_{\mathcal{T}}+k^{2}u_{\mathcal{T}}+f,w_{e}\right)  _{L^{2}\left(  K\right)
}\\
&  \qquad\leq\left\Vert \nabla\left(  u-u_{\mathcal{T}}\right)  \right\Vert
_{L^{2}\left(  K\right)  }\left\Vert \nabla w_{e}\right\Vert _{L^{2}\left(
K\right)  }+\Big(\left\Vert k^{2}\left(  u-u_{\mathcal{T}}\right)  \right\Vert
_{L^{2}\left(  K\right)  }\\
&  \qquad\quad+\left\Vert \Delta u_{\mathcal{T}}+k^{2}u_{\mathcal{T}%
}+f_{\mathcal{T}}\right\Vert _{L^{2}\left(  K\right)  }+\left\Vert
f-f_{\mathcal{T}}\right\Vert _{L^{2}\left(  K\right)  }\Big)\left\Vert
w_{e}\right\Vert _{L^{2}\left(  K\right)  }.
\end{align*}
By scaling (\ref{bubextb}), (\ref{bubextc}) to the triangle $K$ and estimating
$\left\Vert \Delta u_{\mathcal{T}}+k^{2}u_{\mathcal{T}}+f_{\mathcal{T}%
}\right\Vert _{L^{2}\left(  K\right)  }=\frac{p_{K}}{h_{K}}\tilde{\eta}%
_{R_{K}}\left(  u_{\mathcal{T}}\right)  $ via (\ref{effesta}), we get%
\begin{align*}
&  \left(  \partial_{\mathbf{n}}\left(  u-u_{\mathcal{T}}\right)
,w_{e}\right)  _{e}\leq C\left\Vert q\Phi_{e}^{\zeta/2}\right\Vert
_{e}\Bigg\{\left(  \frac{\varepsilon p_{K}^{2\left(  2-\zeta\right)
}+\varepsilon^{-1}}{h_{K}}\right)  ^{1/2}\left\Vert \nabla\left(
u-u_{\mathcal{T}}\right)  \right\Vert _{L^{2}\left(  K\right)  }\\
&  \qquad\quad+\sqrt{\varepsilon h_{K}}\left(  \left\Vert k^{2}\left(
u-u_{\mathcal{T}}\right)  \right\Vert _{L^{2}\left(  K\right)  }+\frac{p_{K}%
}{h_{K}}\tilde{\eta}_{R_{K}}\left(  u_{\mathcal{T}}\right)  +\left\Vert
f-f_{\mathcal{T}}\right\Vert _{L^{2}\left(  K\right)  }\right)  \Bigg\}\\
&  \qquad\leq C\left\Vert q\Phi_{e}^{\zeta/2}\right\Vert _{e}\Bigg\{\left(
\frac{\varepsilon p_{K}^{2\left(  2-\zeta\right)  }+\varepsilon^{-1}%
+\varepsilon p_{K}^{4}}{h_{K}}\right)  ^{1/2}\left\Vert \nabla\left(
u-u_{\mathcal{T}}\right)  \right\Vert _{L^{2}\left(  K\right)  }\\
&  \qquad\quad+\sqrt{\varepsilon h_{K}}\Big(\left\Vert k^{2}\left(
u-u_{\mathcal{T}}\right)  \right\Vert _{L^{2}\left(  K\right)  }+\frac
{p_{K}^{2}}{h_{K}}M_{\frac{\operatorname*{kh}}{\operatorname*{p}}}\left\Vert
k\left(  u-u_{\mathcal{T}}\right)  \right\Vert _{L^{2}\left(  K\right)  }\\
&  \qquad\quad+p_{K}\left\Vert \left(  f-f_{\mathcal{T}}\right)  \right\Vert
_{L^{2}\left(  K\right)  }\Big)\Bigg\}.
\end{align*}
Altogether we have proved (for the choice $\varepsilon=p_{K}^{-2}$)%
\begin{align}
&  \left\Vert \sqrt{\mathfrak{h}}q\Phi_{e}^{\zeta/2}\right\Vert _{\partial
^{B}K}^{2}\leq Cp_{K}^{2}\Bigg(\left\Vert \nabla\left(  u-u_{\mathcal{T}%
}\right)  \right\Vert _{L^{2}\left(  K\right)  }^{2}+M_{\frac
{\operatorname*{kh}}{\operatorname*{p}}}^{2}\left\Vert k\left(
u-u_{\mathcal{T}}\right)  \right\Vert _{L^{2}\left(  K\right)  }%
^{2}\label{2ndfinaledgeres}\\
&  \qquad+\left\Vert \frac{h_{K}}{p_{K}}f-f_{\mathcal{T}}\right\Vert
_{L^{2}\left(  K\right)  }^{2}+M_{\frac{\operatorname*{kh}}{\operatorname*{p}%
}}\left\Vert \sqrt{\frac{k}{p_{K}}}\left(  u-u_{\mathcal{T}}\right)
\right\Vert _{\partial^{B}K}^{2}+\left\Vert \frac{\sqrt{\mathfrak{h}}}{p_{K}%
}\left(  g_{\partial^{B}K}-g\right)  \right\Vert _{\partial^{B}K}%
^{2}\Bigg).\nonumber
\end{align}
For $\zeta\in\left[  0,1/2\right]  $ we obtain from \cite[Lem. 2.4 with
$\beta=1$ and $\alpha=\zeta$]{MelenkWohlmuth}%
\begin{equation}
\left\Vert q\Phi_{e}^{\zeta/2}\right\Vert _{e}\leq Cp_{K}^{1-\zeta}\left\Vert
q\Phi_{e}^{1/2}\right\Vert _{e}.\label{scaledgeinvin}%
\end{equation}
By choosing $\zeta=0$ in (\ref{scaledgeinvin}) and $\zeta=1$ in
(\ref{2ndfinaledgeres}) we get%
\begin{align}
\left\Vert \sqrt{\mathfrak{h}}q\right\Vert _{\partial^{B}K}\leq &
Cp_{K}\left\Vert q\Phi_{\mathcal{E}}^{1/2}\right\Vert _{\partial^{B}K}\leq
Cp_{K}^{2}\Bigg(\left\Vert \nabla\left(  u-u_{\mathcal{T}}\right)  \right\Vert
_{L^{2}\left(  K\right)  }\label{sqrthq}\\
&  +M_{\frac{\operatorname*{kh}}{\operatorname*{p}}}\left\Vert k\left(
u-u_{\mathcal{T}}\right)  \right\Vert _{L^{2}\left(  K\right)  }+\left\Vert
\frac{h_{K}}{p_{K}}f-f_{\mathcal{T}}\right\Vert _{L^{2}\left(  K\right)
}\nonumber\\
&  +\sqrt{M_{\frac{\operatorname*{kh}}{\operatorname*{p}}}}\left\Vert
\sqrt{\frac{k}{p_{K}}}\left(  u-u_{\mathcal{T}}\right)  \right\Vert
_{\partial^{B}K}+\left\Vert \frac{\sqrt{\mathfrak{h}}}{p_{K}}\left(
g_{\partial^{B}K}-g\right)  \right\Vert _{\partial^{B}K}\Bigg).\nonumber
\end{align}

This finishes the estimate of the second term in the right-hand side of
(\ref{defetatildezetra}). The first term can be estimated via (\ref{effestb})
and leads to (\ref{effestc}).\medskip

\textbf{Proof of (\ref{effestd}).}

\textbf{Part 1. }We prove%
\begin{align}
\label{etajumpproof}\eta_{J}^{2}\leq &  C\Bigg(\operatorname*{osc}%
\nolimits_{\mathcal{T}}^{2}+\tilde{\eta}_{R}^{2}+\sum_{K\in\mathcal{T}%
}\Bigg( \frac{p_{K}}{2}\left\Vert \sqrt{{\mathfrak{b}\frac{\mathfrak{h}%
}{\mathfrak{p}}}}%
\llbr
\nabla_{\mathcal{T}}u_{\mathcal{T}}%
\rrbr
_{N}\right\Vert _{\partial^{I}K}^{2}\\
&  +\left\Vert \sqrt{\mathfrak{h}}\left(  g_{\partial^{B}K}-\left(
\partial_{\mathbf{n}}+\operatorname*{i}k\right)  u_{\mathcal{T}}\right)
\right\Vert _{\partial^{B}K}^{2}\Bigg) \Bigg) .\nonumber
\end{align}
Let $u_{\mathcal{T}}^{\ast}\in S_{\mathcal{T}}^{\mathfrak{p}}$ denote the
conforming approximant of $u_{\mathcal{T}}$ (cf. Corollary \ref{CorConfApprox}%
). Due to Galerkin orthogonality it holds%
\begin{equation}
a_{\mathcal{T}}\left(  u-u_{\mathcal{T}},u_{\mathcal{T}}-u_{\mathcal{T}}%
^{\ast}\right)  =0. \label{GalOrthJump}%
\end{equation}
The continuity of $u_{\mathcal{T}}^{\ast}$ implies%
\begin{align*}
\sum_{K\in\mathcal{T}}\eta_{J_{K}}^{2}  &  =\sum_{K\in\mathcal{T}}\frac{1}%
{2}\left\Vert \sqrt{{\mathfrak{a}\frac{\mathfrak{p}^{2}}{\mathfrak{h}}}}%
\llbr
u_{\mathcal{T}}%
\rrbr
\right\Vert _{\partial^{I}K}^{2}=\left\vert \left(  \operatorname*{i}%
{\mathfrak{a}\frac{\mathfrak{p}^{2}}{\mathfrak{h}}}%
\llbr
u_{\mathcal{T}}%
\rrbr
_{N},%
\llbr
u_{\mathcal{T}}%
\rrbr
_{N}\right)  _{\mathfrak{S}^{I}}\right\vert \\
&  =\left\vert \left(  \operatorname*{i}{\mathfrak{a}\frac{\mathfrak{p}^{2}%
}{\mathfrak{h}}}%
\llbr
u_{\mathcal{T}}%
\rrbr
_{N},%
\llbr
u_{\mathcal{T}}-u_{\mathcal{T}}^{\ast}%
\rrbr
_{N}\right)  _{\mathfrak{S}^{I}}\right\vert
\end{align*}
and we combine (\ref{GalOrthJump}) with the representation as in Lemma
\ref{LemAltRepA} to obtain%
\begin{align*}
\sum_{K\in\mathcal{T}}\eta_{J_{K}}^{2}\leq &  \eta_{R}\left\Vert
\dfrac{\mathfrak{p}}{\mathfrak{h}}\left(  u_{\mathcal{T}}-u_{\mathcal{T}%
}^{\ast}\right)  \right\Vert +\left\Vert \sqrt{\mathfrak{hd}}%
\llbr
\nabla_{\mathcal{T}}u_{\mathcal{T}}%
\rrbr
_{N}\right\Vert _{\mathfrak{S}^{I}}\left\Vert \left(  \mathfrak{hd}\right)
^{-1/2}\left\{  u_{\mathcal{T}}-u_{\mathcal{T}}^{\ast}\right\}  \right\Vert
_{\mathfrak{S}^{I}}\\
&  +\left\Vert \sqrt{{\mathfrak{a}\frac{\mathfrak{p}^{2}}{\mathfrak{h}}}}%
\llbr
u_{\mathcal{T}}%
\rrbr
_{N}\right\Vert _{\mathfrak{S}^{I}}\left\Vert \left(  {\mathfrak{a}%
\frac{\mathfrak{p}^{2}}{\mathfrak{h}}}\right)  ^{-1/2}\left\{  \nabla
_{\mathcal{T}}\left(  u_{\mathcal{T}}-u_{\mathcal{T}}^{\ast}\right)  \right\}
\right\Vert _{\mathfrak{S}^{I}}\\
&  +\left\Vert \sqrt{\mathfrak{h}}\left(  g-\partial_{\mathbf{n}%
}u_{\mathcal{T}}-\operatorname*{i}ku_{\mathcal{T}}\right)  \right\Vert
_{\partial\Omega}\left\Vert \mathfrak{h}^{-1/2}\left(  u_{\mathcal{T}%
}-u_{\mathcal{T}}^{\ast}\right)  \right\Vert _{\partial\Omega}\\
&  +\left\Vert \sqrt{\mathfrak{h}}\left(  g-\partial_{\mathbf{n}%
}u_{\mathcal{T}}-\operatorname*{i}ku_{\mathcal{T}}\right)  \right\Vert
_{\partial\Omega}\left\Vert \mathfrak{d}\frac{\sqrt{\mathfrak{h}}%
}{\mathfrak{p}}\partial_{\mathbf{n}}\left(  u_{\mathcal{T}}-u_{\mathcal{T}%
}^{\ast}\right)  \right\Vert _{\partial\Omega}\\
&  +\left\Vert \sqrt{\mathfrak{bh}}%
\llbr
\nabla_{\mathcal{T}}u_{\mathcal{T}}%
\rrbr
_{N}\right\Vert _{\mathfrak{S}^{I}}\left\Vert \frac{\sqrt{\mathfrak{bh}}%
}{\mathfrak{p}}%
\llbr
\nabla_{\mathcal{T}}\left(  u_{\mathcal{T}}-u_{\mathcal{T}}^{\ast}\right)
\rrbr
_{N}\right\Vert _{\mathfrak{S}^{I}}.
\end{align*}
The factors which contain $u_{\mathcal{T}}-u_{\mathcal{T}}^{\ast}$ can be
estimated by using Theorem \ref{TheoConfApprox} and polynomial inverse
estimates
\begin{align*}
\left\Vert \frac{\mathfrak{p}}{\mathfrak{h}}\left(  u_{\mathcal{T}%
}-u_{\mathcal{T}}^{\ast}\right)  \right\Vert  &  \leq\frac{C}{\sqrt
{\mathfrak{a}}}\left\Vert \sqrt{{\mathfrak{a}\frac{\mathfrak{p}^{2}%
}{\mathfrak{h}}}}%
\llbr
u_{\mathcal{T}}%
\rrbr
\right\Vert _{\mathfrak{S}^{I}},\\
\left\Vert \frac{\left\{  \nabla_{\mathcal{T}}\left(  u_{\mathcal{T}%
}-u_{\mathcal{T}}^{\ast}\right)  \right\}  }{\sqrt{{\mathfrak{a}%
\frac{\mathfrak{p}^{2}}{\mathfrak{h}}}}}\right\Vert _{\mathfrak{S}^{I}}  &
\leq\frac{C}{\sqrt{\mathfrak{a}}}\left\Vert \nabla_{\mathcal{T}}\left(
u_{\mathcal{T}}-u_{\mathcal{T}}^{\ast}\right)  \right\Vert _{\mathfrak{S}^{I}%
}\leq\frac{C}{\mathfrak{a}}\left\Vert \sqrt{{\mathfrak{a}\frac{\mathfrak{p}%
^{2}}{\mathfrak{h}}}}%
\llbr
u_{\mathcal{T}}%
\rrbr
\right\Vert _{\mathfrak{S}^{I}},\\
\left\Vert \frac{\left\{  u_{\mathcal{T}}-u_{\mathcal{T}}^{\ast}\right\}
}{\sqrt{\mathfrak{hd}}}\right\Vert _{\mathfrak{S}^{I}}+\left\Vert
\frac{u_{\mathcal{T}}-u_{\mathcal{T}}^{\ast}}{\sqrt{\mathfrak{h}}}\right\Vert
_{\partial\Omega}  &  \leq C\left\Vert \frac{\mathfrak{p}}{\mathfrak{h}%
}\left(  u_{\mathcal{T}}-u_{\mathcal{T}}^{\ast}\right)  \right\Vert \leq
\frac{C}{\sqrt{\mathfrak{a}}}\left\Vert \sqrt{{\mathfrak{a}\frac
{\mathfrak{p}^{2}}{\mathfrak{h}}}}%
\llbr
u_{\mathcal{T}}%
\rrbr
\right\Vert _{\mathfrak{S}^{I}},\\
\left\Vert \mathfrak{d}\frac{\sqrt{\mathfrak{h}}}{\mathfrak{p}}\partial
_{\mathbf{n}}\left(  u_{\mathcal{T}}-u_{\mathcal{T}}^{\ast}\right)
\right\Vert _{\partial\Omega}  &  +\left\Vert \frac{\sqrt{\mathfrak{bh}}%
}{\mathfrak{p}}%
\llbr
\nabla_{\mathcal{T}}\left(  u_{\mathcal{T}}-u_{\mathcal{T}}^{\ast}\right)
\rrbr
_{N}\right\Vert _{\mathfrak{S}^{I}}\\
&  \leq C\left\Vert \nabla\left(  u_{\mathcal{T}}-u_{\mathcal{T}}^{\ast
}\right)  \right\Vert \leq\frac{C}{\sqrt{\mathfrak{a}}}\left\Vert
\sqrt{{\mathfrak{a}\frac{\mathfrak{p}^{2}}{\mathfrak{h}}}}%
\llbr
u_{\mathcal{T}}%
\rrbr
\right\Vert _{\mathfrak{S}^{I}}.
\end{align*}
This finally leads to%
\begin{align*}
\sum_{K\in\mathcal{T}}\eta_{J_{K}}^{2}\leq &  \frac{C}{\sqrt{\mathfrak{a}}}
\Bigg( \eta_{R}+\left\Vert \sqrt{\mathfrak{h}}%
\llbr
\nabla_{\mathcal{T}}u_{\mathcal{T}}%
\rrbr
_{N}\right\Vert _{\mathfrak{S}^{I}}+\left\Vert \sqrt{\mathfrak{h}}\left(
g-\partial_{\mathbf{n}}u_{\mathcal{T}}-\operatorname*{i}ku_{\mathcal{T}%
}\right)  \right\Vert _{\partial\Omega}\\
&  +\frac{1}{\sqrt{\mathfrak{a}}}\left\Vert \sqrt{{\mathfrak{a}\frac
{\mathfrak{p}^{2}}{\mathfrak{h}}}}%
\llbr
u_{\mathcal{T}}%
\rrbr
_{N}\right\Vert _{\mathfrak{S}^{I}}\Bigg) \left\Vert \sqrt{{\mathfrak{a}%
\frac{\mathfrak{p}^{2}}{\mathfrak{h}}}}%
\llbr
u_{\mathcal{T}}%
\rrbr
\right\Vert _{\mathfrak{S}^{I}}.
\end{align*}
We divide this inequality by the last factor, absorb the last summand in the
left-hand side for sufficiently large $\mathfrak{a}$, and estimate $\eta
_{R}\leq\tilde{\eta}_{R}+\operatorname*{osc}_{\mathcal{T}}$. Thus, we have
proved (\ref{etajumpproof}).\medskip

\textbf{Part 2.} From (\ref{etajumpproof}) we will derive (\ref{effestd}%
)\textbf{.}

The second term in the right-hand side of (\ref{etajumpproof}) can be
estimated by using (\ref{effesta}) while the estimate for the last sum in
(\ref{etajumpproof}) follows from (\ref{effestb}) and (\ref{sqrthq}).%
\qquad\end{proof}%

\begin{remark}
\label{Rempoll}\quad

\begin{enumerate}
\item[a.] As is well-known for residual a posteriori error estimation in the
context of $hp$-finite elements, the \emph{reliability estimate} is robust with
respect to the polynomial degree while the \emph{efficiency estimate} is
polluted by powers of $p_{K}$ due to inverse inequalities. The theory of
\cite{MelenkWohlmuth} allows to shift powers of $p_{K}$ in the efficiency
estimate to powers of $p_{K}$ in the reliability estimate by employing certain
powers of bubble functions in the definition of the error estimator. This can
also be done for the $\operatorname*{dG}$-formulation of the Helmholtz problem
and is worked out in \cite{ZechMaster}.
\item[b.] A difference to standard elliptic problems is the appearance of the
adjoint approximation property $\sigma_{k}^{\ast}\left(  S_{\mathcal{T}}^{\mathfrak{p}}\right)$
(cf. (\ref{adjointapp})) in the reliability estimate, and powers of the quantity
$M_{\frac{\operatorname*{kh}}{\operatorname*{p}}}$ in (\ref{defMkhp}) in both, the
efficiency and the reliability estimates. For convex polygonal domains, it can be
shown that $p\ge C_0 \log (k)$ and the \emph{resolution condition }%
\begin{equation}
\frac{kh_{K}}{p_{K}}\leq C_{1}\qquad\forall K\in\mathcal{T}, \label{rescond}%
\end{equation}
for some $C_0,C_1>0$, together with appropriate geometrical
mesh refinement in neighbourhoods of the polygon vertices are sufficient to bound
the adjoint approximation property $\sigma_{k}^{\ast}\left(  S_{\mathcal{T}}^{\mathfrak{p}}
\right)$ (see \cite[Thm. 2.4.2]{ZechMaster} and \cite{MelenkSauterMathComp, MPS13}).
The constant $M_{\frac{\operatorname*{kh}}{\operatorname*{p}}}$ is then controlled
by $C_{1}$. The above conditions are easily satisfied and imply that only $O\left(1\right)
$ degrees of freedom per wave length and per coordinate direction are necessary to obtain
a $k$-independent reliabilty estimate.
\item [c.] Note that in the reliability estimate the factor
$\sigma_{k}^{\ast}\left(  S_{\mathcal{T}}^{\mathfrak{p}}\right) M_{\frac{\operatorname*{kh}}%
{\operatorname*{p}}}^{3/2}$ appears and in the efficiency estimate the factor
$M_{\frac{\operatorname*{kh}}{\operatorname*{p}}}$ appears. This indicates that for large
$M_{\frac{\operatorname*{kh}}{\operatorname*{p}}}$ the estimator might overestimate or
underestimate the error, whereas a large value of $\sigma_{k}^{\ast}\left(  S_{\mathcal{T}}^%
{\mathfrak{p}}\right)$ suggests that the error might be underestimated (cf. \cite{Strouboulis1,%
Ihlenburgbook, Oden2005} and also Fig. \ref{fig:fig2}).
\end{enumerate}
\end{remark}

\begin{remark}
\label{RemWithoutJump}The proof of (\ref{etajumpproof}) implies that the jump
term $\eta_{J}$ in the error estimator can be omitted under two mild
restrictions: a) The constant $\mathfrak{a}$ in (\ref{dgsesquilina}) must
satisfy $\mathfrak{a}\geq C_{\mathfrak{a}}>0$ for a sufficiently large
constant $C_{\mathfrak{a}}$ which only depends on the shape regularity via
$\rho_{\mathcal{T}}$. However, explicit estimates for $C_{\mathfrak{a}}$ are
not available yet. b) The edge terms in the right-hand side of
(\ref{etajumpproof}) are by a factor $\sqrt{p_{e}}$ larger compared to edge
residuals $\eta_{E_{K}}$ and this leads to a reliability error estimate for
the error estimator without jump term $\eta_{J}$ which is polluted by a factor
$\sqrt{p_{e}}$. However, the a priori analysis in \cite{MelenkSauterMathComp}
and \cite{mm_stas_helm2} indicates that $p\sim\log k$ is a typical choice so
that this pollution is expected to be quite harmless.
\end{remark}

\section{Numerical Experiments\label{SecNumExp}}

In this section we will report on numerical experiments to get
insights in the following questions: a) How sharp does the error
estimator reflect the behavior of the true error for uniform as well
as for adaptive mesh refinement. b) How does the error estimator
behave for scenarios which are not covered by our theory: for
non-constant wavenumbers as well as for non-convex domains.

We have realized the $\operatorname*{dG}$-discretization with
\textsc{Matlab} and based the implementation on the finite element
toolbox
LehrFEM\footnote{\url{http://www.sam.math.ethz.ch/~hiptmair/tmp/LehrFEMManual.pdf}}%
.

The error in this section will be measured in the norm
\begin{align*}
  \Vert u \Vert_{\mathcal{H};\mathcal{T}}:=\Vert k u \Vert+\Vert\nabla
  _{\mathcal{T}} u\Vert.
\end{align*}

\subsection{Adaptive Algorithm}

First, we will briefly describe our adaptive algorithm and refer for
details, e.g., to \cite{NochSiebVees2009}. It consists of the
following flow of modules:
\texttt{SOLVE}$\longrightarrow$\texttt{ESTIMATE}$\longrightarrow
$\texttt{MARK}$\longrightarrow$\texttt{REFINE} and we will comment on
their realization next.

\subsubsection{Solve}

The module \texttt{SOLVE} finds the solution $u_{\mathcal{T}}$ of
(\ref{dgsesquilin}) for a given mesh $\mathcal{T}$ with polynomial
degree function $\mathfrak{p}$ and data $f$, $g$, $k$, $\Omega$. In
our implementation all integrals involved in (\ref{dgsesquilin}) are
computed by quadrature on edges and elements.

\subsubsection{Estimate}

As explained in Remark \ref{RemWithoutJump} we have omitted the jump
term $\eta_{J}$ and realized the right-hand side in
(\ref{etajumpproof}) as the error estimator. For simplicity we have
also omitted the oscillation terms and worked with the functions $f$,
$g$ instead. Again, all integrals are computed via numerical
quadrature. The resulting local and global error estimator are denoted
by
\[
\check{\eta}_{K}^{2}:\dot{=}\eta_{R_{K}}^{2}+\frac{p_{K}}{2}\left\Vert
  \sqrt{{\mathfrak{b}\frac{\mathfrak{h}}{\mathfrak{p}}}}%
  \llbr
  \nabla_{\mathcal{T}}u_{\mathcal{T}}%
  \rrbr
  _{N}\right\Vert _{\partial^{I}K}^{2}+\left\Vert
  \sqrt{\mathfrak{h}}\left(
    g-\left( \partial_{\mathbf{n}}+\operatorname*{i}k\right)
    u_{\mathcal{T}%
    }\right) \right\Vert _{\partial^{B}K}^{2}%
\]
and
\[
\check{\eta}:=\sum_{K\in\mathcal{T}}\check{\eta}_{K}^{2},
\]
where the notation \textquotedblleft$\dot{=}$\textquotedblright\
indicates that the left-hand side equals the right-hand side up to
numerical quadrature.

\subsubsection{Mark}

After having computed the local estimators $\check{\eta}_{K}$ a
refinement strategy has to be applied and we employ D\"{o}rfler's
marking strategy: Fix the triangulation $\mathcal{T}$ and let
$u_{\mathcal{T}}\in S_{\mathcal{T}%
}^{\mathfrak{p}}$ be the $\operatorname*{dG}$-solution. Denote by
$\mathcal{S}$ some subset of $\mathcal{T}$. We write%
\[
\check{\eta}\left( u_{\mathcal{T}},\mathcal{S}\right) :=\sum_{K\in
  \mathcal{S}}\check{\eta}_{K}^{2}\left( u_{\mathcal{T}}\right) .
\]
For fixed threshold $\theta\in\left] 0,1\right] $, the set of marked
elements $\mathcal{M}\subseteq\mathcal{T}$ is defined by%
\[
\mathcal{M}:=\operatorname{argmin}\left\{ \operatorname{card}\left(
    \mathcal{S}\right)
  \mid\mathcal{S}\subseteq\mathcal{T}\wedge\check{\eta }\left(
    u_{\mathcal{T}},\mathcal{S}\right) \geq\theta\check{\eta}\left(
    u_{\mathcal{T}},\mathcal{T}\right) \right\} .
\]

\subsubsection{Refine}

In this step, all elements $K\in\mathcal{M}$ are refined. Some
additional elements are refined to eliminate hanging nodes and we have
realized the largest edge bisection for this purpose. We emphasize
that our implementation is currently restricted to $h$ refinement
while an extension to adaptive $hp$-refinement will be the topic of
future research.

\subsection{Plane Wave Solutions}

The parameters $\mathfrak{a}=30$, $\mathfrak{b}=1$, and
$\mathfrak{d}=1/4$ in (\ref{dgsesquilin}) are fixed for all
experiments in this section. The adaptive refinement process is always
started on a coarse mesh where the number of mesh cells is $O\left(
  1\right) $ independent of $k$ and $p$.

\subsubsection{Example 1}

Let $\Omega=\left( 0,1\right) ^{2}$ and the data $f$, $g$ be given
such that $u\left( x,y\right) :=\exp\left( \operatorname*{i}k\left(
    x+y\right) \right) $ is the exact solution. As $u$ is an entire
function it is reasonable to refine the mesh uniformly. In
Fig. \ref{fig:fig1}, we compare the relative error in the $\left\Vert
  \cdot\right\Vert _{\mathcal{H}}$ norm for different wavenumbers. As
expected a) the pollution effect is visible, i.e., the convergence
starts later for higher wavenumbers and b) the pollution becomes
smaller for higher polynomial degree.

\begin{figure}[t]
  \subfloat[$p=1$]{\includegraphics[width=0.49\textwidth]{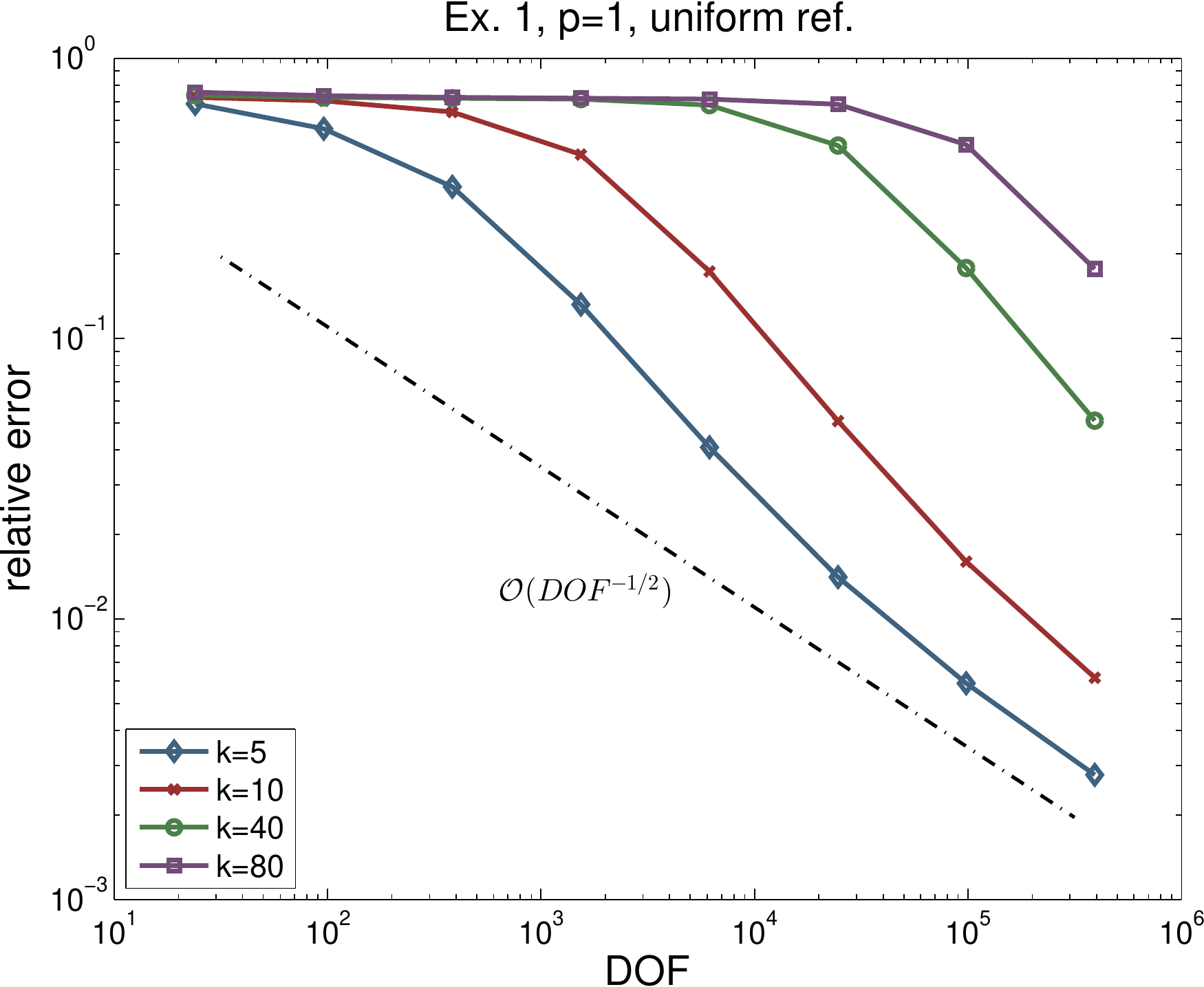}}
  \hfill
  \subfloat[$p=3$]{\includegraphics[width=0.49\textwidth]{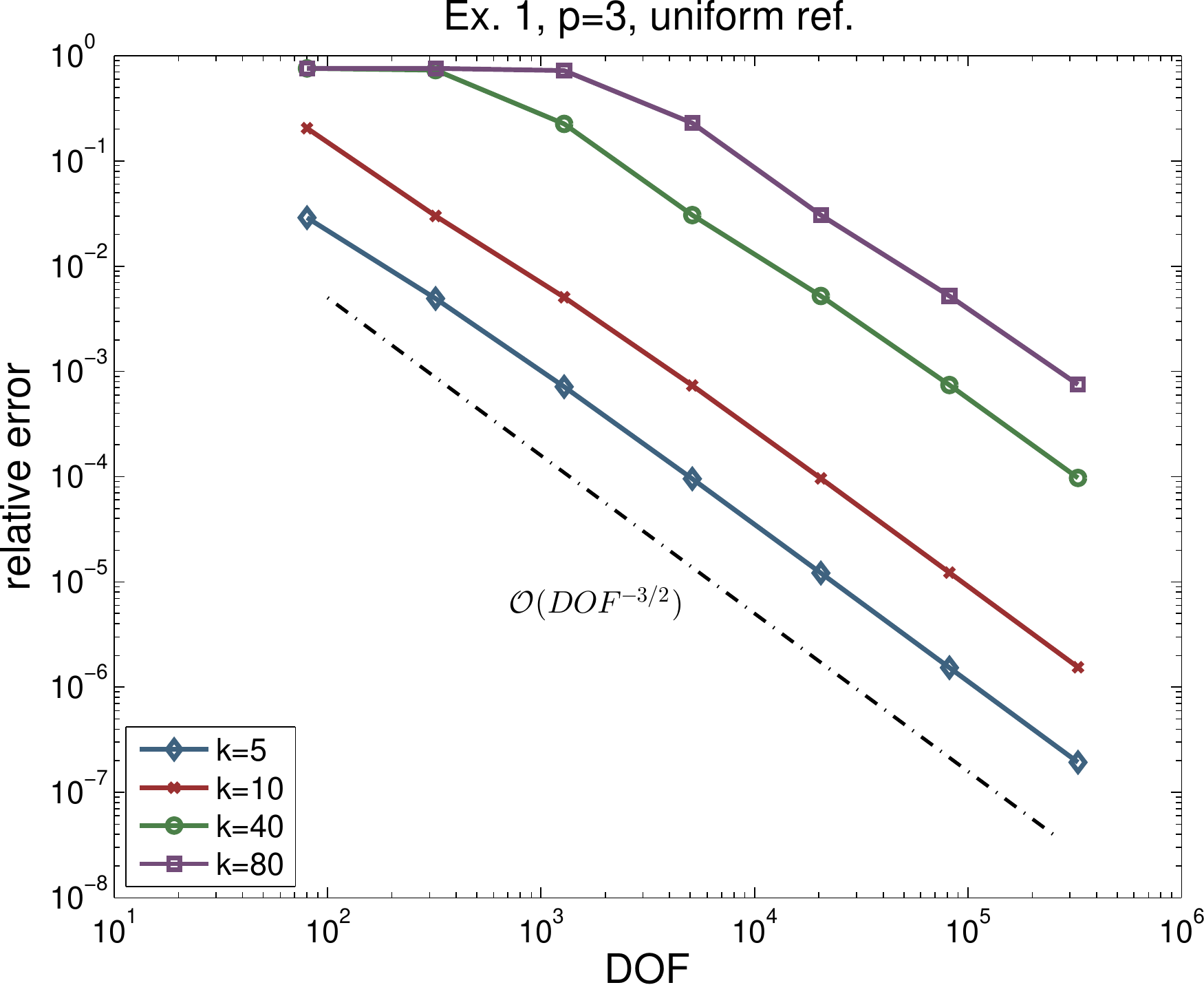}}
  \caption[Convergence for different values of $k$, Ex. 1]{Comparison
    of the relative error in the norm $\Vert \cdot
    \Vert_{\mathcal{H};\mathcal{T}}$, for the polynomial degrees $p=1$
    and $p=3$ for different values of $k$ in Example
    1.}\label{fig:fig1}
\end{figure}

Next we test the sharpness of the reliability estimate for the error
estimator. In Fig. \ref{fig:fig2} the ratio $\left\Vert
  u-u_{\mathcal{T}%
  }\right\Vert _{\mathcal{H};\mathcal{T}}/\check{\eta}\left(
  u_{\mathcal{T}%
  }\right) $ for different polynomial degrees and wavenumbers are
depicted.  Since we start with a very coarse initial mesh the constant
$M_{\frac {\operatorname*{kh}}{\operatorname*{p}}}$ increases with
increasing $k$ in the pre-asymptotic regime and, due to Remark
\ref{Rempoll}.c, an underestimating can be expected (as compared to
when the asymptotic regime is reached). This effect can be seen in
Fig. \ref{fig:fig2} while the asymptotic regime is reached faster for
higher order polynomial degree.

\begin{figure}[t]
  \subfloat[$p=1$]{\includegraphics[width=0.49\textwidth]{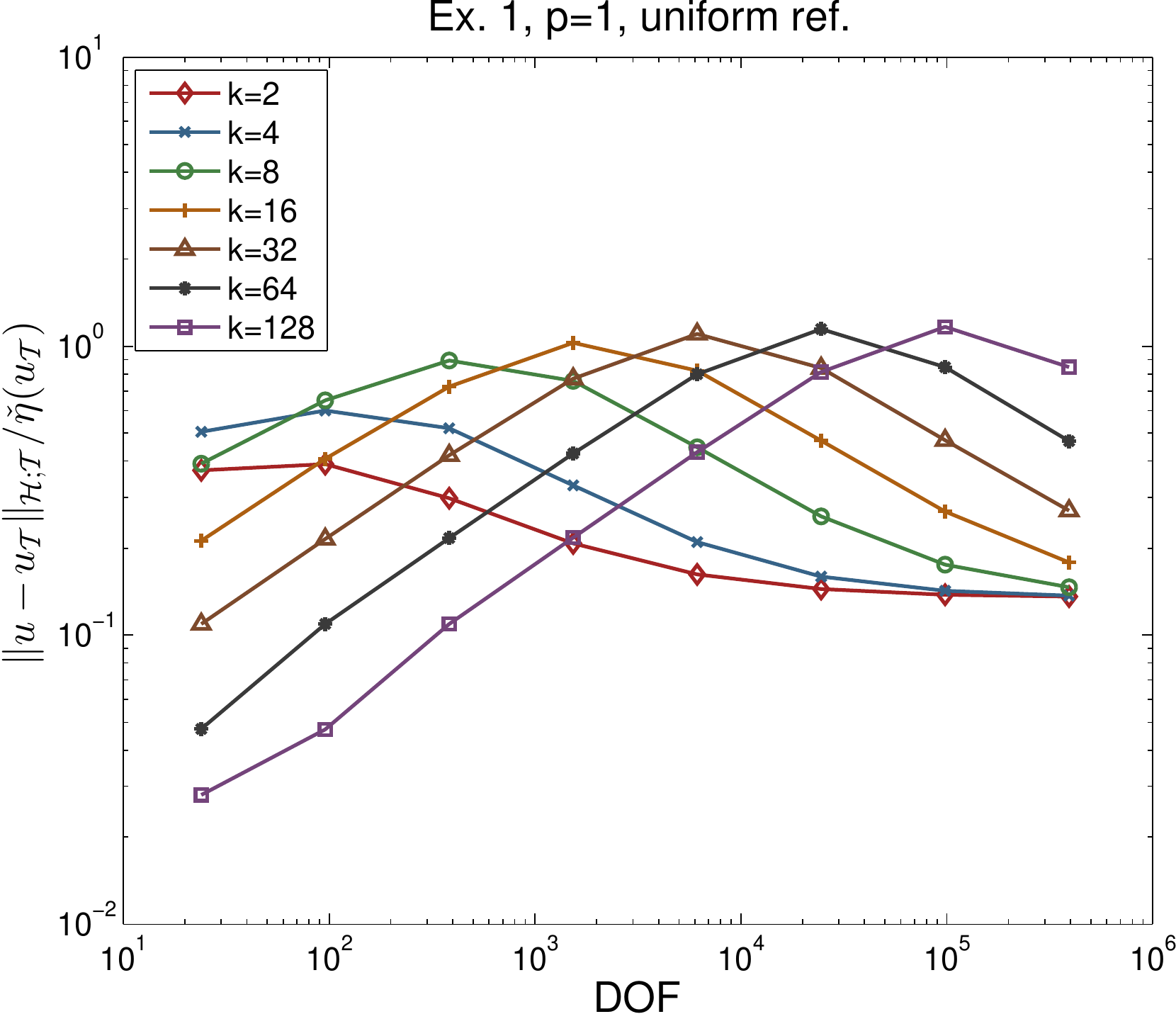}}
  \hfill
  \subfloat[$p=3$]{\includegraphics[width=0.49\textwidth]{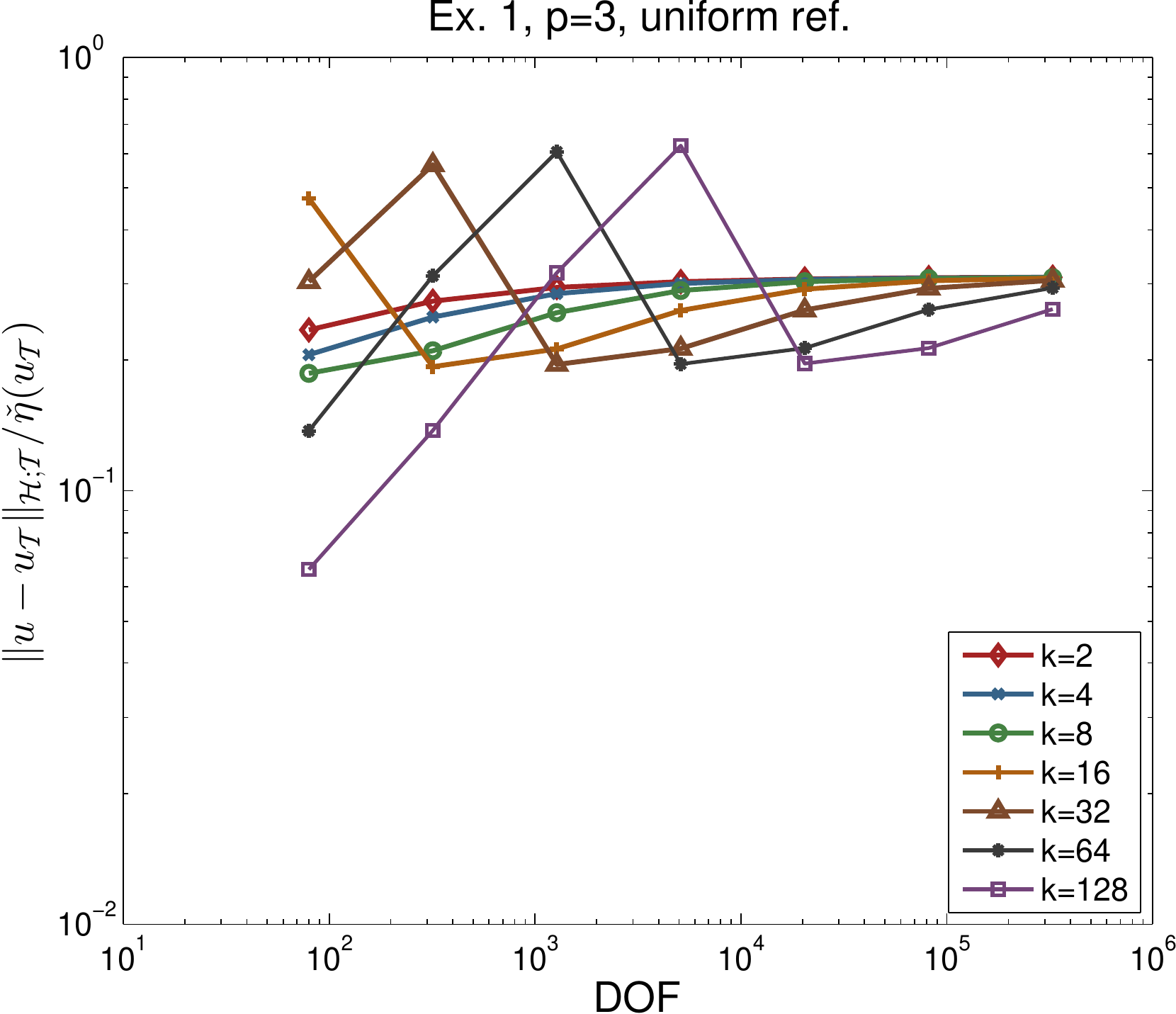}}
  \caption[Ratio of the exact and estimated error, Ex. 1]{Ratio of the
    exact error $\Vert
    u-u_{\mathcal{T}}\Vert_{\mathcal{H};\mathcal{T}}$ and the
    estimated error $\check{\eta}(u_{\mathcal{T}})$ for different
    values of $k$ in Example 1.}\label{fig:fig2}
\end{figure}

\subsubsection{Example 2}

We consider the Helmholtz problem on $\Omega=\left( 0,2\pi\right)
^{2}$ with the exact solution $u\left( x,y\right) =\exp\left(
  \operatorname*{i}%
  kx\right) $. The corresponding functions $f$ and $g$ are chosen
accordingly:%
\begin{equation}
  f:=0\quad\text{and\quad}g\left(  x,y\right)  :=\left\{
    \begin{array}
      [c]{ll}%
      0 & \text{if }x=0,\\
      2\operatorname*{i}k & \text{if }x=2\pi,\\
      \operatorname*{i}k\operatorname*{e}^{\operatorname*{i}kx} & \text{otherwise,}%
    \end{array}
  \right.  \qquad\forall\left(  x,y\right)  \in\partial\Omega. \label{defg2}%
\end{equation}
The $\operatorname*{dG}$-solution for very coarse meshes is strongly
polluted and does not reflect the uniformly oscillating behavior,
e.g., in the imaginary part $\operatorname{Im}u=\sin kx$ of the
solution. One possible interpretation is that $f=0$ in $\Omega$ and
$g=0$ at the left boundary have the effect that $u_{\mathcal{T}}$ is
small close to the left boundary while at the right boundary the
oscillations got resolved earlier. This is \textquotedblleft
seen\textquotedblright\ also by the error estimator and stronger
refinement takes place in the early stage of adaptivity close to the
right boundary. Only after some refinement steps the strong mesh
refinement penetrates from right to left into the whole domain
(see. Fig. \ref{fig:fig4}%
). In Fig. \ref{fig:fig5}(a), we see that the mesh starts to become
uniform as soon as the resolution condition (\ref{rescond}) is
fulfilled and the error starts to decrease.

\begin{figure}[t]
  \subfloat[Mesh after 26 adaptive refinements, 9044 elements, $\min_e
  h_e = 0.049$] {\includegraphics[width=0.45\textwidth]{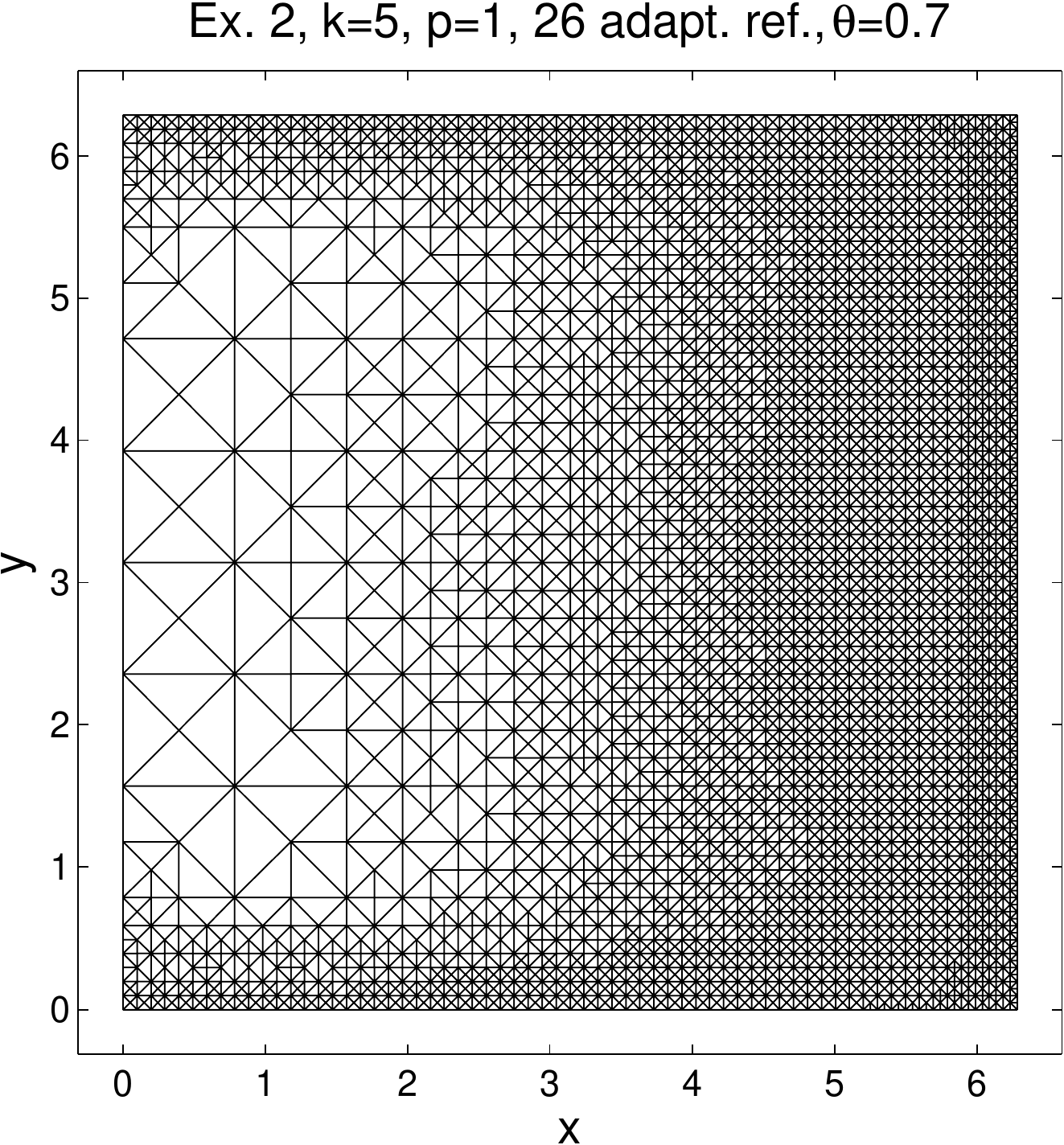}}
  \hfill \subfloat[$\Im (u_{\mathcal{T}})$ for a uniformly refined
  mesh]{\includegraphics[width=0.521\textwidth]{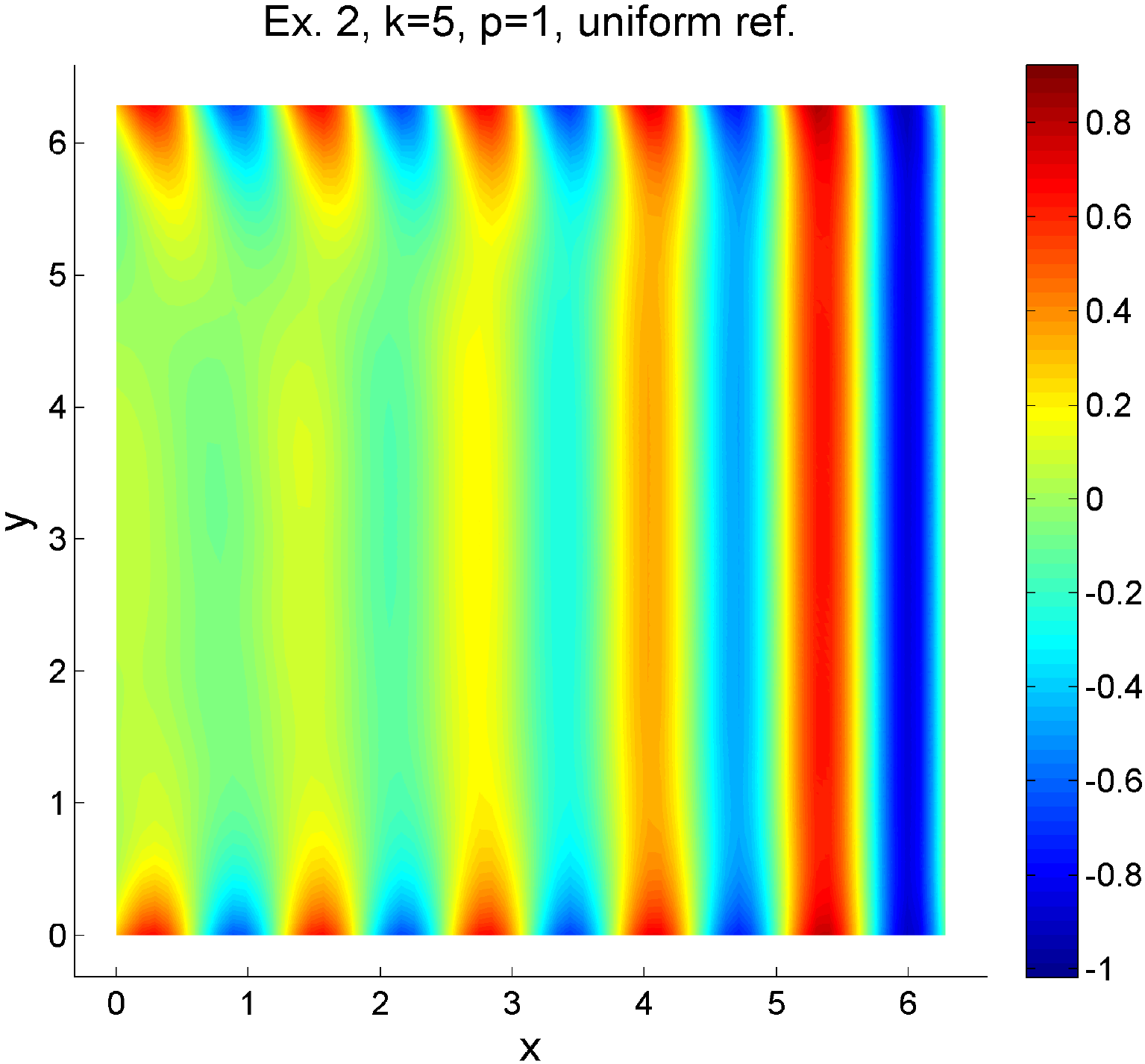}}

  \caption[Adaptive mesh and DGFEM solution, Ex. 2]{Adaptive mesh and
    imaginary part of the DGFEM solution for a uniform mesh with large
    mesh width, $k=5$, and $p=1$ in Example 2. The exact solution is
    $u(x,y)=\exp(\operatorname*{i} kx)$, and therefore $\Im(u(x,y)) =
    \sin(kx)$.}
  \label{fig:fig4}
\end{figure}

\begin{figure}[t]
  \subfloat[In this plot $|e|$ denotes the length of the edge $e$.
  The plot shows the maximum length of an edge, the minimum length of
  an edge, and the ratio for the $j$-th adaptively refined mesh.]
  {\includegraphics[width=0.475\textwidth]{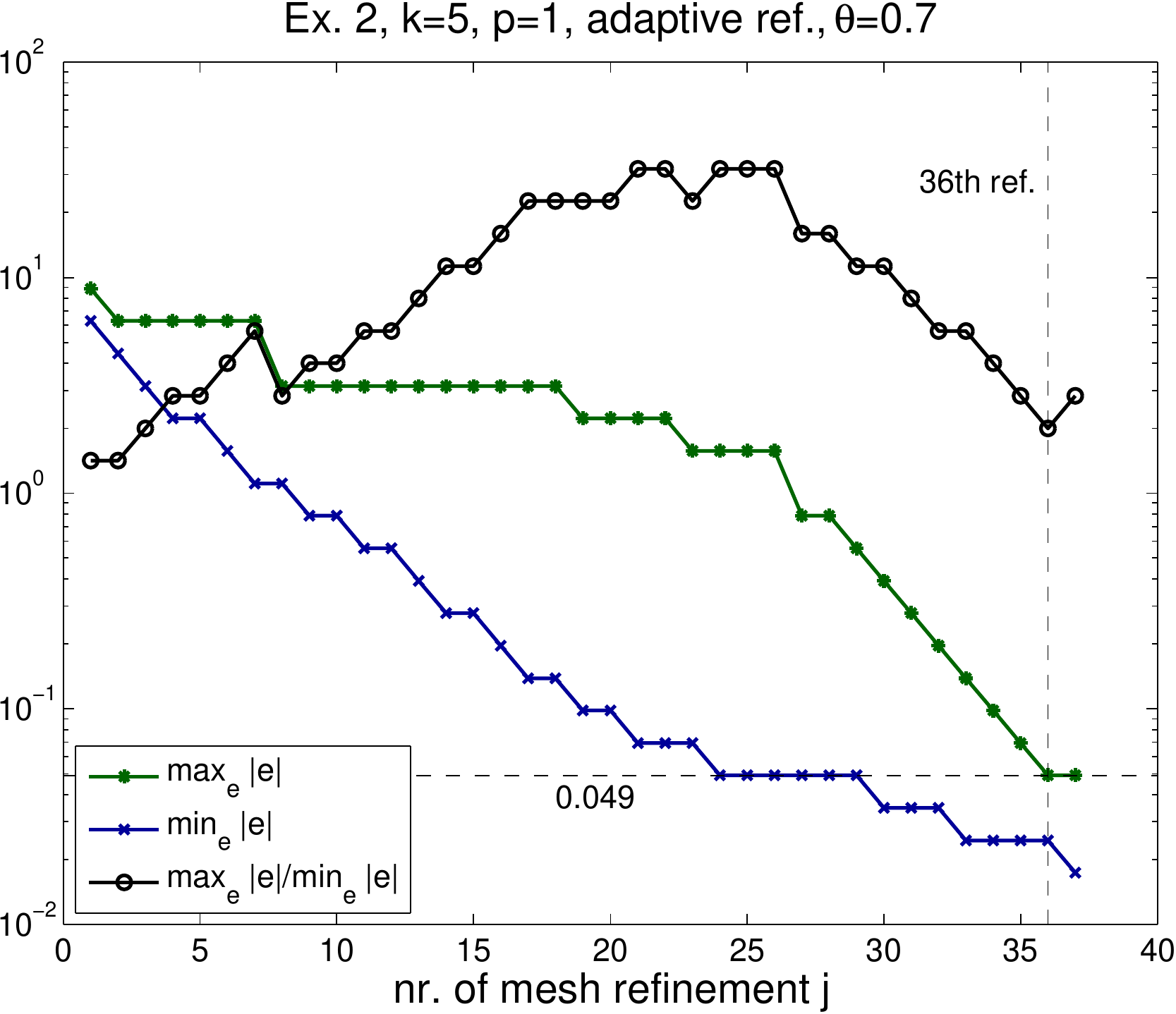}} \hfill
  \subfloat[Error $\Vert
  u-u_{\mathcal{T}}\Vert_{\mathcal{H};\mathcal{T}}$ for uniform and
  adaptive refinement (with $\theta=0.7$) and the estimated error
  $\check \eta(u_{\mathcal{T}})$ for adaptive refinement]
  {\includegraphics[width=0.49\textwidth]{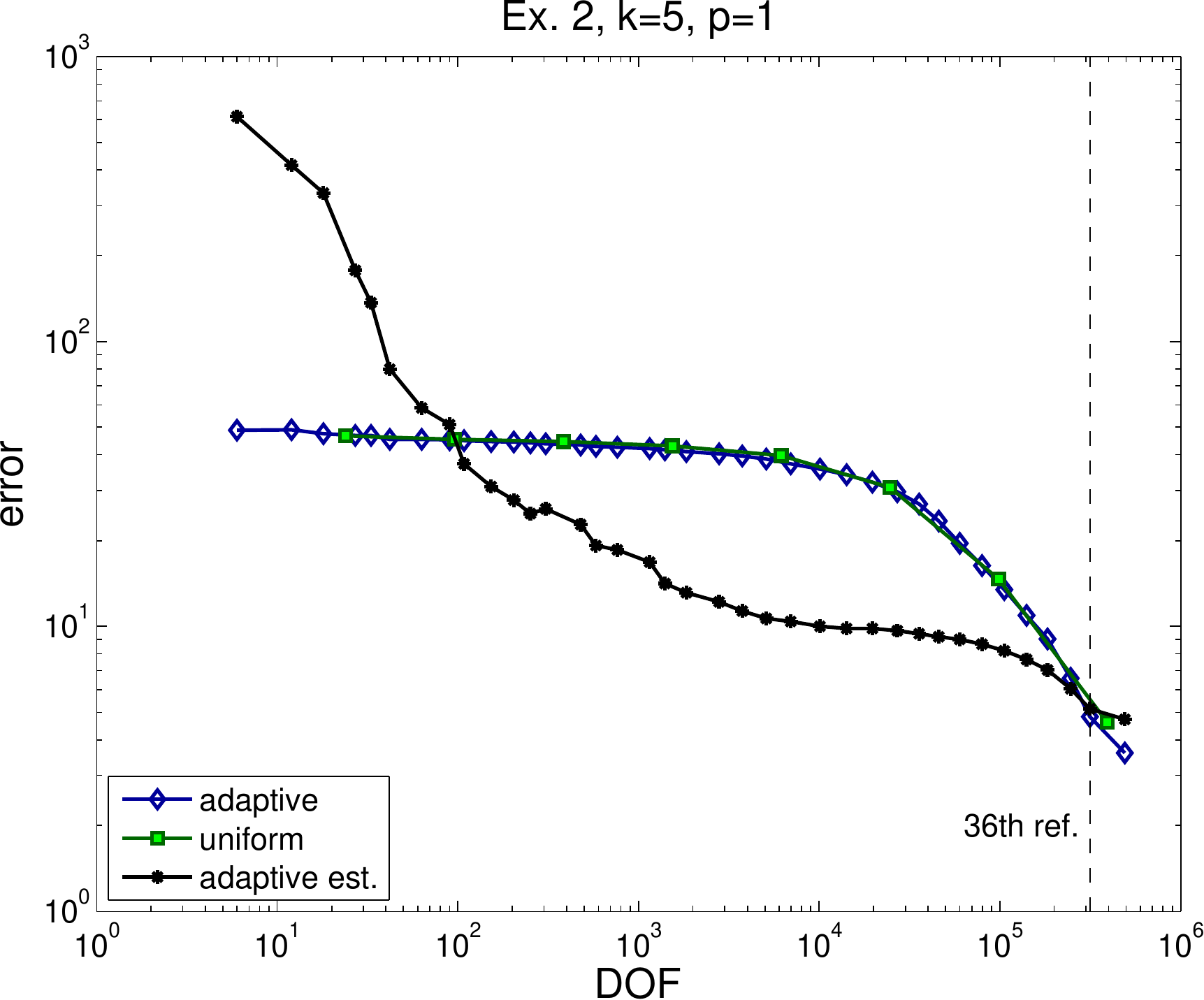}}
  \caption[Critical mesh width, Ex. 2]{In Fig. (a) it can be seen that
    the adaptive algorithm, applied to Example 2 with $k=5$ and $p=1$,
    at first generates a mesh with very diverse element sizes, which
    then turns into an almost uniform mesh at about the 36th
    refinement. This refinement corresponds to a maximum edge length
    of $0.049$. In both plots, the dashed line marks this mesh width,
    respectively the point at which this adaptive refinement takes
    place. We observe that convergence for uniform refinement starts
    shortly before this mesh size is reached. Moreover, at this
    refinement, the error estimator surpasses the actual error in this
    example, and the error is underestimated in the preasymptotic
    range.}%
  \label{fig:fig5}
\end{figure}

Furthermore we emphasize the following two points.

\begin{enumerate}
\item[a.] As is well-known reliability is not a local property and we
  have here an example where the local error indicator
  $\check{\eta}_{K}$ differs significantly from the local error in the
  left part of the domain in the pre-asymptotic regime. In addition,
  $M_{\frac{\operatorname*{kh}%
    }{\operatorname*{p}}}$ is large and due to Remark \ref{Rempoll}.c
  the underestimation of the error in this early stage of refinement
  can be explained. This behavior is illustrated in
  Fig. \ref{fig:fig5}(b).%
\item[b.] It is also worth mentioning that we start the adaptive
  discretization with a very coarse initial mesh where the resolution
  condition (\ref{rescond}) is not fulfilled for a moderate constant
  $C_{0}$. The numerical experiments indicate that the adaptive
  process behaves robustly for the $\operatorname*{dG}$-formulation
  already in the pre-asymptotic regime.
\end{enumerate}

\subsection{Example 3: L-shaped Domain}

In this example we consider the L-shaped domain $\Omega:=\left(
  -1,1\right) ^{2}\backslash\left( \left[ 0,1\right] \times\left[
    -1,0\right] \right) $ with right-hand sides $f$ and $g$ chosen
such that the first kind Bessel function $u\left( x,y\right)
:=J_{1/2}\left( kr\right) $ with $r:=\sqrt{x^{2}+y^{2}}$ is the exact
solution (see also \cite{HoppeHelmholtz2013}). The Bessel function and
solution $u$ are plotted in Fig. \ref{fig:fig6}. The problem is chosen
such that the solution has a singularity at the reentrant corner
located at $\mathbf{0}$.

In Fig. \ref{fig:fig7}, two meshes generated by the adaptive procedure
are depicted for uniform polynomial degree $p=1$ and wavenumber
$k=10$. The oscillating nature of the solution as well as the singular
behavior is nicely reflected by the distribution of the mesh cells.

In Fig. \ref{fig:fig8}, we compare uniform with adaptive mesh
refinement for different values of $k$ and $p$. As expected the
uniform mesh refinement results in suboptimal convergence rates while
the optimal convergence rates are preserved by adaptive refinement for
the considered polynomial degrees $p=2,4$. In both cases some initial
refinement steps are required before the error starts to decrease due
to the pollution effect. Again the pollution is significantly reduced
for higher polynomial degree.

\begin{figure}
  \subfloat[$J_{1/2}(10 \Vert\vec{x}\Vert)$]%
  {\includegraphics[width=0.49\textwidth]{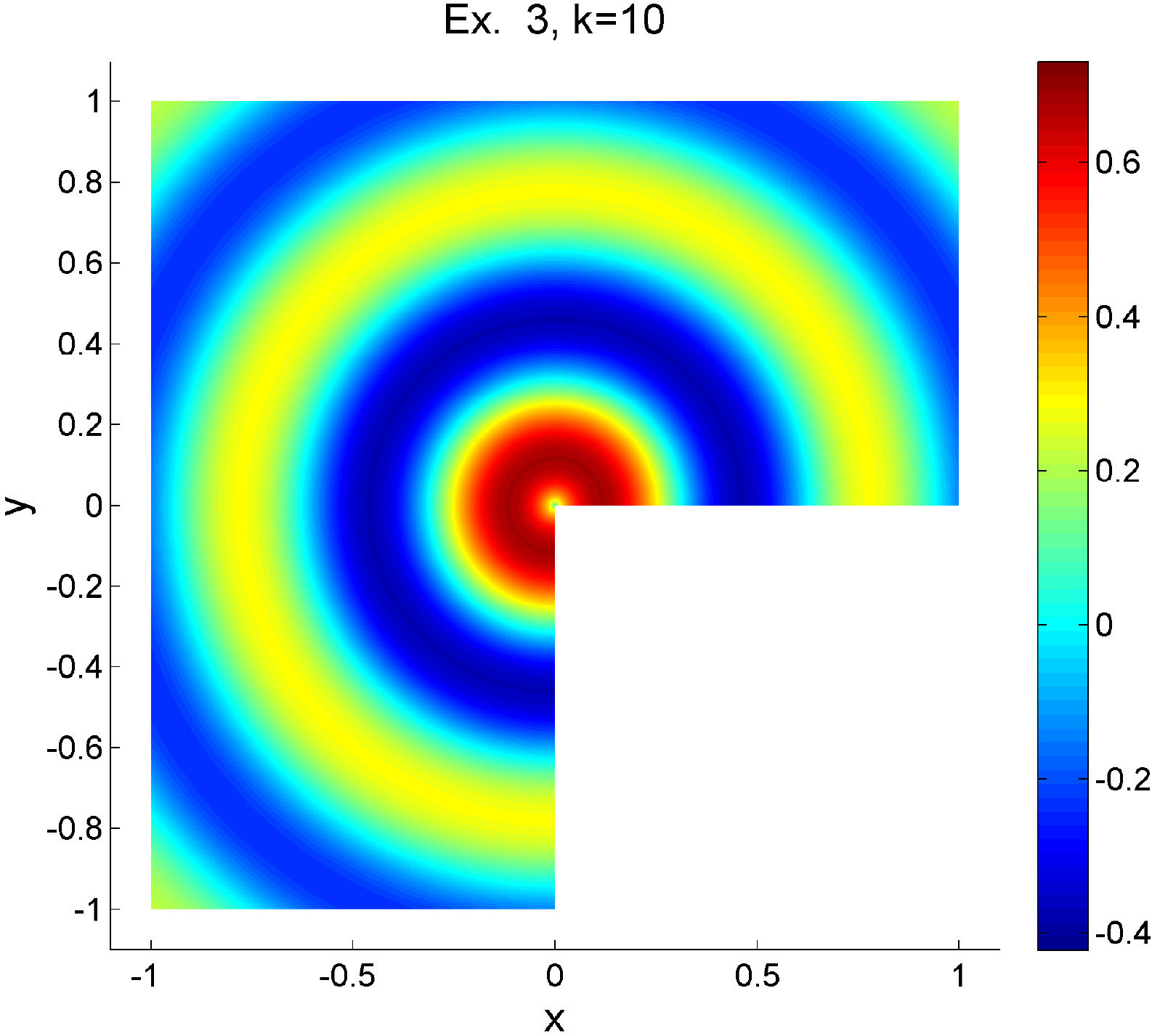}} \hfill
  \subfloat[Bessel function $J_{1/2}$ of the first kind]
  {\includegraphics[width=0.49\textwidth]{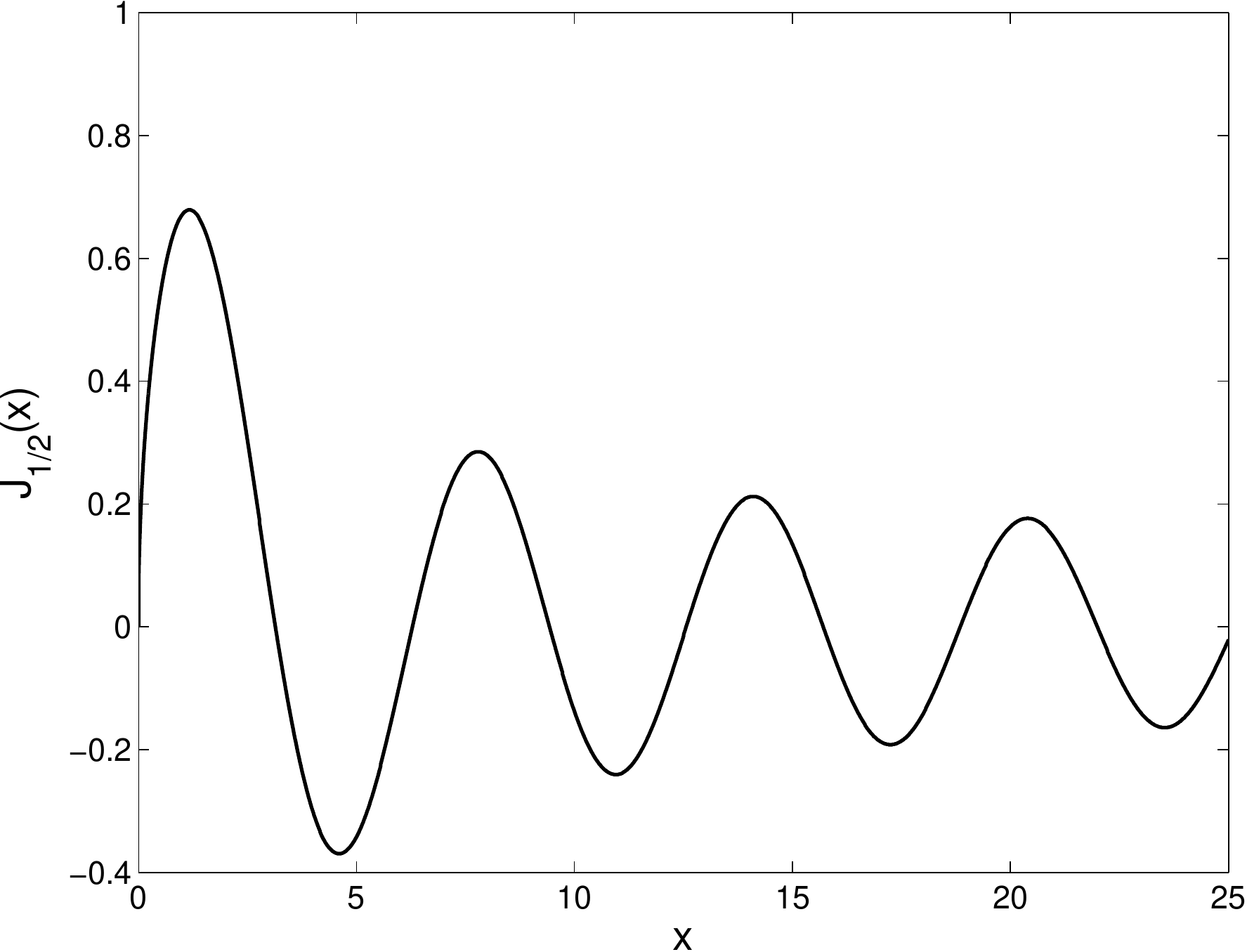}}
  \caption[Solution, Ex. 3]{The solution $u=J_{1/2}(kr)$ in Example 3
    for $k=10$, and the Bessel function $J_{1/2}(x)$, whose derivative
    goes to infinity for $x \to 0$.}\label{fig:fig6}
\end{figure}
\begin{figure}
  \subfloat[Mesh after 52 refinements, 677 elements, $\min_e h_e =
  0.00024$] {\includegraphics[width=0.48\textwidth]{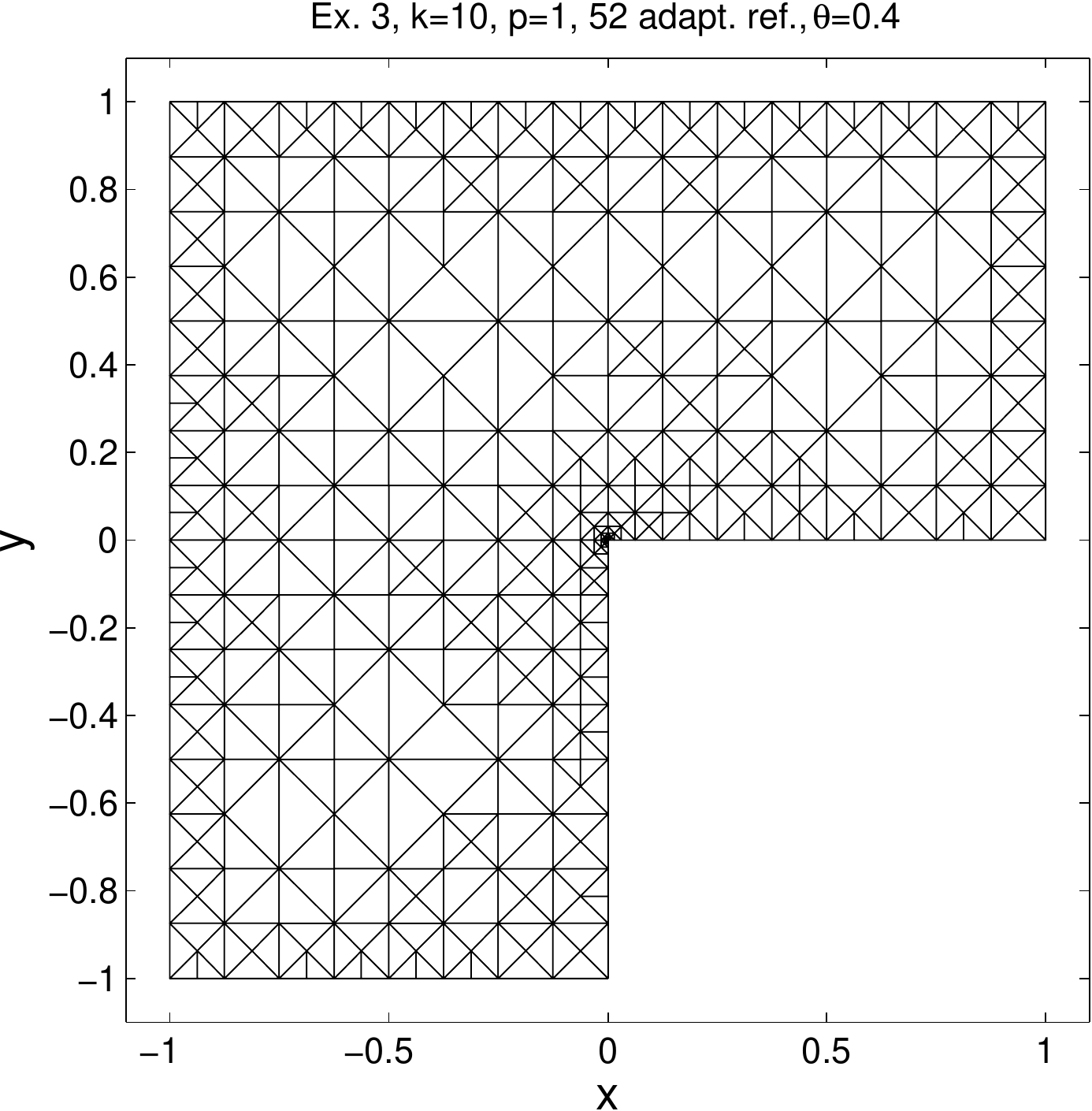}} \hfill
  \subfloat[Mesh after 66 refinements, 3677 elements, $\min_e h_e =
  0.000022$] {\includegraphics[width=0.48\textwidth]{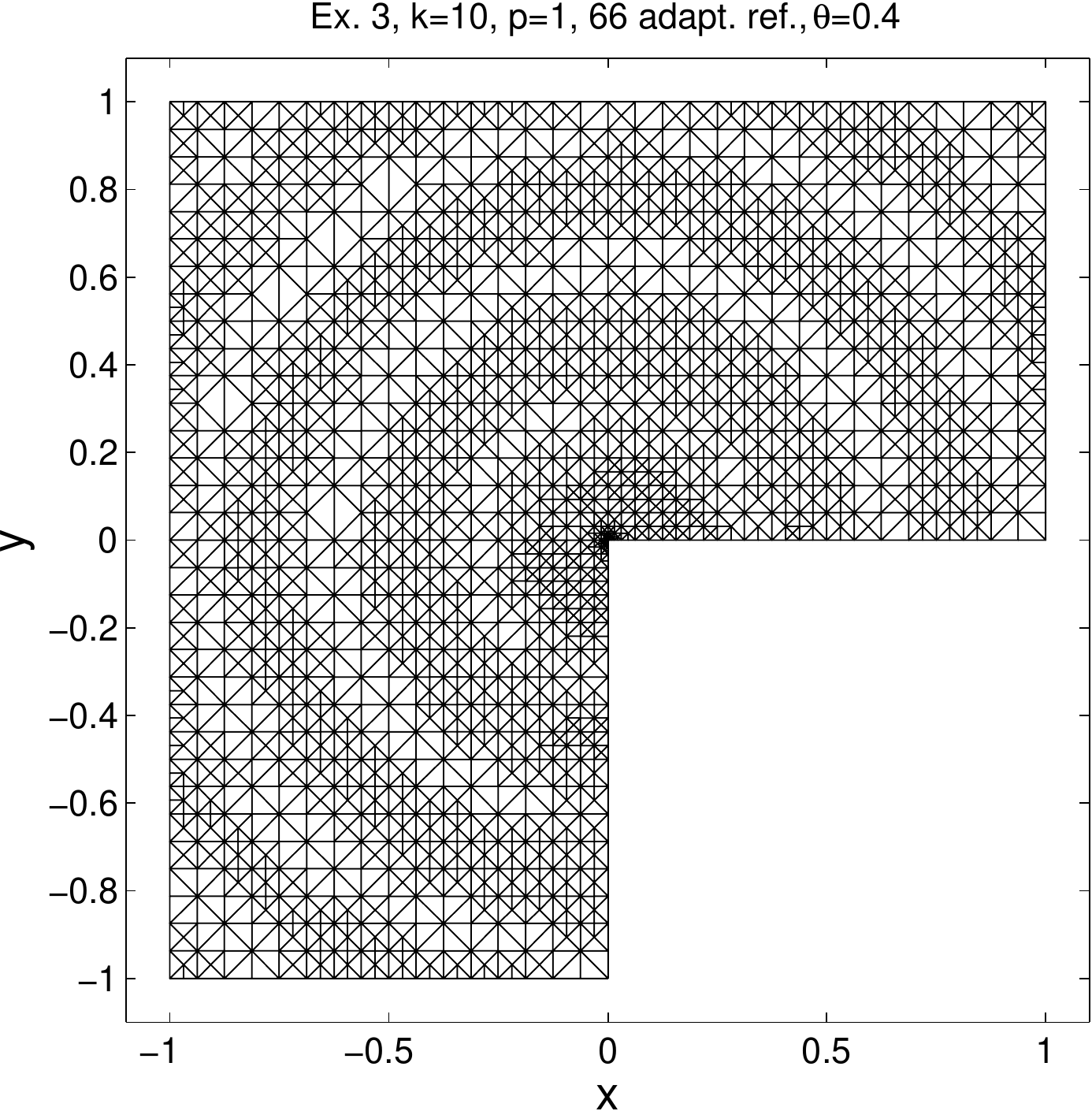}}
  \caption[Adaptive meshes, Ex. 3]{Meshes obtained by the adaptive
    algorithm for Example 3.}\label{fig:fig7}
\end{figure}

\begin{figure}
  \subfloat[$k=5$,
  $p=2$]{\includegraphics[width=0.49\textwidth]{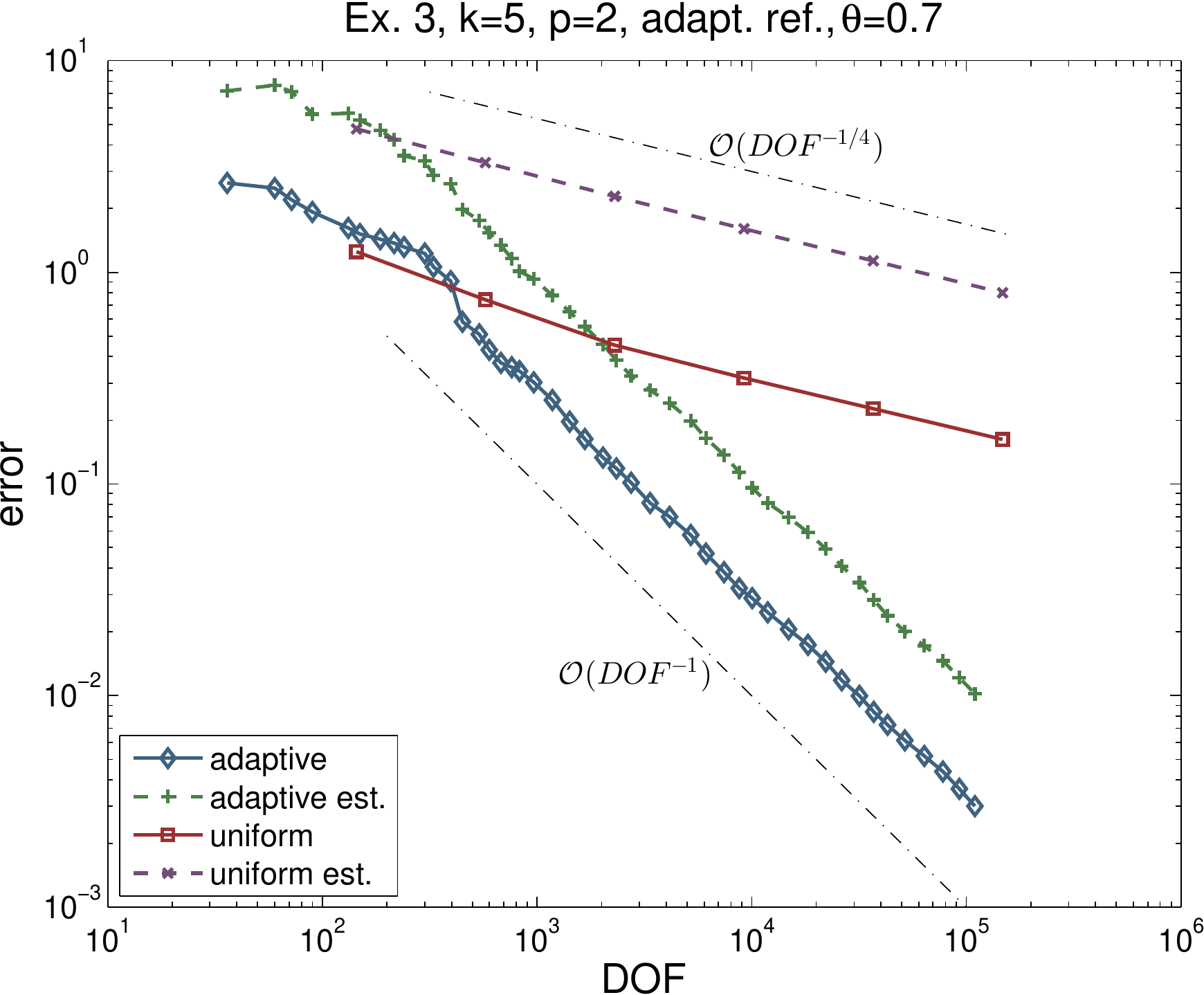}} \hfill
  \subfloat[$k=10$,
  $p=2$]{\includegraphics[width=0.49\textwidth]{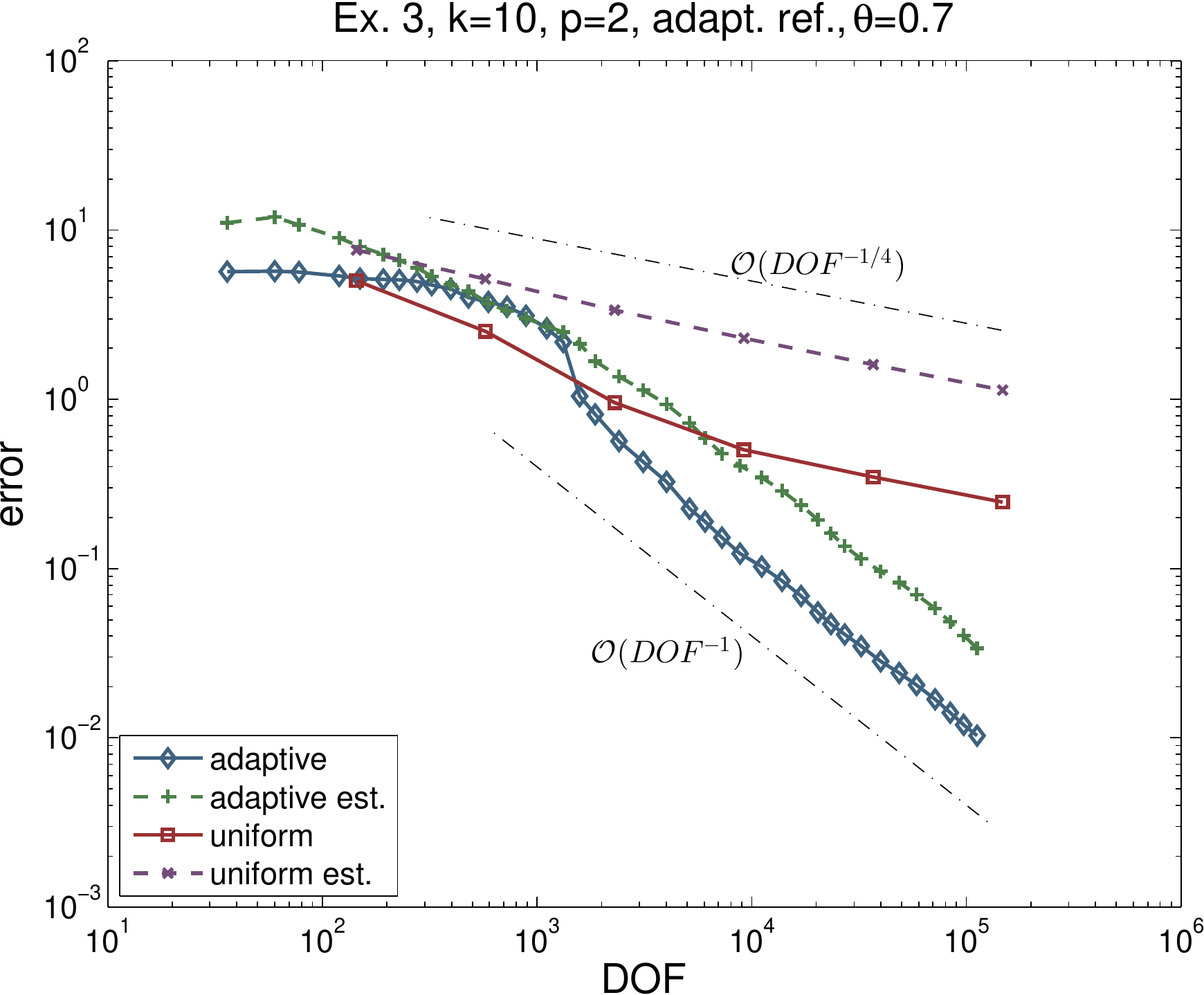}}

  \subfloat[$k=10$,
  $p=4$]{\includegraphics[width=0.49\textwidth]{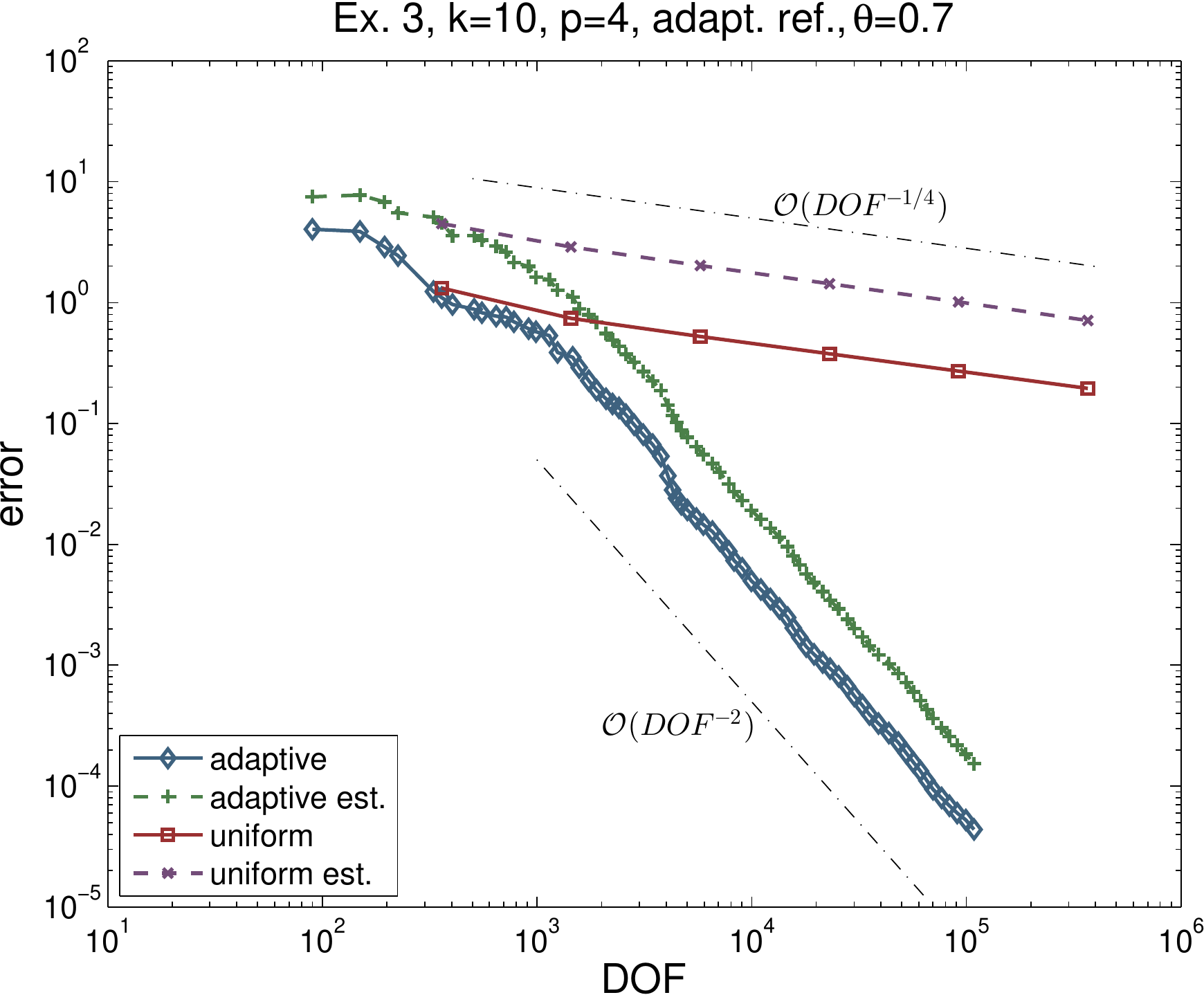}} \hfill
  \subfloat[$k=40$,
  $p=4$]{\includegraphics[width=0.49\textwidth]{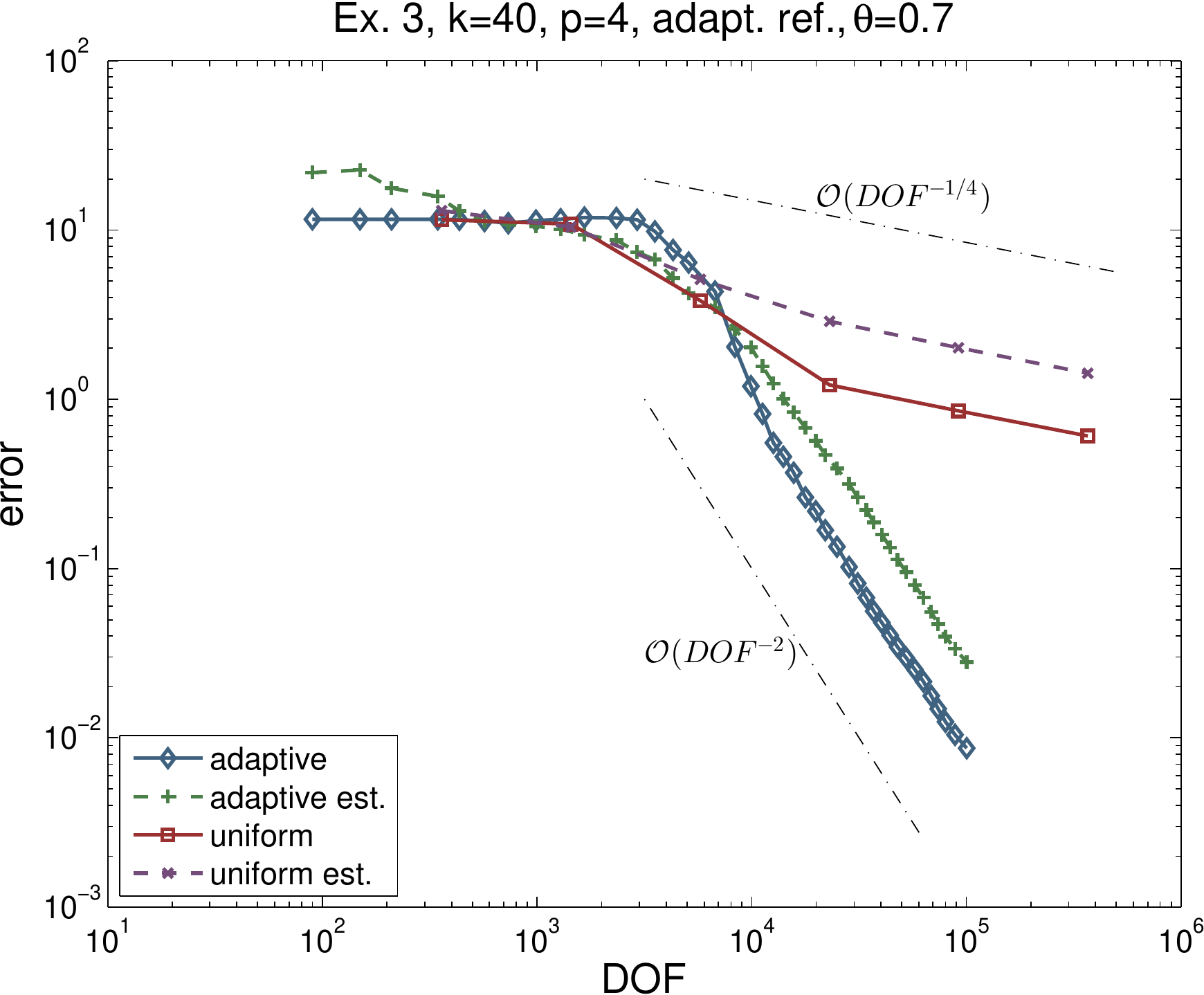}}
  \caption[Uniform vs. adaptive ref., Ex. 3]{Comparison of the actual
    error $\Vert u-u_{\mathcal{T}%
    }\Vert_{\mathcal{H};\mathcal{T}}$ and the estimated error
    $\check{\eta }(u_{\mathcal{T}} )$, using uniform and adaptive
    refinement with $\theta= 0.7$ in Example 3 for different values of
    $k$ and $p$.}
  \label{fig:fig8}
\end{figure}

\subsection{Example 4: Non-constant Wavenumber}

In this section, we consider the case of non-constant wavenumber $k$
which has important practical applications. Although we have
formulated the $\operatorname*{dG}$-method for non-constant wavenumber
our theory only covers the constant case. Nonetheless the numerical
experiments indicate that the a posteriori error estimation leads to
an efficient adaptive solution method.

Consider the domain $\Omega=\left( 0,2\pi\right) ^{2}$. We partition
$\Omega$ into the disc $\Omega_{1}$ about $\left( \pi,\pi\right)
^{\top} $ with radius $3/2$ and its complement
$\Omega_{2}:=\Omega\backslash\Omega_{1}$.  Let $k_{1},k_{2}>0$. The
function $k$ is defined piecewise by $\left.  k\right\vert
_{\Omega_{i}}:=k_{i}$, $i=1,2$. We have chosen $f=0$ and
\begin{equation}
  g_{1}\left(  x,y\right)  :=\left\{
    \begin{array}
      [c]{cl}%
      -1 & x=0,\\
      \operatorname*{i} & x=2\pi,\\
      0 & \text{otherwise},
    \end{array}
  \right.  \qquad\forall\left(  x,y\right)  \in\partial\Omega. \label{5.4.1}%
\end{equation}
Alternatively we will consider boundary data as defined in
(\ref{defg2}) with $k:=k_{2}$ and denote them here by $g_{2}$.

In Fig. \ref{fig:fig11}, the adaptively refined mesh and the real part
of the $\operatorname*{dG}$-solution are plotted for $k_{1}=10$,
$k_{2}=1$, and the boundary data $g_{1}$. Strong refinement takes
place in the vicinity of the circular interface between $\Omega_{1}$
and $\Omega_{2}$. Moreover, the mesh width is much smaller inside the
circle, where the wavenumber is high in accordance with the smoothness
properties of the solution.

In the next example we have considered the reversed situation:
$k_{1}=1$, $k_{2}=10$, and boundary data $g_{2}$. Fig. \ref{fig:fig12}
implies, that strong refinement close to the jump of the wavenumber is
not always necessary.  In this case, the solution appears to be
smooth, respectively almost zero near the left part of the inner
circle where $k=k_{2}$ holds and this is taken into account by the
adaptive algorithm. Fig. \ref{fig:fig15} reflects the convergence of
the estimated error for Example 4.

\begin{figure}
  \subfloat[Mesh after 28 refinements, 6076 elements, $\min_e h_e =
  0.012$] {\includegraphics[width=0.45\textwidth]{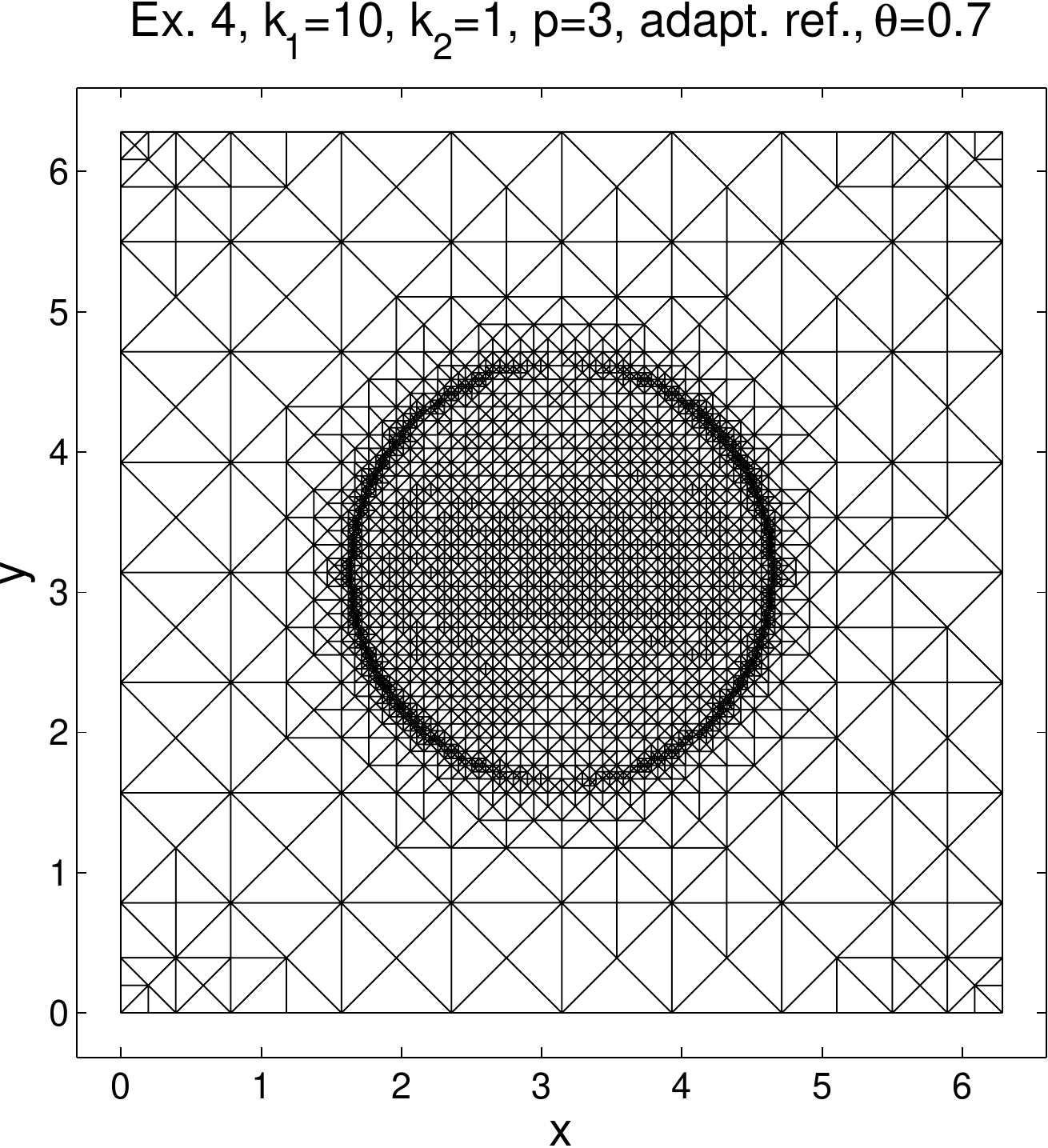}} \hfill
  \subfloat[$\Re (u_{\mathcal{T}}
  )$]{\includegraphics[width=0.521\textwidth]{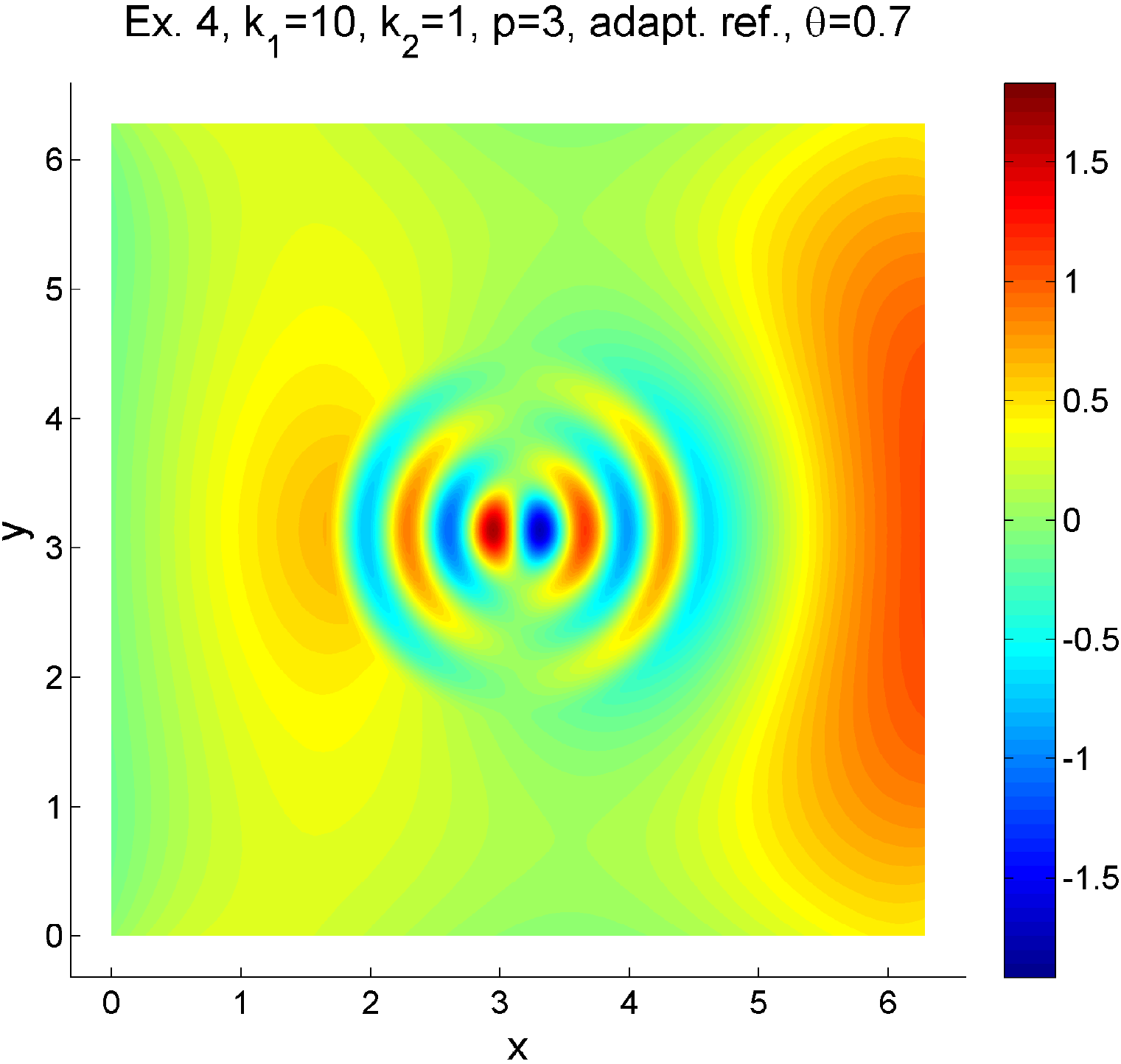}}
  \caption[Mesh and DGFEM solution, Ex. 4.1]{Adaptively refined mesh
    with $\theta=0.7$ and real part of the DGFEM solution on this mesh
    for Example 4 with $k_{1}=10$, $k_{2}=1$, and the boundary data
    $g_{1}$.}
  \label{fig:fig11}
\end{figure}

\begin{figure}
  \subfloat[Mesh after 29 refinements, 5892 elements, $\min_e h_e =
  0.034$] {\includegraphics[width=0.45\textwidth]{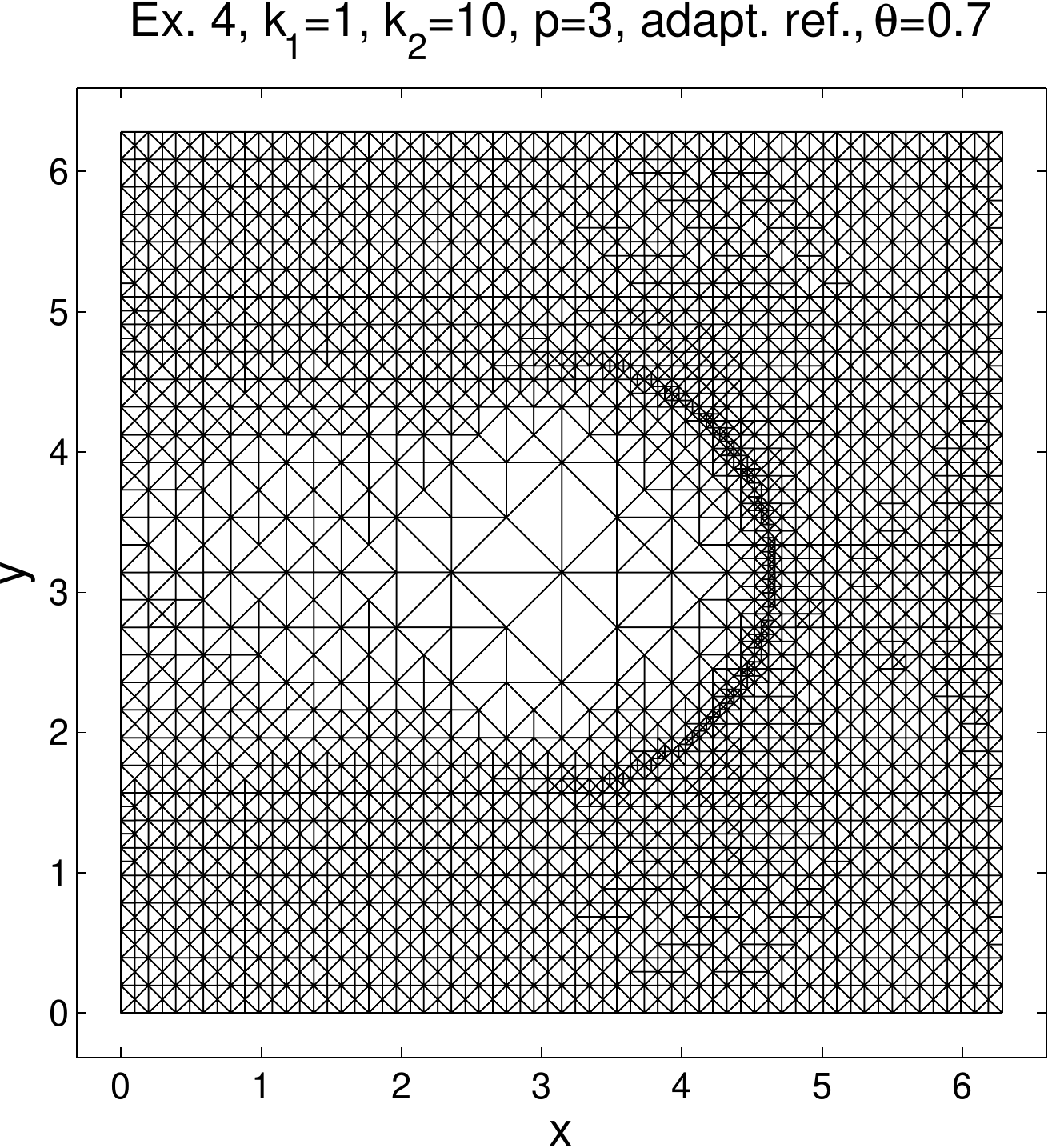}} \hfill
  \subfloat[$\Re (u_{\mathcal{T}}
  )$]{\includegraphics[width=0.521\textwidth]{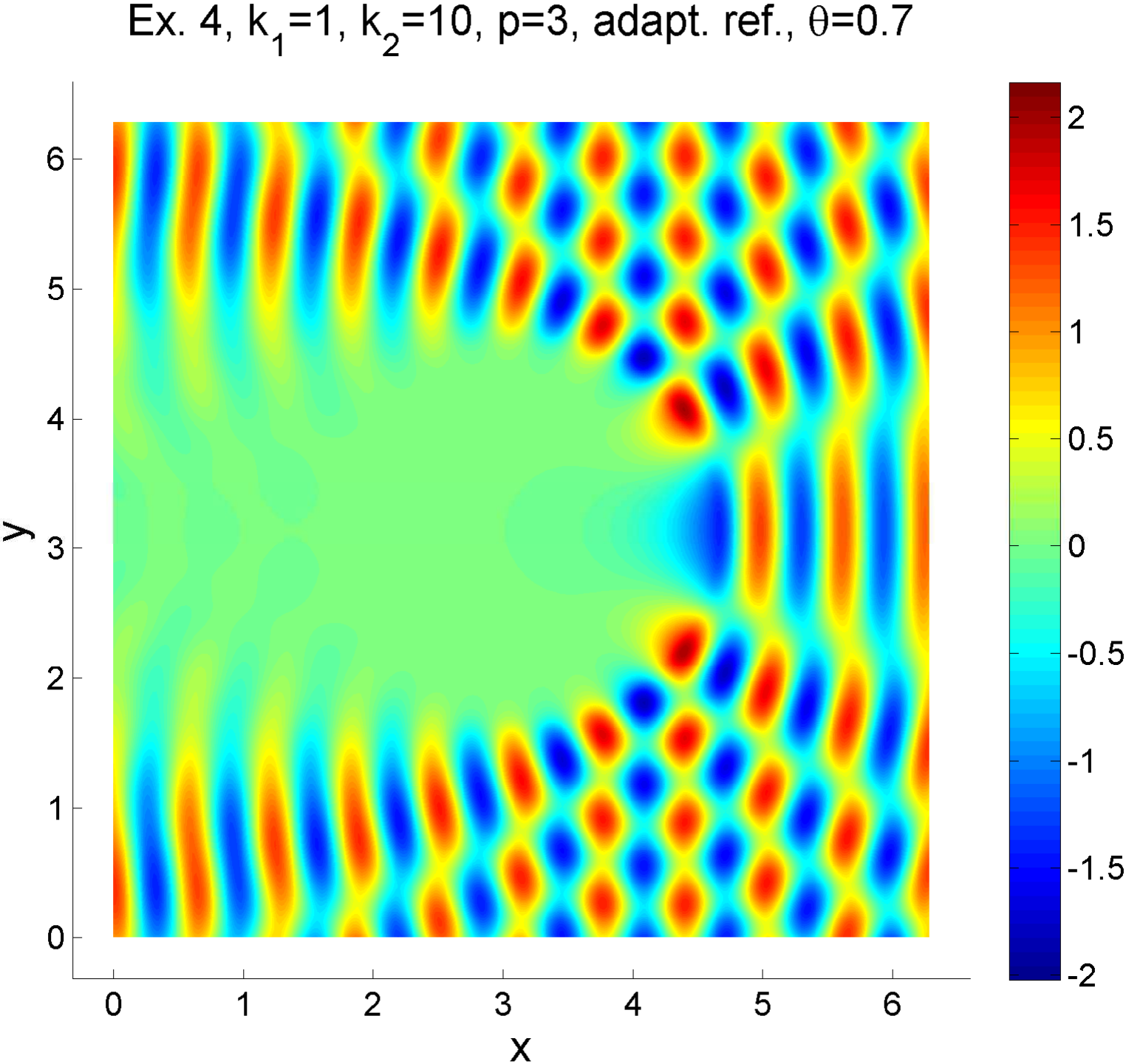}}
  \caption[Mesh and DGFEM solution, Ex. 4.2]{Adaptively refined mesh
    with $\theta=0.7$ and real part of the DGFEM solution on this mesh
    for Example 4 with $k_{1}=1$, $k_{2}=10$, and the boundary data
    $g_{2}$.}
  \label{fig:fig12}
\end{figure}

\begin{figure}[t]
  \subfloat[Boundary data $g_1$, $k_1=10$, $k_2=1$]
  {\includegraphics[width=0.49\textwidth]{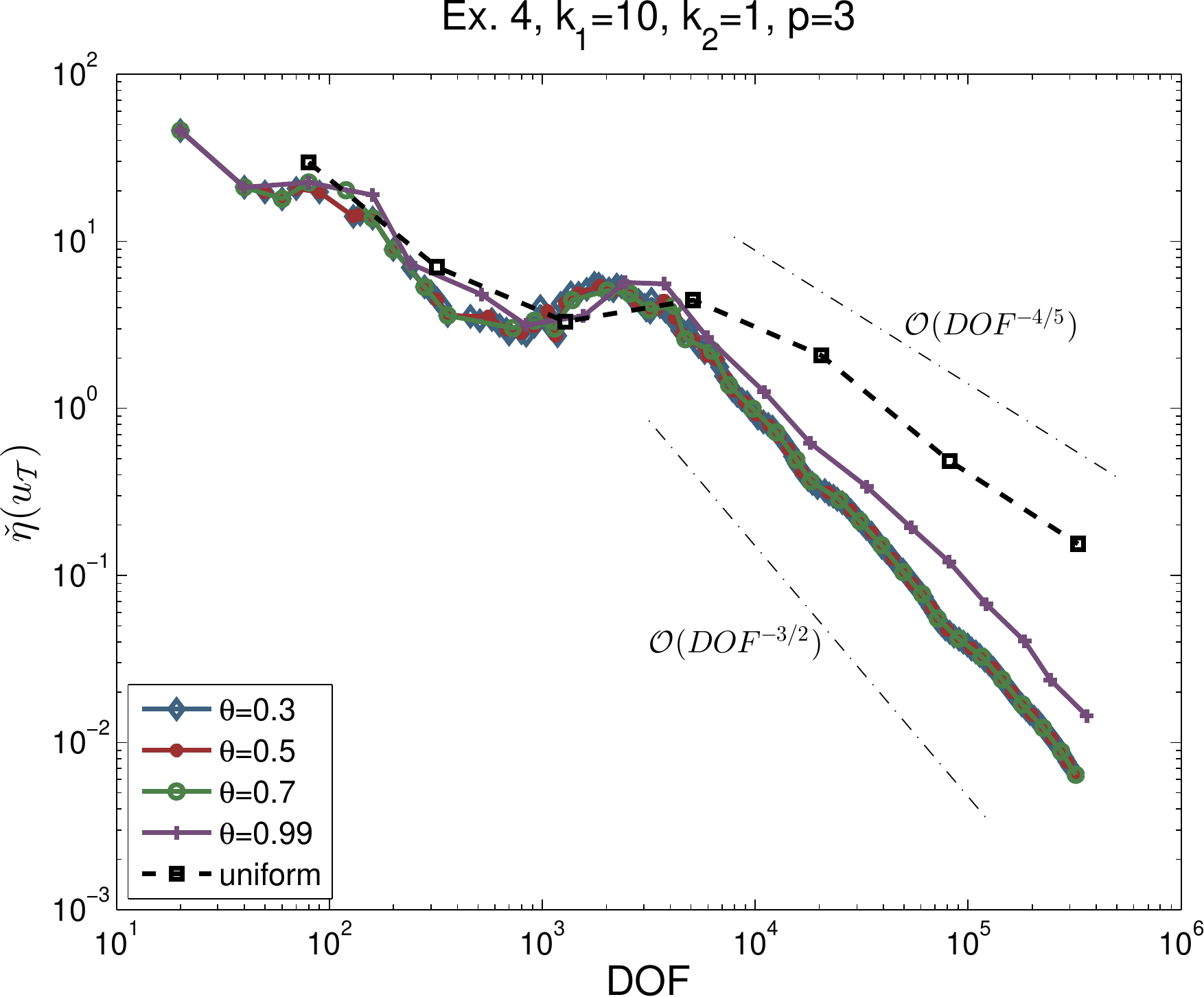}} \hfill
  \subfloat[Boundary data $g_2$, $k_1=1$, $k_2=10$]
  {\includegraphics[width=0.49\textwidth]{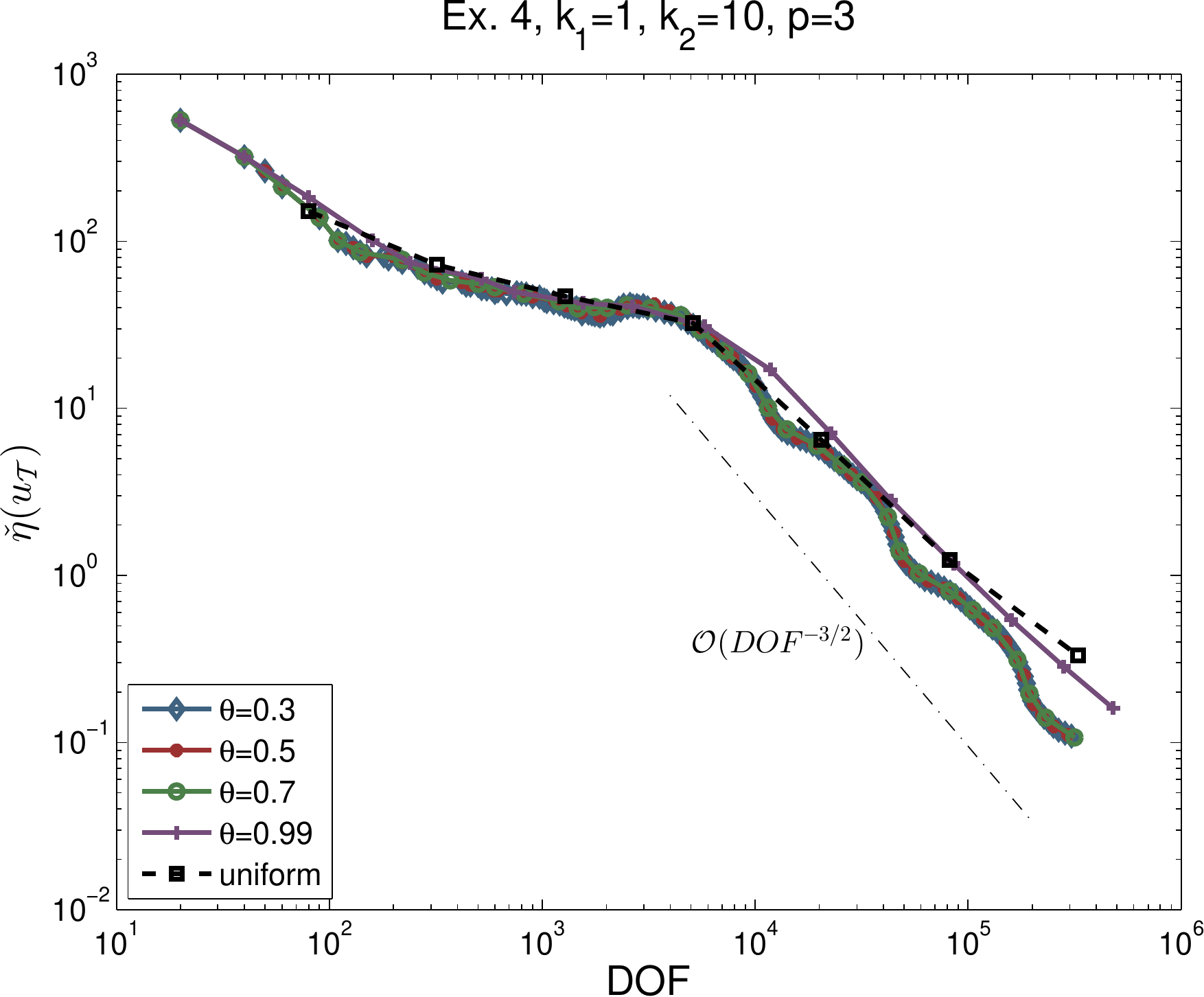}}
  \caption[Estimated convergence, Ex. 4] {The convergence of the error
    estimator $\check{\eta }(u_{\mathcal{T}} )$ in Example 4 for two
    non-constant functions $k(x,y)$ and the boundary data $g_{1}$,
    $g_{2}$, respectively.}
  \label{fig:fig15}
\end{figure}

These examples indicate that the adaptive algorithm, applied with the
error estimator $\check{\eta}_{K}$, properly accomplishes the task of
refining the mesh according to the properties of the solution:
Singularities and wave characteristics are recognized by the
estimator, and we observed optimal convergence rates.

\section{Conclusion and Outlook}

In this paper we have derived an a posteriori error estimator for an
$hp$-$\operatorname*{dG}$ method for highly indefinite Helmholtz
problems. In contrast to the discretization of the standard
variational formulation of the Helmholtz problem, the chosen
$hp$-$\operatorname*{dG}$ discretization always has a unique solution
(cf. Remark \ref{RemUniqueSolvability}). We have proved reliability
and efficiency estimates which are explicit in the discretization
parameters $h$, $p$, and the wave number $k$. Note that the adjoint
approximation property $\sigma_{k}^{\ast}\left( S_{\mathcal{T}}%
  ^{\mathfrak{p}}\right) $ enters the reliability estimate. In
\cite[Thm.  2.4.2]{ZechMaster} and \cite{MelenkSauterMathComp, MPS13}
it has been proved that for convex polygonal domains the conditions%
\begin{equation}
  p\geq C_{0}\log k\quad\text{and\quad}\forall K\in\mathcal{T}:\quad\frac
  {kh_{K}}{p_{K}}\leq C_{1}\label{rescond2}%
\end{equation}
imply $\sigma_{k}^{\ast}\left( S_{\mathcal{T}}^{\mathfrak{p}}\right)
\leq C_{2}$. We expect that general polygonal domains can be handled
by a) generalizing the \textquotedblleft decomposition
lemma\textquotedblright%
\ \cite[Theorem 4.10]{mm_stas_helm2} to a weighted $H^{2}\left(
  \Omega\right) $-regularity estimate for the non-analytic part of the
adjoint solution and b) performing an appropriate mesh grading towards
reentrant corners as is well known for elliptic boundary value
problem. Then, the error estimate for the non-analytic part can be
derived in a similar fashion as the estimate of $\eta_{\mathcal{A}}$
in the proof of \cite[Proposition 5.6]{mm_stas_helm2}. Again, we
expect that the resolution condition (\ref{rescond2}) remains
unchanged while the constant $C_{1}$ then, possibly, depends on the
angles at the reentrant corners of the polygon. Whereas the rigorous
derivation of such estimates is a topic of future research, we point
out that the use of adaptive methods is already justified in our model
setting of convex polygons, since higher polynomial degrees require
graded meshes also at convex corners in order to preserve optimal
convergence rates.

Our analysis is not sharp enough to give precise bounds for the
constants $C_{0}$, $C_{1}$, $C_{2}$. The numerical experiments show
that these estimates are \textit{qualitatively} sharp, i.e., if the
polynomial degree stays fixed independent of $k$, the error estimator
significantly overestimates the error while a mild, logarithmic
increase depending on $k$ cures this problem. It would be also
interesting to estimate the size of $\sigma_{k}^{\ast}\left(
  S_{\mathcal{T}}^{\mathfrak{p}}\right) $ by numerical
experiments. However, this task is far from being trivial because the
adjoint approximation property is defined as an infinite-dimensional
$\sup$-$\inf$ problem, and the dependence on the regularity of the
domain, step size $h$, polynomial degree $p$, wave number $k$ requires
extensive numerical tests which would increase the length of the paper
substantially. We are planning to investigate this question as a topic
of further research. Our numerical examples indicate that, as soon as
the resolution condition is satisfied with constants $C_{0}\sim
C_{1}=\mathcal{O}\left( 1\right) $, the a posteriori error estimator
becomes quite sharp.

Another interesting question is related to the mesh grading towards
the corners of the polygonal domain.  The results in
\cite{mm_stas_helm2} imply that if the initial, coarsest mesh and
polynomial degrees are chosen according to (\ref{rescond2}) and
\cite[Assumption 5.4]{mm_stas_helm2} then, $\sigma_{k}^{\ast}\left(
  S_{\mathcal{T}}^{\mathfrak{p}}\right) $ stays bounded by a constant
during the whole adaptive process and the geometric grading may not to
be incorporated in the adaptive refinement procedure.
Our numerical experiments show that after some refinements (as soon as
the resolution condition is satisfied) the convergence rate of the
adaptive solution becomes optimal and, in addition, the error
estimator nicely reflects the size and decay of the error. This
behaviour of the estimator, which is supported by our analysis only in
case $\sigma_{k}^{\ast}\left( S_{\mathcal{T}}^{\mathfrak{p}}\right) $
that is moderate, suggests that the adaptive algorithm achieves an
appropriate mesh grading on its own.

\appendix

\section{Approximation Properties\label{AApprox}}

\subsection{$C^{1}$-$hp$ Interpolant}

For residual-type a posteriori error estimation, typically, the subtle choice
of an interpolation operator for the approximation of the error along
$hp$-explicit error estimates plays an essential role. For our
\textit{non-conforming }$\operatorname{dG}$-formulation it turns out that a
$C^{1}$-interpolation operator has favorable properties, namely, the internal
jumps vanish while the approximation estimates are preserved. In
\cite{MelenkWohlmuth} a $C^{0}$-$hp$-Cl\'{e}ment-type interpolation operator
is constructed and $hp$-explicit error estimates are derived for
$W^{1,q}\left(  \Omega\right)  $ functions. In contrast, our estimate for the
$C^{1}$-hp Cl\'{e}ment interpolation operator allows for higher-order
convergence estimates for smoother functions as well as for estimates in norms
which are stronger than the $H^{1}$-norm. The proof follows the ideas in
\cite[Thm. 2.1]{MelenkWohlmuth} and employs a $C^{1}$-partition of unity by
the quintic Argyris finite element.

The construction is in two steps. First local (discontinuous) approximations
are constructed on local triangle patches. By multiplying with a $C^{1}%
$-partition of unity the resulting approximation is in $C^{1}\left(
\Omega\right)  $, while the approximation properties are preserved.

The first step is described by the following theorem. Its proof can be found
in \cite[Thm. 5.1]{melenk_nshpinterpolation_article} which is a generalization
of the one-dimensional construction (see, e.g., \cite[Chap. 7, eq.
(2.8)]{DEVore-Lorentz-93}).

\begin{theorem}
Let $d\in\mathbb{N}$ and $I:=%
\BIGOP{\times}%
_{j=1}^{d}I_{j}$ with $I_{i}$ being a bounded interval for every $i\in\left\{
1,\ldots,d\right\}  $. Let $n\in\mathbb{N}$. Then, for any $p\in\mathbb{N}$
with $p\geq n-1$, there exists a bounded linear operator $J_{n,p}:L^{1}\left(
I\right)  \rightarrow\bigotimes_{j=1}^{d}\mathbb{P}_{p}\left(  I_{j}\right)  $
with the following properties: For each $q\in\left[  1,\infty\right]  $, there
exists a constant $C>0$ depending only on $n$, $q$, and $I$ such that for all
$0\leq n\leq N$%
\begin{align*}
J_{n,p}u  &  =u\qquad\forall u\in%
{\displaystyle\bigotimes\limits_{j=1}^{d}}
\mathbb{P}_{n-1}\left(  I_{j}\right) \\
\left\Vert u-J_{n,p}\right\Vert _{W^{\ell,q}\left(  I\right)  }  &  \leq
C\left(  N+1\right)  ^{-\left(  r-\ell\right)  }\left\vert u\right\vert
_{W^{r,q}\left(  I\right)  },\qquad0\leq\ell\leq r\leq n.
\end{align*}
\end{theorem}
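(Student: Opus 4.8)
The plan is to reduce the multivariate statement to a one-dimensional construction on a single interval and then to tensorize. First I would fix the one-dimensional building block: on a bounded interval $J$ there is a bounded linear operator $\Pi_{n,p}\colon L^1(J)\to\mathbb{P}_p(J)$ which reproduces $\mathbb{P}_{n-1}(J)$ and satisfies the Jackson-type simultaneous-approximation estimate $\|u-\Pi_{n,p}u\|_{W^{\ell,q}(J)}\le C(p+1)^{-(r-\ell)}|u|_{W^{r,q}(J)}$ for $0\le\ell\le r\le n$ and all $q\in[1,\infty]$. This is exactly the content of the cited one-dimensional result: concretely one expands $u$ in an orthogonal polynomial basis adapted to $J$ and takes a suitable averaged truncation (a de la Vallée Poussin-type mean) whose Lebesgue constants stay bounded, so that the operator is uniformly bounded on every $L^q$ and reproduces low-degree polynomials, after which the derivative estimates follow from the classical Jackson inequalities for the simultaneous approximation of a function and its derivatives. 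By an affine change of variables this transfers to each factor $I_j$, with the scaling constants absorbed into the $I$-dependence that the statement already allows.

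Second, I would define the multivariate operator as the composition of the one-dimensional operators acting coordinatewise. Writing $J^{(j)}$ for $\Pi_{n,p}$ applied in the $j$-th variable (and the identity in the others), I set $J_{n,p}:=J^{(1)}\circ\cdots\circ J^{(d)}$. Because each $J^{(j)}$ is bounded on $L^1$ and acts on a single variable, the composition is a well-defined bounded linear operator $L^1(I)\to\bigotimes_{j=1}^d\mathbb{P}_p(I_j)$, and the factors commute. Polynomial exactness is then immediate: if $u\in\bigotimes_j\mathbb{P}_{n-1}(I_j)$, applying the reproduction property of $\Pi_{n,p}$ in each variable in turn shows that every $J^{(j)}$ fixes $u$, whence $J_{n,p}u=u$.

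Third, for the approximation estimate I would use the telescoping identity
\[
I-J^{(1)}\cdots J^{(d)}=\sum_{j=1}^{d}J^{(1)}\cdots J^{(j-1)}\,(I-J^{(j)}).
\]
In the $j$-th summand the factor $I-J^{(j)}$ contributes, via the one-dimensional estimate applied in direction $j$ and integrated over the remaining variables, a gain of $(p+1)^{-(r-\ell)}$ together with the $r$-th partial derivative of $u$ in the $j$-th coordinate, while the preceding operators $J^{(1)},\dots,J^{(j-1)}$ are controlled by their uniform $W^{\ell,q}$-boundedness in their respective directions (using that differentiation commutes with these directional operators). Summing over $j$ and bounding each directional $r$-th derivative by the full seminorm $|u|_{W^{r,q}(I)}$ yields $\|u-J_{n,p}u\|_{W^{\ell,q}(I)}\le C(p+1)^{-(r-\ell)}|u|_{W^{r,q}(I)}$, with $C$ depending only on $n$, $q$, and $I$.

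The main obstacle is the one-dimensional construction in the first step: obtaining a single operator that is simultaneously $L^1$-bounded, exact on $\mathbb{P}_{n-1}$, and rate-optimal in every $W^{\ell,q}$-norm for all $q\in[1,\infty]$ is delicate, since the naive $L^2$-orthogonal projection fails to be uniformly bounded for $q\notin\{2\}$. This is precisely where one must invoke the careful near-best-approximation device of DeVore--Lorentz and Melenk rather than a generic projection. A secondary technical point is the passage, in the telescoping step, from directional derivative bounds to the isotropic seminorm on the right-hand side, which requires keeping the anisotropic structure of the estimate intact as it is pushed through the commuting one-dimensional operators.
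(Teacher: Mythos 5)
Your proposal is correct and follows essentially the same route as the source: the paper itself gives no proof of this theorem but refers to \cite[Thm.~5.1]{melenk_nshpinterpolation_article}, which constructs $J_{n,p}$ exactly as you describe, namely by tensorizing the univariate operator of \cite[Chap.~7, eq.~(2.8)]{DEVore-Lorentz-93} coordinatewise and controlling the error through the telescoping identity together with the $W^{\ell,q}$-stability of the univariate factors (which, as you note, follows from the $r=\ell$ case of the one-dimensional estimate). The two obstacles you single out --- the delicacy of a single $L^{1}$-bounded, polynomial-reproducing, simultaneously rate-optimal univariate operator, and the anisotropic bookkeeping when pushing derivatives through the commuting directional operators --- are precisely the points the cited construction is designed to handle, so deferring to it there is consistent with what the paper does.
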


The proof of the following theorem is a generalization of the proof of
\cite[Thm. 2.1]{MelenkWohlmuth} and is carried out in detail in \cite[Thm.
3.1.10]{ZechMaster}. Here we skip it for brevity.

\begin{theorem}
[Cl\'{e}ment type quasi-interpolation]\label{TheoClQI}Let $\mathcal{T}$ be a
$\rho_{\mathcal{T}}$-shape regular, conforming simplicial finite element mesh
for the polygonal Lipschitz domain $\Omega\subseteq\mathbb{R}^{2}$. Let $\mathfrak{p}$
be a polynomial degree function for $\mathcal{T}$ satisfying (\ref{defrho2}).
Assume that $q\in\left[  1,\infty\right]  $ and let $n\in\mathbb{N}$.
\begin{subequations}
\label{C1Clement}
\end{subequations}%
\begin{enumerate}
\item[a.] Assume that $\left\lfloor \left(  p_{\mathcal{T}}-5\right)
/2\right\rfloor \geq n-1$. Then, there exists a bounded linear operator
$I_{n}^{\operatorname{hp}}:W^{n,q}\left(  \Omega\right)  \rightarrow
S_{\mathcal{T}}^{\mathfrak{p}}\cap C^{1}\left(  \Omega\right)  $ such that for
every $K\in\mathcal{T}$%
\begin{equation}
\left\vert u-I_{n}^{\operatorname{hp}}u\right\vert _{W^{m,q}\left(  K\right)
}\leq C\left(  \frac{h_{K}}{p_{K}}\right)  ^{n-m}\left\vert u\right\vert
_{W^{n,q}\left(  \omega_{K}\right)  } \qquad\forall m\in\left\{  0,\ldots,n\right\}  ,\tag{%
\ref{C1Clement}%
a}\label{C1Clementa}%
\end{equation}
and for every $e\in\mathcal{E}\left(  K\right)  $ and multiindex $\vartheta \in \mathbb{N}_0^2$ with $\vartheta_1+\vartheta_2=m\le n-1$
\begin{equation}
\left\Vert \frac{\partial^{m}}{\partial x^{\vartheta_1} \partial y^{\vartheta_2}} \left(  \left.  \left(  u-I_{n}^{\operatorname{hp}}u\right)
\right\vert _{K}\right)  \right\Vert _{L^{q}\left(  e\right)  }\leq C\left(
\frac{h_{e}}{p_{e}}\right)  ^{n-m-1/q}\left\vert u\right\vert _{W^{n,q}\left(
\omega_{e}\right)  },\tag{%
\ref{C1Clement}%
b}\label{C1Clementb}%
\end{equation}
where $C>0$ only depends on $n$, $q$, $\rho_{\mathcal{T}}$, and $\Omega$.
\item[b.] Assume that $\left\lfloor \left(  p_{\mathcal{T}}-1\right)
/2\right\rfloor \geq n-1$. Then, there exists a bounded linear operator
$I_{n}^{\operatorname{hp},0}:W^{n,q}\left(  \Omega\right)  \rightarrow
S_{\mathcal{T}}^{\mathfrak{p}}\cap C^{0}\left(  \Omega\right)  $ such that
(\ref{C1Clement}) holds with $I_{n}^{\operatorname{hp}}u$ replaced by
$I_{n}^{\operatorname{hp},0}u$ for a constant $C>0$ solely depending on $n$,
$q$, $\rho_{\mathcal{T}}$, and $\Omega$.
\end{enumerate}
\end{theorem}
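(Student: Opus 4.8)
The plan is to construct $I_{n}^{\operatorname{hp}}$ by the classical Cl\'ement/partition-of-unity device, in the two steps anticipated before the statement: first a family of \emph{local} polynomial approximants on the vertex patches, then a gluing by a $C^{1}$ partition of unity supplied by the quintic Argyris element. For each vertex $V$ of $\mathcal{T}$ I let $\omega_{V}$ be the union of the elements containing $V$ and set $p_{V}:=\lfloor(\min\{p_{K'}:V\in K'\}-5)/2\rfloor$, so that the hypothesis together with the degree comparability (\ref{defrho2}) gives $p_{V}\geq n-1$ and $p_{V}\simeq p_{K}$ for $V\in K$. Using a bounded extension operator $W^{n,q}(\omega_{V})\rightarrow W^{n,q}(Q_{V})$ into an axiparallel bounding box $Q_{V}\supseteq\omega_{V}$ (its norm controlled by $\rho_{\mathcal{T}}$, there being only finitely many patch shapes up to affine equivalence), I apply the tensor-product operator $J_{n,p_{V}}$ of the preceding theorem and restrict to $\omega_{V}$. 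This yields $\pi_{V}u\in\mathbb{P}_{2p_{V}}(\omega_{V})$ which reproduces $\mathbb{P}_{n-1}$, so a Deny--Lions/Bramble--Hilbert argument on the reference configuration gives
\[
\left|u-\pi_{V}u\right|_{W^{m,q}(\omega_{V})}\leq C\left(h_{V}/p_{V}\right)^{n-m}\left|u\right|_{W^{n,q}(\omega_{V})},\qquad 0\leq m\leq n,
\]
with $h_{V}=\operatorname{diam}\omega_{V}$.

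Next I let $\{\varphi_{V}\}$ be the $C^{1}$ partition of unity associated with the Argyris element: each $\varphi_{V}$ is globally $C^{1}$, piecewise of total degree $\leq 5$, supported in $\omega_{V}$, with $\sum_{V}\varphi_{V}\equiv 1$ and $\|D^{\alpha}\varphi_{V}\|_{L^{\infty}(K)}\leq Ch_{K}^{-|\alpha|}$, and set $I_{n}^{\operatorname{hp}}u:=\sum_{V}\varphi_{V}\,\pi_{V}u$. On every $K$ each summand is a polynomial of total degree $\leq 2p_{V}+5\leq\min\{p_{K'}:V\in K'\}\leq p_{K}$, which is exactly where the hypothesis $\lfloor(p_{\mathcal{T}}-5)/2\rfloor\geq n-1$ is consumed; hence $I_{n}^{\operatorname{hp}}u\in S_{\mathcal{T}}^{\mathfrak{p}}$, and the $C^{1}$ regularity of the $\varphi_{V}$ forces $I_{n}^{\operatorname{hp}}u\in C^{1}(\Omega)$ -- which is the whole point, since it makes the interior jumps of $I_{n}^{\operatorname{hp}}u$ and $\nabla I_{n}^{\operatorname{hp}}u$ vanish, as used in Lemma \ref{LemReliability} and Lemma \ref{Lemabsorb}. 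For the interior bound (\ref{C1Clementa}) I exploit $\sum_{V}\varphi_{V}=1$ to write, on each $K$, $u-I_{n}^{\operatorname{hp}}u=\sum_{V\in K}\varphi_{V}(u-\pi_{V}u)$, a sum of boundedly many terms by shape regularity; expanding $D^{\beta}(\varphi_{V}(u-\pi_{V}u))$ by Leibniz, bounding the factors $\|D^{\alpha}\varphi_{V}\|_{\infty}\leq Ch_{K}^{-|\alpha|}$ and inserting the local estimates term by term reproduces $(h_{K}/p_{K})^{n-m}|u|_{W^{n,q}(\omega_{K})}$ (the derivatives that fall on $\varphi_{V}$ only improve the power of $p$), using $h_{V}\simeq h_{K}$, $p_{V}\simeq p_{K}$.

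For the edge estimate (\ref{C1Clementb}) I would pass from $K$ to an edge $e$ by a scaled trace inequality applied to $w:=\partial_{x}^{\vartheta_{1}}\partial_{y}^{\vartheta_{2}}(u-I_{n}^{\operatorname{hp}}u)|_{K}$ with $|\vartheta|=m\leq n-1$, and then invoke the interior bounds at orders $m$ and $m+1$. The delicate point, and the one I expect to be the genuine obstacle, is that the naive trace inequality
\[
\|w\|_{L^{q}(e)}\lesssim h_{K}^{-1/q}\|w\|_{L^{q}(K)}+h_{K}^{1-1/q}\left|w\right|_{W^{1,q}(K)}
\]
loses a factor $p^{1-1/q}$ relative to the asserted rate $(h_{e}/p_{e})^{n-m-1/q}$; recovering the optimal power of $p$ requires a $p$-explicit trace estimate, which I would obtain by splitting $u-I_{n}^{\operatorname{hp}}u$ into a polynomial part, to which the $hp$-explicit polynomial trace/inverse inequalities of \cite[Thm. 4.76]{SchwabhpBook} apply, and a higher-order remainder handled by the interior estimates. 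This is the step that must be executed with care, following \cite[Thm. 2.1]{MelenkWohlmuth} and \cite[Thm. 5.1]{melenk_nshpinterpolation_article}.

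Finally, part b is obtained by the \emph{same} construction with the $C^{1}$ Argyris partition of unity replaced by the standard $C^{0}$ piecewise-linear hat functions, of total degree $1$. The degree budget then reads $2p_{V}+1\leq p_{K}$, which is precisely the relaxed hypothesis $\lfloor(p_{\mathcal{T}}-1)/2\rfloor\geq n-1$, and every estimate above carries over verbatim, now yielding only $I_{n}^{\operatorname{hp},0}u\in C^{0}(\Omega)$.
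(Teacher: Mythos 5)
Your proposal follows essentially the same route as the paper, which itself omits the proof and defers to \cite[Thm.~3.1.10]{ZechMaster} and \cite[Thm.~2.1]{MelenkWohlmuth} after sketching precisely your two-step construction: local patch approximants obtained from the tensor-product operator $J_{n,p}$ on bounding boxes, glued by the quintic Argyris $C^{1}$ partition of unity (hat functions for part b), with the same degree bookkeeping $2p_{V}+5\leq p_{K}$, resp.\ $2p_{V}+1\leq p_{K}$. The one step you flag as the genuine obstacle, the optimal $p$-power in the edge bound (\ref{C1Clementb}), is handled in those references not by a polynomial/remainder splitting but simply by the multiplicative trace inequality $\left\Vert w\right\Vert _{L^{q}\left(  e\right)  }^{q}\leq C\left(  h_{K}^{-1}\left\Vert w\right\Vert _{L^{q}\left(  K\right)  }^{q}+\left\Vert w\right\Vert _{L^{q}\left(  K\right)  }^{q-1}\left\vert w\right\vert _{W^{1,q}\left(  K\right)  }\right)$, which combined with your interior estimates at orders $m$ and $m+1$ yields exactly $\left(  h_{e}/p_{e}\right)  ^{n-m-1/q}$ without any loss.
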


\subsection{Conforming Approximation}

The a posteriori error analysis for our non-conforming
$\operatorname{dG}%
$-formulation requires the construction of conforming approximants of
non-conforming $hp$-finite element functions and this will be provided
next.

\begin{theorem}
[Conforming approximant]\label{TheoConfApprox}Let $\mathcal{T}$ be a
$\rho_{\mathcal{T}}$-shape regular, conforming simplicial finite
element mesh of the polygonal domain
$\Omega\subseteq\mathbb{R}^2$. Let $v\in
S_{\mathcal{T}}^{\mathfrak{p}}$, and let $\mathfrak{p}$ be a
polynomial degree function satisfying (\ref{defrho2}) and
$p_{\mathcal{T}}\geq1$. Then, there exists a constant $C>0$ which only
depends on the shape regularity and a function $v^{\ast}\in
S_{\mathcal{T}%
}^{\mathfrak{p}}\cap C^{0}\left( \Omega\right) $ such that%
\begin{subequations}
  \label{TheoNCE}
\end{subequations}%
\begin{align}
  \left\Vert v-v^{\ast}\right\Vert _{\partial \Omega } & \leq
  C\left\Vert
    \llbr
    v%
    \rrbr
  \right\Vert _{\mathfrak{S}^{I}},\tag{%
    \ref{TheoNCE}%
    a}\label{TheoNCEa}\\
  \left\Vert v-v^{\ast}\right\Vert & \leq C\left\Vert
    \mathfrak{h}^{1/2}
    \llbr
    v%
    \rrbr
  \right\Vert _{ \mathfrak{S}^{I} },\tag{%
    \ref{TheoNCE}%
    b}\label{TheoNCEb}\\
  \left\Vert \nabla\left( v-v^{\ast}\right) \right\Vert & \leq
  C\left\Vert \frac{\mathfrak{p}}{\mathfrak{h}^{1/2}}
    \llbr
    v%
    \rrbr
  \right\Vert _{ \mathfrak{S}^{I} }.\tag{%
    \ref{TheoNCE}%
    c}\label{TheoNCEc}%
\end{align}
\end{theorem}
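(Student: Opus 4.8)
The plan is to construct $v^{\ast}$ as an \emph{averaging (Oswald-type) operator} applied to $v$, using a Lagrange-type nodal basis of the conforming space $S_{\mathcal{T}}^{\mathfrak{p}}\cap C^{0}(\Omega)$ whose degrees of freedom sit at the vertices, in the relative interior of the edges, and in the interior of each element. I would define $v^{\ast}$ by setting it equal to $v$ at every element-interior node (which belongs to a single triangle), and equal to the arithmetic mean of the one-sided values $v|_{K'}$ over all elements $K'$ meeting at the node for every node lying on the skeleton (a vertex or an interior-edge node shared by several elements). The averaged values are single-valued at shared nodes, so $v^{\ast}\in C^{0}(\Omega)$; and by construction $w:=v-v^{\ast}$ vanishes at every element-interior node while its value at each shared node is a linear combination of one-sided differences of $v$, i.e.\ of the edge jumps $\llbr v\rrbr$ across the adjacent interior edges.

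First I would localize. On a fixed element $K$ the restriction $w|_{K}$ is a polynomial of degree $p_{K}$ whose nodal values vanish except at nodes on $\partial K$, and those boundary values are point values of the edge jumps $\llbr v\rrbr|_{e}$ for $e\subset\partial^{I}K$. The core of the argument is a set of \emph{$p$-explicit} polynomial estimates on the reference triangle $\widehat{K}$: for the polynomial $\hat w$ determined solely by nodal data supported on the edges, one bounds $\|\hat w\|_{L^{2}(\widehat{K})}$ and $\|\nabla\hat w\|_{L^{2}(\widehat{K})}$ by the $L^{2}(\hat e)$-norms of the edge traces $\hat q_{\hat e}$, with the sharp powers of $p$, namely $\|\hat w\|_{L^{2}(\widehat{K})}\le C\sum_{\hat e}\|\hat q_{\hat e}\|_{L^{2}(\hat e)}$ and $\|\nabla\hat w\|_{L^{2}(\widehat{K})}\le C\,p\sum_{\hat e}\|\hat q_{\hat e}\|_{L^{2}(\hat e)}$. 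Pulling back via the affine map $F_{K}$ and using shape regularity (\ref{defrho}) to control the Jacobian yields, for each $K$,
\[
\|w\|_{L^{2}(K)}\le C\,h_{K}^{1/2}\sum_{e\subset\partial^{I}K}\|\llbr v\rrbr\|_{L^{2}(e)},\qquad \|\nabla w\|_{L^{2}(K)}\le C\,\frac{p_{K}}{h_{K}^{1/2}}\sum_{e\subset\partial^{I}K}\|\llbr v\rrbr\|_{L^{2}(e)}.
\]

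Next I would sum over all $K\in\mathcal{T}$, apply a local Cauchy--Schwarz inequality (each triangle has at most three edges), use that each interior edge is shared by exactly two triangles, and invoke the comparability (\ref{defrho2}) of neighbouring polynomial degrees together with shape regularity ($h_{K}\sim h_{e}$, $p_{K}\sim p_{e}$) to convert the element quantities into the edge weights $h_{e},p_{e}$ up to the constant. This produces (\ref{TheoNCEb}) and (\ref{TheoNCEc}). For the boundary estimate (\ref{TheoNCEa}) I would restrict $w$ to a boundary edge $e\subset\partial\Omega$: its edge-interior nodal values vanish, so $w|_{e}$ is driven only by the two endpoint (vertex) values, which are controlled by the jumps on the interior edges meeting those vertices; a one-dimensional nodal/trace estimate then gives $\|w\|_{L^{2}(e)}\le C\|\llbr v\rrbr\|_{\mathfrak{S}^{I}}$ with the $h$- and $p$-weights cancelling.

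The main obstacle I anticipate is the sharp $p$-dependence, specifically obtaining the \emph{single} power $\mathfrak{p}$ in (\ref{TheoNCEc}) rather than the $\mathfrak{p}^{2}$ that a naive combination of a polynomial inverse inequality with the $L^{2}$-bound would produce. This forces a careful choice of the nodal set (e.g.\ a Gauss--Lobatto type distribution) and genuinely $p$-explicit estimates for the edge-to-element extension on $\widehat{K}$, rather than black-box inverse inequalities; these are precisely the $hp$-interpolation ingredients established in the cited machinery (cf.\ Theorem \ref{TheoClQI} and \cite[Thm. 3.1.10]{ZechMaster}). Once these reference-element polynomial estimates are in hand, the remaining steps — localization, affine scaling, and summation over the skeleton — are routine.
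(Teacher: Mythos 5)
Your proposal matches the paper's own treatment: the paper does not prove Theorem \ref{TheoConfApprox} in the text but refers to \cite[Thm.~3.2.7]{ZechMaster} and to \cite{HoustonSchoetzWihl2007, Bonito2010}, where precisely this Oswald-type nodal-averaging construction (average one-sided values at shared nodes, keep interior nodes, then bound $v-v^{\ast}$ elementwise by $p$-explicit reference-element estimates in terms of the edge jumps) is carried out. You also correctly identify the one genuinely delicate point, namely that the single power of $\mathfrak{p}$ in (\ref{TheoNCEc}) cannot come from a naive inverse inequality applied to (\ref{TheoNCEb}) and requires the sharp edge-to-element polynomial extension estimates of the cited references.
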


For the proof of this theorem we refer to
\cite[Thm. 3.2.7]{ZechMaster} (see also, e.g.,
\cite{HoustonSchoetzWihl2007, Bonito2010}). \begin{corollary}
  [Conforming error]\label{CorConfApprox}Let the assumptions of
  Theorem \ref{TheoConfApprox} be satisfied. There exists a constant
  $C>0$ which only depends on the shape regularity constant
  $\rho_{\mathcal{T}}$ such that, for every $v\in
  S_{\mathcal{T}}^{p}$, there is a function $v^{\ast}\in
  S_{\mathcal{T}}^{\mathfrak{p}}\cap C^{0}\left( \Omega\right) $ with%
  \begin{align*}
    \left\Vert k\left( v-v^{\ast}\right) \right\Vert ^{2}+\left\Vert
      \nabla\left( v-v^{\ast}\right) \right\Vert ^{2}+&\left\Vert
      \sqrt{k}\left(
        v-v^{\ast}\right)  \right\Vert _{\partial\Omega}^{2}\\
    &\leq\frac{C}{\mathfrak{a}%
    }\left( 1+\frac{1}{p_{\mathcal{T}}}M_{\frac{\operatorname*{kh}}%
        {\operatorname*{p}}}+M_{\frac{\operatorname*{kh}}{\operatorname*{p}}}%
      ^{2}\right) \left\Vert
      \sqrt{{\mathfrak{a}\frac{\mathfrak{p}^2}{\mathfrak{h}}}}%
      \llbr
      v%
      \rrbr
    \right\Vert _{ \mathfrak{S}^{I} }^{2}.
  \end{align*}
\end{corollary}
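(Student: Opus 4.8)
The plan is to take $v^{*}\in S_{\mathcal{T}}^{\mathfrak{p}}\cap C^{0}(\Omega)$ to be \emph{exactly} the conforming approximant furnished by Theorem \ref{TheoConfApprox}, and then to bound each of the three terms on the left-hand side by the single quantity $J^{2}:=\bigl\Vert\frac{\mathfrak{p}}{\mathfrak{h}^{1/2}}\llbr v\rrbr\bigr\Vert_{\mathfrak{S}^{I}}^{2}$, carefully tracking the mesh weights. Abbreviating $M:=M_{\frac{\operatorname*{kh}}{\operatorname*{p}}}$, the guiding observation is that once the constant factor $k$ is pulled out of the global norms, each of the estimates (\ref{TheoNCEa}), (\ref{TheoNCEb}), (\ref{TheoNCEc}) leaves behind a power of the dimensionless combination $\frac{k\mathfrak{h}}{\mathfrak{p}}$, which is bounded pointwise on $\mathfrak{S}^{I}\subseteq\mathfrak{S}$ by $M$ (definition (\ref{defMkhp})). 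A useful preliminary reduction is the identity $\bigl\Vert\sqrt{\mathfrak{a}\frac{\mathfrak{p}^{2}}{\mathfrak{h}}}\llbr v\rrbr\bigr\Vert_{\mathfrak{S}^{I}}^{2}=\mathfrak{a}J^{2}$, so that the prefactor $\frac{C}{\mathfrak{a}}$ in the assertion exactly cancels the factor $\mathfrak{a}$; it therefore suffices to prove the left-hand side is bounded by $C\bigl(1+\frac{1}{p_{\mathcal{T}}}M+M^{2}\bigr)J^{2}$.

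First I would dispose of the gradient term, which is immediate: estimate (\ref{TheoNCEc}) gives $\Vert\nabla(v-v^{*})\Vert^{2}\le C\bigl\Vert\frac{\mathfrak{p}}{\mathfrak{h}^{1/2}}\llbr v\rrbr\bigr\Vert_{\mathfrak{S}^{I}}^{2}=CJ^{2}$, which accounts for the summand $1$ in the bracket. For the interior $k$-weighted term, constancy of $k$ and (\ref{TheoNCEb}) yield $\Vert k(v-v^{*})\Vert^{2}=k^{2}\Vert v-v^{*}\Vert^{2}\le Ck^{2}\int_{\mathfrak{S}^{I}}\mathfrak{h}\,|\llbr v\rrbr|^{2}$; rewriting $k^{2}\mathfrak{h}=\bigl(\frac{k\mathfrak{h}}{\mathfrak{p}}\bigr)^{2}\frac{\mathfrak{p}^{2}}{\mathfrak{h}}$ and using $\bigl(\frac{k\mathfrak{h}}{\mathfrak{p}}\bigr)^{2}\le M^{2}$ on $\mathfrak{S}^{I}$ produces the $M^{2}J^{2}$ contribution. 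Analogously, for the boundary term, (\ref{TheoNCEa}) gives $\Vert\sqrt{k}(v-v^{*})\Vert_{\partial\Omega}^{2}=k\Vert v-v^{*}\Vert_{\partial\Omega}^{2}\le Ck\int_{\mathfrak{S}^{I}}|\llbr v\rrbr|^{2}$, and the regrouping $k=\frac{k\mathfrak{h}}{\mathfrak{p}}\cdot\frac{1}{\mathfrak{p}}\cdot\frac{\mathfrak{p}^{2}}{\mathfrak{h}}$ with $\frac{k\mathfrak{h}}{\mathfrak{p}}\le M$ and $\frac{1}{\mathfrak{p}}\le\frac{1}{p_{\mathcal{T}}}$ yields the $\frac{1}{p_{\mathcal{T}}}M\,J^{2}$ contribution. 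Summing the three bounds and reinserting the factor $\mathfrak{a}$ gives the assertion with a constant $C$ depending only on $\rho_{\mathcal{T}}$ (through Theorem \ref{TheoConfApprox}).

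I do not expect a genuine obstacle: all the analytic content is packaged in Theorem \ref{TheoConfApprox}, and the remaining work is weight bookkeeping. The only step requiring slight care is the algebraic regrouping of the mesh weights so that in each term the combination $\frac{k\mathfrak{h}}{\mathfrak{p}}$ is isolated and replaced by $M$, leaving precisely the edge weight $\frac{\mathfrak{p}^{2}}{\mathfrak{h}}$ that reconstitutes $J^{2}$; this is exactly what assigns the three different powers ($1$, $M$, and $M^{2}$) to the gradient, boundary, and interior terms, respectively. Finally, should one wish to allow a genuinely variable $k$ rather than the constant $k$ of the error analysis, I would replace the global bound (\ref{TheoNCEb}) by its element-local analogue available from the proof of Theorem \ref{TheoConfApprox} and control $k$ via $\bigl\Vert\frac{\mathfrak{h}}{\mathfrak{p}}k\bigr\Vert_{L^{\infty}(\Omega)}\le M$; the bookkeeping is otherwise unchanged.
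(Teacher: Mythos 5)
Your proposal is correct and follows exactly the paper's (one-line) proof, which simply invokes the three estimates of Theorem \ref{TheoConfApprox}; you have merely made explicit the weight bookkeeping that assigns the factors $1$, $\frac{1}{p_{\mathcal{T}}}M_{\frac{\operatorname*{kh}}{\operatorname*{p}}}$, and $M_{\frac{\operatorname*{kh}}{\operatorname*{p}}}^{2}$ to the gradient, boundary, and interior terms, and that cancels $\mathfrak{a}$ against $\frac{C}{\mathfrak{a}}$. No gaps.
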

\begin{proof}%
  The estimate follows by (\ref{TheoNCE}).
  \qquad\end{proof}%

\bibliographystyle{abbrv}
\bibliography{nlailu}

\end{document}